\newtheorem{theorem}{Theorem}[section]
\newtheorem{lemma}[theorem]{Lemma}
\newtheorem{corollary}[theorem]{Corollary}
\newtheorem{problem}[theorem]{Problem}
\newtheorem{observation}[theorem]{Observation}
\theoremstyle{definition}
\newtheorem{definition}[theorem]{Definition}
\newtheorem{remark}[theorem]{Remark}
\begin{document}
\title{On kernels by rainbow paths in arc-coloured digraphs\thanks{Corresponding author.
$E$-$mail\ \ address:$ ruijuanli@sxu.edu.cn (R. Li). Research of RL is partially supported by NNSFC under no. 11401353 and TYAL of Shanxi. }}
\author{Ruijuan Li}
\author{Yanqin Cao}
\affil{School of Mathematical Sciences, Shanxi University, Taiyuan, Shanxi, 030006, PR China}

\maketitle
%\date{}

\begin{abstract}
In 2018, Bai, Fujita and Zhang (\emph{Discrete Math.} 2018, 341(6): 1523-1533) introduced the concept of a kernel by rainbow paths (for short, RP-kernel) of an arc-coloured digraph $D$, which is a subset $S$ of vertices of $D$ such that ($a$) there exists no rainbow path for any pair of distinct vertices of $S$, and ($b$) every vertex outside $S$ can reach $S$ by a rainbow path in $D$. They showed that it is NP-hard to recognize wether an arc-coloured digraph has a RP-kernel and it is NP-complete to decided wether an arc-coloured tournament has a RP-kernel.
In this paper, we give the sufficient conditions for the existence of a RP-kernel in arc-coloured unicyclic digraphs, semicomplete digraphs, quasi-transitive digraphs and bipartite tournaments, and prove that these arc-coloured digraphs have RP-kernels if certain ``short" cycles and certain ``small" induced subdigraphs are rainbow.
\end{abstract}

\noindent{\bf Keywords:} arc-coloured digraphs; kernels; kernels by rainbow paths
\vskip 3mm

\noindent{\bf MR(2010) Subject Classification:} 05C20, 05C12, 05C07 % MR(2000) Subject Classification

%%%%%%%%%%%%%%%%%%%%%%%%%%%%%%%%%%%%%%%%%%%%%%%%%%%%%%%%%%%%%%%%%%%%%%%%%%%%%%%%%%%%%%%%%%%%%%%%%%%%%%%%%%%%%%%%%%%%%%%%%%%%%%%%%%%%%%%%%%%%%%
\section{Introduction}

For convenience of the reader, some necessary terminology and notation not mentioned in this section can be found in Section $2$. All digraphs considered in this paper are finite. In this paper, all paths, walks and cycles are always directed. For terminology and notation, we refer the reader to Bang-Jensen and Gutin \cite{1}.

Let $D$ be a digraph. A \emph{kernel} of $D$ is a subset $S\subseteq V(D)$ such that ($a$) for any pair of distinct vertices $x,y\in S$ are non-adjacent, and ($b$) for each vertex $v\in V(D)\setminus S$, there exists a vertex $s\in S$ such that $(v,s)\in A(D)$. This notion was originally introduced in the game theory by von Neumann and Morgenstern \cite{3} in 1944. Kernels have found many applications and several sufficient conditions for the existence of a kernel have been proved. In this paper, we will need the following result.
\begin{theorem}\cite{3}\label{z1.1} Let $D$ be a digraph. If $D$ has no cycle, then $D$ has a unique kernel. \end{theorem}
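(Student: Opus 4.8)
The plan is to prove existence and uniqueness simultaneously by strong induction on $n=|V(D)|$, exploiting the fact that a finite acyclic digraph always has a \emph{sink}, i.e.\ a vertex with out-degree $0$: taking the terminal vertex of a longest path, it can have no out-arc, for otherwise extending the path or revisiting one of its vertices would contradict maximality or acyclicity. The base case $n=0$ is trivial, with $\emptyset$ as the unique kernel.

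For the inductive step, the first key observation is that the sinks are forced into the kernel. If $w$ is a sink, then $w$ must belong to every kernel $S$: condition (b) would require an arc $(w,s)$ with $s\in S$, which is impossible since $w$ has no out-arcs, so $w\notin S$ is untenable. Conversely, no in-neighbour of $w$ can lie in any kernel, as such a vertex would be adjacent to $w\in S$ and violate condition (a). This suggests peeling off $w$ together with $N^-(w)$ and recursing on the induced subdigraph $D'=D-(\{w\}\cup N^-(w))$, which is again acyclic with strictly fewer vertices and hence, by induction, has a unique kernel $S'$.

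For existence I would then verify that $S=S'\cup\{w\}$ is a kernel of $D$. Independence holds because $S'$ is independent in $D'$ and $w$ is adjacent to no vertex of $S'$ (it has no out-arcs, and all of its in-neighbours were deleted). Absorption is checked by cases: a deleted in-neighbour of $w$ is absorbed by $w$ directly, while any non-kernel vertex of $D'$ is absorbed within $D'$ by the kernel property of $S'$, the absorbing arc surviving in $D$.

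The delicate part, and the step I expect to be the main obstacle, is \emph{uniqueness}. Given an arbitrary kernel $S^\ast$ of $D$, the forcing observation gives $w\in S^\ast$ and $N^-(w)\cap S^\ast=\emptyset$, so $S^\ast\setminus\{w\}\subseteq V(D')$. The goal is to argue that $S^\ast\setminus\{w\}$ is a kernel of $D'$, whence it equals $S'$ by inductive uniqueness and forces $S^\ast=S$. Independence transfers immediately; the subtle point is absorption, where for a non-kernel vertex $v$ of $D'$ one obtains an absorbing arc $(v,s)$ with $s\in S^\ast$ in $D$ and must rule out $s=w$. This is exactly where deleting $N^-(w)$ pays off: since $v$ survives in $D'$ it is not an in-neighbour of $w$, so $s\neq w$, the arc lies in $D'$, and the argument closes.
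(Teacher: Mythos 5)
Your proof is correct. Note, however, that the paper does not prove this statement at all: Theorem \ref{z1.1} is quoted from von Neumann and Morgenstern \cite{3} as a known classical fact, so there is no in-paper argument to compare against. Your induction is the standard one and all the delicate points are handled properly: the existence of a sink $w$ via a longest path, the forcing of $w$ into every kernel and the exclusion of $N^-(w)$ from every kernel, and in the uniqueness step the observation that a vertex $v$ surviving in $D'=D-(\{w\}\cup N^-(w))$ cannot be absorbed by $w$ (since $v\notin N^-(w)$), so its absorbing arc stays inside $D'$. The only cosmetic remark is that the more common textbook phrasing processes all vertices in reverse topological order and defines $S$ by the rule that $v\in S$ iff no out-neighbour of $v$ lies in $S$; your peel-one-sink variant is equivalent and equally valid.
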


Let $D$ be a digraph and $m$ a positive integer. An arc-colouring of $D$ is a mapping $C: A(D)\rightarrow N$, where $N$ is the set of natural numbers. $C(D)$ and $C(x,y)$ denote the set colours appearing on all the arcs of $D$ and the colour appearing on the arc $(x,y)\in A(D)$. We call $D$ an \emph{$m$-arc-coloured digraph} if $D$ has such an arc-colouring with $|C(D)|=m$. An arc-coloured digraph is called \emph{monochromatic} if all arcs are assigned same colour. Define a \emph{kernel by monochromatic paths} of an arc-coloured digraph $D$ to be a subset $S\subseteq V(D)$ such that ($a$) there exists no monochromatic path for any pair of vertices of $S$, and ($b$) for each vertex outside $S$ can reach $S$ by a monochromatic path.

The concept of a kernel by monochromatic paths of an arc-coloured digraph was introduced by Sands, Sauer and Woodrow \cite{2} in 1982 as a generalization of the concept of a kernel. They showed that every $2$-coloured digraph has a kernel by monochromatic paths. As a corollary, they showed that every $2$-coloured tournament has a one-vertex kernel by monochromatic paths. They also proposed the problem that whether a $3$-coloured tournament with no rainbow triangle has a one-vertex kernel by monochromatic paths. In 1988, Shen \cite{4} proved that for $m\geqslant3$ every $m$-coloured tournament with no rainbow triangle and no rainbow transitive triangle has a one-vertex kernel by monochromatic paths, and also showed that the condition ``with no rainbow triangle and no rainbow transitive triangle'' cannot be improved for $m\geqslant5$. In 2004, Galeana-S\'{a}nchez and Rojas-Monroy \cite{6} showed the condition of Shen cannot be improved for $m=4$ by constructing a family of counterexamples. For more results about kernels by monochromatic paths of an arc-coloured digraph can be found in \cite{7,14,8,9,10,12}.

An arc-coloured digraph is called \emph{properly coloured} if any two consecutive arcs have distinct colours. Define a \emph{kernel by properly coloured paths} of an arc-coloured digraph $D$ to be a subset $S\subseteq V(D)$ such that ($a$) there exists no properly coloured path for any pair of vertices of $S$, and ($b$) for each vertex outside $S$ can reach $S$ by a properly coloured path.

The concept of a kernel by properly coloured paths of an arc-coloured digraph was introduced by Delgado-Escalante and Galeana-S\'{a}nchez \cite{15} in 2009 as a generalization of the concept of a kernel. Bai, Fujita and Zhang \cite{11} showed in 2018 that it is NP-hard to recognize wether an arc-coloured digraph has a kernel by properly coloured paths. They conjecture that every arc-coloured digraph with all cycles properly coloured has a kernel by properly coloured paths and verified the conjecture for unicyclic digraphs, semicomplete digraphs and bipartite tournaments. In 2018, Delgado-Escalante, Galeana-S\'{a}nchez and O'Reilly-Regueiro \cite{13} gave some sufficient conditions for the existence of a kernel by properly coloured paths in arc-coloured tournaments, quasi-transitive digraphs and $k$-partite tournaments.

An arc-coloured digraph is called \emph{rainbow} if all arcs have distinct colours. Define a \emph{kernel by rainbow paths} (for short, \emph{RP-kernel}) of an arc-coloured digraph $D$ to be a subset $S\subseteq V(D)$ such that ($a$) there exists no rainbow path for any pair of vertices of $S$, and ($b$) for each vertex outside $S$ can reach $S$ by a rainbow path.

The concept of a RP-kernel of an arc-coloured digraph was introduced by Bai, Fujita and Zhang \cite{11} in 2018 as a generalization of the concept of kernel. They showed that it is NP-hard to recognize wether an arc-coloured digraph has a RP-kernel. Recently, Bai, Li and Zhang \cite{16} showed the following theorem and proposed the following problem.
\begin{theorem}\cite{16}
It is NP-complete to decided wether an arc-coloured tournament has a RP-kernel.
\end{theorem}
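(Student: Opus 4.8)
The plan is to prove both membership in NP and NP-hardness, the crux being a structural simplification that is special to tournaments.

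First I would record the observation that drives everything. In a tournament $D$ any two distinct vertices $x,y$ are joined by an arc, and a single arc is trivially a rainbow path (one arc cannot repeat a colour). Hence if $S$ is a RP-kernel and $x,y\in S$ are distinct, the arc between them is a rainbow path, contradicting condition ($a$). Therefore every RP-kernel of an arc-coloured tournament consists of a single vertex, and $\{s\}$ is a RP-kernel if and only if every vertex $v\neq s$ reaches $s$ by a rainbow path, condition ($a$) being vacuous. Thus the decision problem is exactly: \emph{does there exist a vertex $s$ that is rainbow-reachable from every other vertex?}

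Membership in NP is then immediate. A certificate consists of the candidate vertex $s$ together with, for each $v\neq s$, a $v$-$s$ path; one verifies in polynomial time that each listed path is indeed a path of $D$ whose arcs carry pairwise distinct colours. No verification of condition ($a$) is required because the kernel is a singleton. This is precisely where the tournament hypothesis is essential: for general digraphs one would have to certify the \emph{non-existence} of rainbow paths between members of $S$, and certifying non-existence is exactly what makes rainbow-path problems hard and keeps the general RP-kernel problem out of (apparent) membership in NP.

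For NP-hardness I would reduce from the problem of deciding whether an arc-coloured digraph contains a rainbow path between two prescribed vertices, which is NP-complete (alternatively, one may reduce directly from $3$-SAT, mirroring the general-digraph hardness proof of Bai, Fujita and Zhang). Given an instance $(H,a,b)$, I would construct an arc-coloured tournament $D$ containing a copy of $H$, a distinguished vertex $t$ in the role of the unique possible sink, and a few auxiliary vertices, arranged so that: (i) every vertex other than the image of $a$ reaches $t$ by a short rainbow path; (ii) the image of $a$ reaches $t$ by a rainbow path if and only if $H$ has a rainbow $a$-$b$ path; and (iii) no vertex other than $t$ is rainbow-reachable from all others, so $t$ is the only candidate kernel. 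Then $D$ has a RP-kernel, necessarily $\{t\}$, if and only if $(H,a,b)$ is a yes-instance.

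The main obstacle is the colouring of the arcs that must be added to turn the sparse gadget into a tournament. These completion arcs must not create a rainbow $a$-to-$t$ path that bypasses $H$ (which would make $D$ a yes-instance even when $(H,a,b)$ is a no-instance), and they must not accidentally turn some other vertex into a universal rainbow sink. The standard device is to force colour repetitions along every unintended route, assigning the completion arcs colours from a small shared palette placed so that any path using two of them repeats a colour and is therefore not rainbow, while reserving fresh colours for the intended $v$-$t$ paths. Checking that this single colouring simultaneously preserves (i)--(iii) and leaves $D$ a tournament is the delicate, case-analytic heart of the argument; the remaining steps are routine bookkeeping.
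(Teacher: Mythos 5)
Your opening observation is correct and matches what the paper itself notes in Section 4: in a tournament any two distinct vertices are joined by an arc, a single arc is a rainbow path, so every RP-kernel is a singleton $\{s\}$, and the problem reduces to asking whether some vertex is rainbow-reachable from all others. Your NP-membership argument built on this is sound: a certificate consisting of $s$ and one rainbow $(v,s)$-path per vertex $v\neq s$ is polynomial-size and checkable in polynomial time, and condition ($a$) is vacuous. (Note that the paper only cites this theorem from reference [16] and does not reproduce a proof, so there is no in-paper argument to compare against; the evaluation below is of your proposal on its own terms.)

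The genuine gap is the NP-hardness half, which is where the entire content of the theorem lives and which you have only described as a plan. You name a source problem (rainbow $(a,b)$-path in an arc-coloured digraph, or alternatively $3$-SAT), list three properties (i)--(iii) that the target tournament must satisfy, and then state that arranging the completion arcs and their colours so that all three hold simultaneously is ``the delicate, case-analytic heart of the argument.'' That heart is exactly what is missing. In particular: you give no actual gadget, no rule for orienting and colouring the $\Theta(n^2)$ arcs needed to complete a sparse digraph $H$ to a tournament, and no verification that the ``small shared palette'' device really kills every unintended rainbow route from the image of $a$ to $t$ while not accidentally creating a second universal rainbow sink (which would violate (iii) and make the reduction always answer yes). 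These completion arcs also create many new short paths \emph{into} $H$ and \emph{out of} $H$ that can splice with arcs of $H$ to form rainbow $a$--$t$ walks not corresponding to any rainbow path of $H$; ruling those out is not routine bookkeeping but the actual proof. As written, the proposal establishes membership in NP and correctly identifies the shape a reduction must take, but it does not constitute a proof of NP-hardness.
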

\begin{problem}\cite{16}
Is it true that every arc-coloured digraph with all cycles rainbow has a RP-kernel?
\end{problem}
In this paper, we give some sufficient conditions for the existence of a RP-kernel in arc-coloured unicyclic digraphs, semicomplete digraphs, quasi-transitive digraphs and bipartite tournaments and prove that these arc-coloured digraphs have RP-kernels if certain ``short" cycles and certain ``small" induced subdigraphs are rainbow.

%%%%%%%%%%%%%%%%%%%%%%%%%%%%%%%%%%%%%%%%%%%%%%%%%%%%%%%%%%%%%%%%%%%%%%%%%%%%%%%%%%%%%%%%%%%%%%%%%%%%%%%%%%%%%%%%%%%%%%%%%%%%%%
\section{Terminology and Preliminaries}

Let $D$ be a digraph. $V(D)$ and $A(D)$ denote its vertex and arc sets. If $(x,y)$ is an arc of $D$, sometimes we use the notation $x\rightarrow y$ to denote this arc. The \emph{out-neighbourhood} (resp. \emph{in-neighbourhood}) of a vertex $x\in V(D)$ is the $N_D^+(x)=\{y\,|\,(x,y)\in A(D)\}$ (resp. $N_D^-(x)=\{y\,|\,(y,x)\in A(D)\}$). For a vertex $x\in V(D)$, the \emph{out-degree} (resp. \emph{in-degree}) of $x$ is denoted by $d_D^+(x)=|N_D^+(x)|$ (resp. $d_D^-(x)=|N_D^-(x)|$). A vertex in $D$ is called \emph{sink} (resp. \emph{source}) if $d_D^+(x)=0$(resp. $d_D^-(x)=0$). An arc $(x,y)\in A(D)$ is called \emph{asymmetrical} (resp. \emph{symmetrical}) if $(y,x)\notin A(D)$ (resp. $(y,x)\in A(D)$). If $S$ is a nonempty set of $V(D)$, then the subdigraph $D[S]$ induced by $S$ is the digraph having vertex set $S$, and whose arcs are all those arcs of $D$ joining vertices of $S$.

For disjoint sets $X$ and $Y$, $X\Rightarrow Y$ means that every vertex of $X$ dominates every vertex of $Y$ and $y\nrightarrow x$ for any $x\in X$ and $y\in Y$. If $Y=\{v\}$, we always denote $X\Rightarrow v$ instead of $X\Rightarrow \{v\}$. $X\nRightarrow Y$ means that there exists a vertex $u\in Y$ such that $X\nRightarrow u$.

For two distinct vertices $x,y\in V(D)$, a path $P$ from $x$ to $y$ is denoted by $(x,y)$-path, $\ell(P)$ denote the length of path $P$. Let $S,K\subseteq V(D)$. $(x,S)$-path in $D$ denote an $(x,s)$-path for some $s\in S$. $(S,x)$-path in $D$ denote an $(s,x)$-path for some $s\in S$. A closed path is called a cycle.
We always call a cycle $C$ of length $\ell(C)$ by $\ell(C)$-cycle.

In the following proof, we use the definition below.
\begin{definition}\label{z1.3}
For an arc-coloured digraph $D$, the \emph{rainbow closure} of $D$ denoted by $C_r(D)$, is a digraph such that:

($a$) $V(C_r(D))=V(D)$;

($b$) $A(C_r(D))=A(D)\cup \{(u,v)\,|\,\mbox{there exists a rainbow }(u,v)\mbox{-path in } D\}$.
\end{definition}

It is not hard to see the following simple and useful result.
\begin{observation}\label{z1.4}
An arc-coloured digraph $D$ has a RP-kernel if and only if $C_r(D)$ has a kernel.
\end{observation}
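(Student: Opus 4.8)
The plan is to show that the two defining clauses of an RP-kernel of $D$ translate, one for one, into the two defining clauses of an ordinary kernel of $C_r(D)$, so that a subset $S\subseteq V(D)$ serves as an RP-kernel of $D$ exactly when it serves as a kernel of $C_r(D)$. The bridge between the notions is the construction in Definition~\ref{z1.3}: by the definition of $A(C_r(D))$, an arc $(u,v)$ lies in $C_r(D)$ precisely when there is a rainbow $(u,v)$-path in $D$. (A single arc is a rainbow path of length one, so every arc of $D$ already arises in this way, and the union in Definition~\ref{z1.3} is harmless.) Consequently, two vertices $u,v$ are non-adjacent in $C_r(D)$ if and only if $D$ has no rainbow path between them in either direction.

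First I would fix $S\subseteq V(D)$ and prove the forward implication. If $S$ is an RP-kernel of $D$, then clause ($a$) states that no pair of distinct vertices of $S$ is joined by a rainbow path in either direction, which by the bridge above is exactly the statement that the vertices of $S$ are pairwise non-adjacent in $C_r(D)$, i.e., clause ($a$) of a kernel. Clause ($b$) states that each $v\notin S$ reaches some $s\in S$ by a rainbow path in $D$, which by the bridge yields an arc $(v,s)\in A(C_r(D))$, i.e., clause ($b$) of a kernel. Hence $S$ is a kernel of $C_r(D)$.

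The converse runs the same argument in reverse. If $S$ is a kernel of $C_r(D)$, then the pairwise non-adjacency of the vertices of $S$ in $C_r(D)$ unwinds to the absence of a rainbow path between any two of them in $D$ (clause ($a$) of an RP-kernel), and the existence for each $v\notin S$ of an arc $(v,s)\in A(C_r(D))$ with $s\in S$ unwinds to a rainbow $(v,s)$-path in $D$ (clause ($b$) of an RP-kernel). Thus $S$ is an RP-kernel of $D$, completing the equivalence.

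Because the whole proof is a direct translation through the single fact that arcs of $C_r(D)$ encode rainbow reachability in $D$, I do not expect any genuine obstacle. The only point deserving a moment's care is checking that the non-adjacency requirement in the kernel definition, which forbids an arc in either direction, lines up correctly with the RP-kernel requirement forbidding a rainbow path in either direction; since the definition of $C_r(D)$ records each direction independently, these match exactly.
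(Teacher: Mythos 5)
Your proof is correct: the paper states this observation without proof (``it is not hard to see''), and your argument---identifying arcs of $C_r(D)$ with rainbow reachability in $D$ (noting that a single arc is itself a rainbow path) and then translating clauses ($a$) and ($b$) of the two definitions one for one---is exactly the intended direct verification, and in fact establishes the slightly stronger fact that the same set $S$ witnesses both properties.
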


A digraph $D$ is called a \emph{kernel-perfect digraph} or \emph{KP-digraph} when every induced subdigraph of $D$ has a kernel. The following theorem give a sufficient condition for a digraph to be a \emph{KP}-digraph.
\begin{theorem}\cite{5}\label{z1.2}
Let $D$ be a digraph such that every cycle in $D$ has at least one symmetrical arc. Then $D$ is a KP-digraph.
\end{theorem}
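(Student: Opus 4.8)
The plan is to prove, by induction on $|V(D)|$, that every digraph satisfying the hypothesis has a kernel. This suffices: the property ``every cycle has at least one symmetrical arc'' is obviously inherited by every induced subdigraph, so once the claim is established, each induced subdigraph of $D$ satisfies the hypothesis and therefore has a kernel, which is exactly the assertion that $D$ is a KP-digraph. The base case $|V(D)|\le 1$ is immediate, and throughout I assume, as is standard for kernels, that $D$ is loopless.

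The first step is to turn the hypothesis into a structural statement. Let $D_a$ be the digraph on vertex set $V(D)$ whose arcs are exactly the asymmetrical arcs of $D$. A cycle of $D_a$ would be a cycle of $D$ all of whose arcs are asymmetrical, which the hypothesis forbids; hence $D_a$ is acyclic. Since every finite acyclic digraph has a sink, I may choose a vertex $v$ with $d_{D_a}^+(v)=0$. By the definition of $D_a$ this means that every arc $(v,w)\in A(D)$ is symmetrical, i.e. $N_D^+(v)\subseteq N_D^-(v)$: every out-neighbour of $v$ is also an in-neighbour.

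For the inductive step I would peel off this particular $v$. Set $D'=D-(\{v\}\cup N_D^-(v))$; it is an induced subdigraph, so it inherits the hypothesis and has strictly fewer vertices, whence by induction it has a kernel $K'$. I claim $K=K'\cup\{v\}$ is a kernel of $D$. For independence, the inclusion $N_D^+(v)\subseteq N_D^-(v)$ shows that \emph{all} neighbours of $v$ lie in $N_D^-(v)$ and have been deleted, so no arc joins $v$ to $K'$; together with the independence of $K'$ in $D'$ this makes $K$ independent. For absorption, each deleted vertex of $N_D^-(v)$ sends an arc into $v\in K$, while each vertex of $V(D')\setminus K'$ already sends an arc into $K'\subseteq K$ inside $D'$ (hence in $D$); thus every vertex outside $K$ reaches $K$ by a single arc.

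I expect the only delicate point to be the simultaneous control of independence and absorption in this step, and this is precisely where the hypothesis enters. To force independence one is compelled to delete the entire neighbourhood of $v$, but deleting an out-neighbour is harmless only if that vertex still reaches the prospective kernel, which is guaranteed exactly when it points back to $v$. The inclusion $N_D^+(v)\subseteq N_D^-(v)$ supplies this, and obtaining that inclusion is the whole reason for choosing $v$ as a sink of the asymmetrical part $D_a$ rather than of $D$ itself; recognising that the right vertex to remove is a sink of $D_a$ is the crux of the argument. Equivalently, one may phrase the proof by observing that $\{v\}$ is a nonempty semikernel and invoking the fact that a digraph all of whose induced subdigraphs admit nonempty semikernels is kernel-perfect, but the direct induction above keeps the argument self-contained.
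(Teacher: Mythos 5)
Your argument is correct. Note first that the paper does not prove this statement at all: it is quoted as a known result of Duchet (reference [5]) and used as a black box, so there is no in-paper proof to compare against; you have supplied a genuinely self-contained argument, and it is essentially the classical one. The key steps all check out: a cycle of the asymmetrical part $D_a$ would be a cycle of $D$ with no symmetrical arc, so $D_a$ is acyclic and has a sink $v$, giving $N_D^+(v)\subseteq N_D^-(v)$; the induced subdigraph $D'=D-(\{v\}\cup N_D^-(v))$ inherits the hypothesis (an arc of an induced subdigraph is symmetrical there if and only if it is symmetrical in $D$, and every cycle of $D'$ is a cycle of $D$), so induction applies; independence of $K'\cup\{v\}$ holds because every neighbour of $v$, in either direction, lies in $N_D^-(v)$ and has been deleted; and absorption holds because $V(D)\setminus K$ is exactly $N_D^-(v)\cup(V(D')\setminus K')$, the first part dominated by $v$ and the second by $K'$. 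Your closing remark that one could instead observe that $\{v\}$ is a semikernel and invoke Neumann-Lara's theorem is also accurate, but the direct induction is cleaner here. The only hypotheses you quietly use --- finiteness (for the existence of a sink in $D_a$) and looplessness --- are both standing assumptions in this setting, so there is no gap.
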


%%%%%%%%%%%%%%%%%%%%%%%%%%%%%%%%%%%%%%%%%%%%%%%%%%%%%%%%%%%%%%%%%%%%%%%%%%%%%%%%%%%%%%%%%%%%%%%%%%%%%%%%%%%%%%%%%%%%%%%
\section{Unicyclic digraphs}
 A digraph $D$ is a \emph{unicyclic digraph} if it contains only one cycle. In this section, we consider the sufficient conditions for the existence of a RP-kernel in an arc-coloured unicyclic digraph.
\begin{theorem}\label{z5.1}
Let $D$ be an $m$-arc-coloured unicyclic digraph such that the unique cycle is rainbow. Then $D$ has a RP-kernel.
\end{theorem}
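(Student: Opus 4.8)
The plan is to work entirely in the rainbow closure $C_r(D)$ and invoke Observation \ref{z1.4}: it suffices to prove that $C_r(D)$ has a kernel. To produce such a kernel I intend to verify the hypothesis of Theorem \ref{z1.2}, namely that every cycle of $C_r(D)$ contains at least one symmetrical arc; then $C_r(D)$ is a KP-digraph and in particular has a kernel, which by Observation \ref{z1.4} yields a RP-kernel of $D$.

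The first key step is a structural observation about $D$. Write $C$ for the unique cycle of $D$. Since $D$ is unicyclic, $C$ is the only directed cycle, so no vertex outside $V(C)$ lies on any cycle. I would then argue that $V(C)$ is the only nontrivial strong component of $D$: if some $u\notin V(C)$ were mutually reachable with another vertex, concatenating a path out of $u$ with a path back to $u$ produces a closed walk through $u$, from which one extracts a cycle through $u$ by deleting repeated intermediate vertices — impossible, since the only cycle is $C$ and $u\notin V(C)$. Hence every strong component of $D$ other than $V(C)$ is a single vertex.

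Next I would transfer this to $C_r(D)$. Because each arc of $C_r(D)$ arises from a (rainbow) path of $D$ and $A(D)\subseteq A(C_r(D))$, the reachability relation of $C_r(D)$ coincides with that of $D$; consequently $D$ and $C_r(D)$ have the same strong components. Now take any cycle $\gamma$ of $C_r(D)$. Its vertices are mutually reachable, so they lie in a single nontrivial strong component, which must be $V(C)$. Thus every cycle of $C_r(D)$ is contained in $V(C)$.

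Finally I use the hypothesis that $C$ is rainbow. For any two distinct $v_i,v_j\in V(C)$, the sub-path of $C$ from $v_i$ to $v_j$ along the cyclic order is rainbow (all its colours are distinct because $C$ is), so $(v_i,v_j)\in A(C_r(D))$; the complementary sub-path of $C$ gives $(v_j,v_i)\in A(C_r(D))$. Hence every arc of $C_r(D)$ joining two vertices of $V(C)$ is symmetrical. Combined with the previous step, every cycle of $C_r(D)$ uses only vertices of $V(C)$ and therefore all of its arcs — in particular at least one — are symmetrical, so Theorem \ref{z1.2} furnishes a kernel of $C_r(D)$ and the proof is complete. I expect the only delicate points to be the two reachability arguments: verifying that $V(C)$ is the unique nontrivial strong component of $D$, and checking that passing to the closure creates no cycle among vertices that are not mutually reachable in $D$. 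Everything else is routine bookkeeping.
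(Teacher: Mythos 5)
Your argument is correct, but it takes a genuinely different route from the paper. The paper proves Theorem \ref{z5.1} by a direct greedy construction: it lists the strong components $D_1,\ldots,D_k$ in an acyclic order (noting that exactly one of them is $V(C)$ and all others are single vertices), seeds $S$ with a vertex of the last component, and then repeatedly adds a vertex from the latest component that cannot yet reach $S$ by a rainbow path, finally checking that the resulting set is a RP-kernel. You instead route everything through the machinery of Section 2: you show that the only nontrivial strong component of $D$ is $V(C)$, that $C_r(D)$ has the same reachability relation (hence the same strong components) as $D$, so every cycle of $C_r(D)$ lives inside $V(C)$, and that the rainbowness of $C$ makes every arc of $C_r(D)[V(C)]$ symmetrical; Theorem \ref{z1.2} and Observation \ref{z1.4} then finish the proof. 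Both arguments are sound. Yours is arguably cleaner and more uniform with the later sections of the paper (which also verify Duchet's condition on $C_r(D)$), at the cost of invoking Theorem \ref{z1.2} where the paper's construction is elementary and exhibits the kernel explicitly. The two ``delicate points'' you flag are indeed the only places needing care: extracting from a closed walk through $u$ a cycle through $u$ (which works by repeatedly excising detours between repeated occurrences of internal vertices, always retaining a closed subwalk of positive length based at $u$), and the fact that expanding each closure arc into the underlying arc or rainbow path of $D$ shows reachability is preserved in both directions. Both check out.
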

\begin{proof}
Let $D$ be an $m$-arc-coloured unicyclic digraph with the unique cycle $C$. We will show the result by constructing a RP-kernel $S$ of $D$. If $D$ is strong, then $D=C$. Since the cycle $C$ is rainbow, each vertex of $C$ forms a RP-kernel of $C$. The desired result follows directly.

Now assume $D$ is not strong. Then $D$ has strong components $D_1, D_2 ,\ldots, D_k$ $(k\geqslant2)$ such that there exists no arc from $D_i$ to $D_j$ for any $i>j$. Since $D$ is unicyclic, then one of the strong components $D_1, D_2 ,\ldots, D_k$ containing the unique cycle $C$ and any other strong component is a single vertex. If $D_k$ is a single vertex, say $v=D_k$, we put $v$ into $S$. If $D_k=C$, we put an arbitrary vertex of $C$, say also $v$, into $S$. Since $C$ is rainbow, $V(C)\setminus \{v\}$ can reach $v$ by a rainbow path. Let $j_1\in \{1,2,\ldots,k-1\}$ be the largest integer such that there exists no rainbow path from some vertex of $D_{j_1}$ to $S$. If $D_{j_1}$ is a single vertex, say $v_{j_1}=D_{j_1}$, we put $v_{j_1}$ into $S$. If $D_{j_1}=C$, we put an arbitrary vertex satisfying the condition above of $C$, say also $v_{j_1}$, into $S$. Let $j_2\in \{1,2,\ldots,j_1-1\}$ be the largest integer such that there exists no rainbow path from some vertex of $D_{j_2}$ to $S$. If $D_{j_2}$ is a single vertex, say $v_{j_2}=D_{j_2}$, we put $v_{j_2}$ into $S$. If $D_{j_2}=C$, we put an arbitrary vertex satisfying the condition above of $C$, say also $v_{j_2}$, into $S$. Continue this procedure until all the remaining vertices in $V(D)\backslash S$ can reach $S$ by a rainbow path. Let $v_{j_r}$ be the last vertex putting into $S$. It is not hard to check that the vertex set $S=\{v_{j_1},v_{j_2},\ldots,v_{j_r},v\}$ is a RP-kernel of $D$. \end{proof}

%%%%%%%%%%%%%%%%%%%%%%%%%%%%%%%%%%%%%%%%%%%%%%%%%%%%%%%%%%%%%%%%%%%%%%%%%%%%%%%%%%%%%%%%%%%%%%%%%%%%%%%%%%%%%%
\section{Semicomplete digraphs}

A digraph $D$ is \emph{semicomplete} if for any pair of vertices there exists at least one arc between them. A tournament is a semicomplete digraph with no $2$-cycle. In this section, we consider the sufficient conditions for the existence of a RP-kernel in an arc-coloured semicomplete digraph. Since each pair of vertices in a semicomplete digraph are adjacent, it follows that a RP-kernel of a semicomplete digraph consists of only one vertex.

%\begin{observation}\label{z2.1} $|A(D)|$-arc-coloured semi-complete digraph(tournament) $D$ has a kernel by rainbow paths.\end{observation} \begin{proof}If $D$ is strong, then each $D$ has a Hamiltonian cycle, say $C$. Since $D$ is $|A(D)|$-arc-coloured, then $C$ is rainbow and every vertex of $D$ is a kernel by rainbow paths of $D$.If $D$ is non-strong. Let $D_1,D_2,\ldots,D_k$ is the acyclic ordering of strong components of $D$ such that there is no arc from $D_j$ to $D_i$ for $j>i$. Since $D$ is a semi-complete digraph, then $D$ has only one terminal strong component $D_k$ and there are arcs from $D_i$ to $D_k$ for each $i<k$. Moreover, $D_k$ has a Hamiltonian cycle $C_k$ which by hypothesis is rainbow, then every vertex of $D_k$ is a kernel by rainbow paths of $D_k$. Combining with $D_k$ is the only one terminal component of $D$, we obtain that every vertex of $D_k$ is a kernel by rainbow paths of $D$.\end{proof}%%%

\begin{theorem}\label{z2.2}
Let $D$ be an $m$-arc-coloured semicomplete digraph with all $3$-cycles are rainbow in $D$. Then $D$ has a one-vertex RP-kernel.
\end{theorem}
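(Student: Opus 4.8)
The plan is to route everything through the rainbow closure and the kernel-perfectness criterion already recorded in the excerpt. First I would record the structural simplification peculiar to semicomplete digraphs: since every two vertices of $D$ are adjacent, any arc is a rainbow path of length $1$, so a RP-kernel cannot contain two vertices; hence a RP-kernel, if it exists, is a single vertex $v$, and condition $(b)$ says exactly that every other vertex reaches $v$ by a rainbow path. By Observation~\ref{z1.4} it therefore suffices to produce a kernel of $C_r(D)$, and since $C_r(D)\supseteq D$ is again semicomplete, any kernel of it is automatically a single vertex. By Theorem~\ref{z1.2} it is in turn enough to prove the following claim: \emph{every cycle of $C_r(D)$ contains a symmetrical arc.}

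To prove the claim I would argue by contradiction, assuming there is a cycle $W=z_1\to z_2\to\cdots\to z_k\to z_1$ in $C_r(D)$ all of whose arcs are asymmetrical, i.e.\ $z_{i+1}\to z_i\notin A(C_r(D))$ for every $i$ (indices mod $k$). The first observation is that $W$ is actually a directed cycle of $D$ itself: for each $i$, the absence of $z_{i+1}\to z_i$ in $C_r(D)$ means in particular that $z_{i+1}\to z_i\notin A(D)$, and since $D$ is semicomplete the arc $z_i\to z_{i+1}$ must lie in $A(D)$. Moreover $k\geq 3$, because a $2$-cycle $z_1\to z_2\to z_1$ in $C_r(D)$ would consist of two mutually reverse arcs and hence be symmetrical, contradicting the assumption on $W$.

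The crux of the argument is then a counting observation on out-degrees in $D$. Suppose, for contradiction to the claim's failure, that for \emph{every} $i$ there is no vertex $c$ with $z_{i+1}\to c$ and $c\to z_i$ in $D$. Fixing $i$ and taking any $c\in N_D^+(z_{i+1})$, the hypothesis forbids $c\to z_i$, while $c\neq z_i$ (since $z_{i+1}\to z_i\notin A(D)$ rules out $c=z_i$); so semicompleteness forces $z_i\to c$, giving $N_D^+(z_{i+1})\subseteq N_D^+(z_i)$. This inclusion is proper, because $z_{i+1}\in N_D^+(z_i)$ (it is a $W$-arc) but $z_{i+1}\notin N_D^+(z_{i+1})$, whence $d_D^+(z_{i+1})<d_D^+(z_i)$ for all $i$. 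Chaining these strict inequalities around the cycle yields $d_D^+(z_1)>d_D^+(z_2)>\cdots>d_D^+(z_k)>d_D^+(z_1)$, an impossibility. Therefore some index $a$ and some vertex $c$ satisfy $z_{a+1}\to c\to z_a$ in $D$, and together with the arc $z_a\to z_{a+1}$ this gives a genuine directed $3$-cycle $z_a\to z_{a+1}\to c\to z_a$ on three distinct vertices.

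Here the colour hypothesis finally enters: this $3$-cycle is rainbow, so the three colours $C(z_a,z_{a+1})$, $C(z_{a+1},c)$, $C(c,z_a)$ are pairwise distinct, and in particular $z_{a+1}\to c\to z_a$ is a rainbow $(z_{a+1},z_a)$-path in $D$. Hence $z_{a+1}\to z_a\in A(C_r(D))$, contradicting the assumed asymmetry of the $W$-arc $z_a\to z_{a+1}$. This proves the claim, so $C_r(D)$ is a KP-digraph by Theorem~\ref{z1.2}, its kernel is a single vertex, and that vertex is the desired one-vertex RP-kernel of $D$. The step I expect to carry the whole proof is the out-degree monotonicity argument: reducing the existence of the required rainbow $3$-cycle to a purely degree-theoretic impossibility is what makes the colour hypothesis bite without any case analysis on the chords of $W$, and verifying that this cycle lands on three distinct vertices and is a true directed triangle (rather than a transitive one) is the only place demanding care.
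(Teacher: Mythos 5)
Your proof is correct, but it takes a genuinely different route from the paper. The paper's argument is direct and constructive: it picks a vertex $v$ of maximum in-degree, notes that any $u\in N_D^+(v)$ must have an out-neighbour $w\in N_D^-(v)$ (else $N_D^-(v)\cup\{v\}\subseteq N_D^-(u)$ would contradict the maximality of $d_D^-(v)$), and then uses the rainbow $3$-cycle $(u,w,v,u)$ to produce a rainbow $(u,v)$-path, so $\{v\}$ is the RP-kernel outright. You instead pass to the rainbow closure, invoke Theorem~\ref{z1.2}, and rule out a cycle $W$ of $C_r(D)$ with no symmetrical arc via the out-degree monotonicity $d_D^+(z_1)>d_D^+(z_2)>\cdots>d_D^+(z_1)$, which forces a vertex $c$ with $z_{a+1}\to c\to z_a$ and hence a rainbow $3$-cycle giving the missing reverse arc. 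The two arguments share the same combinatorial kernel --- a neighbourhood-inclusion-forces-strict-degree-inequality trick --- but you deploy it locally around a putative bad cycle while the paper deploys it once at a globally extremal vertex. The paper's version is shorter and exhibits the kernel vertex explicitly; yours is less constructive but proves the stronger intermediate statement that $C_r(D)$ is a KP-digraph, and it runs in exactly the style the paper itself uses later for quasi-transitive digraphs (Theorem~\ref{z4.3}) and bipartite tournaments (Theorem~\ref{z3.5}), so it unifies the semicomplete case with those. All the delicate points you flag (that $W\subseteq D$, that $k\geqslant 3$, that $c\notin\{z_a,z_{a+1}\}$, and that the resulting triangle is a directed $3$-cycle rather than a transitive one) are handled correctly.
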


\begin{proof}
Let $v$ be a vertex of $D$ with maximum in-degree. Since $D$ is a semicomplete digraph, it follows that $N_D^+(v)\cup N_D^-(v)\cup \{v\}=V(D)$. Moreover, for any $u\in N_D^+(v)$, there exists a vertex $w\in N_D^-(v)$ such that $u\rightarrow w$. If not, then for any $w\in N_D^-(v)$, we have $w\rightarrow u$. This implies that $N_D^-(v)\cup \{v\}\subseteq N_D^-(u)$, which contradicts the choice of $v$. So $(u,w,v,u)$ is a rainbow $3$-cycle. It follows $(u,w,v)$ is a rainbow $(u,v)$-path. Now for any $u\in N_D^+(v)$, there exists a rainbow $(u,v)$-path. Clearly, for any $w\in N_D^-(v)$, there exists a rainbow $(w,v)$-path. Combining with $N_D^+(v)\cup N_D^-(v)\cup \{v\}=V(D)$, we have for any $x\in V(D)\setminus\{v\}$, there exists a rainbow $(x,v)$-path. Thus $\{v\}$ is a one-vertex RP-kernel of $D$.
\end{proof}

\begin{corollary}\cite{16}\label{z2.3}
Let $T$ be an $m$-arc-coloured tournament with all $3$-cycles are rainbow in $D$. Then $T$ has a one-vertex RP-kernel.
\end{corollary}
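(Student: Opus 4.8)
The plan is to observe that this corollary is an immediate specialization of Theorem~\ref{z2.2}, so almost no new work is required. By definition in this section, a tournament is precisely a semicomplete digraph with no $2$-cycle; in particular, every tournament $T$ is itself a semicomplete digraph. Thus the class of arc-coloured tournaments is contained in the class of arc-coloured semicomplete digraphs, and the structural hypothesis carries over verbatim.

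First I would note that the hypothesis ``all $3$-cycles are rainbow'' appearing in the corollary is literally the hypothesis of Theorem~\ref{z2.2}, restricted to the digraph $T$. Since $T$ is semicomplete and all its $3$-cycles are rainbow, $T$ satisfies the hypotheses of Theorem~\ref{z2.2}. Applying that theorem to $D=T$ yields a one-vertex RP-kernel of $T$, which is exactly the desired conclusion. Concretely, one takes a vertex $v$ of maximum in-degree and invokes the argument of Theorem~\ref{z2.2}: for each $u\in N_T^+(v)$ the choice of $v$ forces some $w\in N_T^-(v)$ with $u\rightarrow w$, producing the rainbow $3$-cycle $(u,w,v,u)$ and hence a rainbow $(u,v)$-path, while each $w\in N_T^-(v)$ already reaches $v$ by a single arc.

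There is essentially no obstacle here, since the corollary is strictly weaker than the theorem it follows from; the only point worth recording is that the no-$2$-cycle condition of tournaments is never needed—Theorem~\ref{z2.2} already covers the more general semicomplete setting. Consequently the entire content of the corollary is the containment ``every tournament is semicomplete,'' and a one-line proof citing Theorem~\ref{z2.2} is complete.
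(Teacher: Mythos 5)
Your proof is correct and matches the paper's (implicit) reasoning exactly: the paper states this corollary immediately after Theorem~\ref{z2.2} with no separate proof, precisely because every tournament is a semicomplete digraph and the hypothesis transfers verbatim. Nothing further is needed.
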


%%%%%%%%%%%%%%%%%%%%%%%%%%%%%%%%%%%%%%%%%%%%%%%%%%%%%%%%%%%%%%%%%%%%%%%%%%%%%%%%%%%%%%%%%%%%%%%%%%%%%%%%
\section{Quasi-transitive digraphs}
A digraph $D$ is a \emph{quasi-transitive digraph} if whenever $\{(u,v),(v,w)\}\subseteq A(D)$ then either $(u,w)\in A(D)$ or $(w,u)\in A(D)$. In this section, we consider the sufficient conditions for the existence of a RP-kernel in an arc-coloured quasi-transitive digraph.

\begin{lemma}\cite{1}\label{z4.1}
Let $D$ be a quasi-transitive digraph. If for any pair of distinct vertices $x,y$ of $D$ such that $D$ has an $(x,y)$-path but $x$ does not dominate $y$, then either $y\rightarrow x$, or there exist vertices $u,v\in V(D)\setminus\{x,y\}$ such that $x\rightarrow u\rightarrow v\rightarrow y$ and $y\rightarrow u\rightarrow v\rightarrow x$.
\end{lemma}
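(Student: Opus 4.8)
The plan is to analyze a shortest $(x,y)$-path and read off the required structure from quasi-transitivity. Fix a shortest $(x,y)$-path $P\colon x=x_0\rightarrow x_1\rightarrow\cdots\rightarrow x_k=y$; since $x$ does not dominate $y$, we have $k\geqslant 2$. Two facts will be used repeatedly. First, minimality of $P$ forbids forward shortcuts, so $x_i\nrightarrow x_j$ whenever $j\geqslant i+2$. Second, applying quasi-transitivity to each consecutive pair $x_i\rightarrow x_{i+1}\rightarrow x_{i+2}$ produces an arc between $x_i$ and $x_{i+2}$, and the no-shortcut fact forces its direction to be $x_{i+2}\rightarrow x_i$. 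Thus every ``distance-two backward'' arc $x_{i+2}\rightarrow x_i$ is present, and these arcs are the engine of the whole argument.

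Next I would split on $k$. If $k=2$, the backward arc $x_2\rightarrow x_0$ is exactly $y\rightarrow x$, giving the first alternative. If $k=3$, I additionally have $x_3\rightarrow x_1$ and $x_2\rightarrow x_0$; should $y=x_3$ dominate $x=x_0$ we are again in the first alternative, and otherwise I would take $u=x_1$ and $v=x_2$, for which $x\rightarrow u\rightarrow v\rightarrow y$ is $P$ itself while $y\rightarrow u\rightarrow v\rightarrow x$ is the walk $x_3\rightarrow x_1\rightarrow x_2\rightarrow x_0$ assembled from $x_3\rightarrow x_1$, the path-arc $x_1\rightarrow x_2$, and $x_2\rightarrow x_0$. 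Both $u$ and $v$ lie in $V(D)\setminus\{x,y\}$, so this yields the second alternative. Notably, this case is the only one that genuinely needs the $u,v$-structure.

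The remaining case $k\geqslant 4$ is where the real work lies, and I claim it always yields $y\rightarrow x$. The idea is to telescope the distance-two backward arcs: combining $x_j\rightarrow x_{j-2}\rightarrow x_{j-4}$ through quasi-transitivity and orienting the new arc by the no-shortcut fact gives $x_j\rightarrow x_{j-4}$, and iterating builds an arc $x_k\rightarrow x_{k-2t}$ for every admissible $t$. When $k$ is even this chain reaches $x_0$ directly, so $y\rightarrow x$. When $k$ is odd the even-length steps only descend to $x_1$, so the parity must be corrected: here I would first manufacture a single distance-three backward arc $x_3\rightarrow x_0$ (via the path-arc $x_3\rightarrow x_4$ followed by the distance-four arc $x_4\rightarrow x_0$, legitimate because $k\geqslant 4$ guarantees $x_4$ exists), then telescope $x_k$ down to $x_3$ by even steps and apply quasi-transitivity once more to $x_k\rightarrow x_3\rightarrow x_0$ to obtain $x_k\rightarrow x_0=x$.

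The main obstacle is precisely this $k\geqslant 4$ analysis: each telescoping step must be justified by a fresh application of quasi-transitivity together with minimality of $P$ to fix the orientation, and the odd-$k$ parity bridge has to be inserted carefully. An alternative I would keep in reserve is strong induction on $\ell(P)$, reducing the pair $(x,y)$ to $(x_2,y)$—whose shortest path is strictly shorter and which still satisfies $x_2\nrightarrow y$—and then recombining the returned alternative with the arc $x_2\rightarrow x_0$. The delicate point there is re-deriving an arc of the form $x_0\rightarrow u$ when the inductive hypothesis only hands back intermediate vertices for $(x_2,y)$, which is exactly why I expect the direct telescoping argument above to be the cleaner route.
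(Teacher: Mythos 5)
The paper gives no proof of this lemma at all: it is quoted verbatim from Bang-Jensen and Gutin \cite{1}, so there is no in-paper argument to compare against. Your proof is correct and is essentially the standard one for quasi-transitive digraphs (a shortest $(x,y)$-path has no forward shortcuts, hence quasi-transitivity forces all distance-two backward arcs, which for length $2$ or length $\geqslant 4$ telescope to $y\rightarrow x$, while length $3$ yields either $y\rightarrow x$ or the $QT_4$ configuration with $u=x_1$, $v=x_2$); all the steps you outline, including the parity correction via $x_3\rightarrow x_0$ for odd $k\geqslant 5$, check out.
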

Let $QT_4$ be a quasi-transitive digraph, which has $V(QT_4)=\{x,y,u,v\}$ and $A(QT_4)=\{(x,u),(u,v),(v,y),(y,u),(v,x)\}$. See Figure 1.

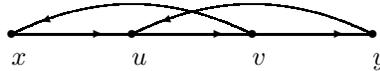
\begin {figure}[h]
\unitlength0.4cm
\begin{center}
\begin{picture}(15,2)
\put(1,1){\circle*{.3}}
\put(5,1){\circle*{.3}}
\put(9,1){\circle*{.3}}
\put(13,1){\circle*{.3}}
\qbezier(1,1)(3,1)(5,1)
\put(4,1){\vector(1,0){.07}}
\qbezier(5,1)(7,1)(9,1)
\put(8,1){\vector(1,0){.07}}
\qbezier(9,1)(11,1)(13,1)
\put(12,1){\vector(1,0){.07}}

\qbezier(13,1)(9,3)(5,1)
\put(6,1.45){\vector(-3,-1){.07}}
\qbezier(9,1)(5,3)(1,1)
\put(2,1.45){\vector(-3,-1){.07}}
\put(1,0){$x$}
\put(5,0){$u$}
\put(9,0){$v$}
\put(13,0){$y$}
\end{picture}
\caption{A quasi-transitive digraph $QT_4$. }
\end{center}
\end{figure}

\begin{lemma}\label{z4.2}
Let $D$ be an $m$-arc-coloured quasi-transitive digraph with all $3$-cycles and all induced subdigraphs $QT_4$ are rainbow in $D$. If for any pair of distinct vertices $x,y\in V(D)$ such that $D$ has a rainbow $(x,y)$-path but no rainbow $(y,x)$-path, then $(x,y)\in A(D)$.
\end{lemma}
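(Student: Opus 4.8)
The plan is to argue by contradiction: assume $(x,y)\notin A(D)$ and manufacture a rainbow $(y,x)$-path, contradicting the hypothesis that none exists. Since $D$ has a rainbow $(x,y)$-path it certainly has an ordinary $(x,y)$-path, and since $x$ does not dominate $y$, Lemma~\ref{z4.1} applies and splits the situation into two cases: either $y\to x$, or there exist $u,v\in V(D)\setminus\{x,y\}$ with $x\to u\to v\to y$ and $y\to u\to v\to x$.

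First I would dispose of the case $y\to x$: the single arc $(y,x)$ is by itself a (trivially) rainbow path, so it is a rainbow $(y,x)$-path and we are done. In particular, in the remaining case we may assume that $x$ and $y$ are non-adjacent. So I would then concentrate on the four-vertex set $\{x,y,u,v\}$, which already carries the arcs $x\to u,\,u\to v,\,v\to y,\,y\to u,\,v\to x$ — exactly the arc set of $QT_4$. The goal is to show $D[\{x,y,u,v\}]$ is precisely $QT_4$, so that the hypothesis forces it to be rainbow; then $(y,u),(u,v),(v,x)$ get pairwise distinct colours and $y\to u\to v\to x$ is the rainbow $(y,x)$-path we want.

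To pin down the induced subdigraph I would rule out the possible ``reverse'' arcs using quasi-transitivity together with the non-adjacency of $x$ and $y$. An arc $x\to v$ would create the length-two path $x\to v\to y$, forcing (by quasi-transitivity) an arc between $x$ and $y$; likewise $y\to v$ (via $y\to v\to x$), $u\to y$ (via $x\to u\to y$), and $u\to x$ (via $y\to u\to x$) each force $x$ and $y$ to be adjacent, a contradiction. This eliminates four of the five reverse arcs, and if the fifth is also absent then $D[\{x,y,u,v\}]=QT_4$, the hypothesis makes it rainbow, and the argument closes.

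The main obstacle is the single remaining reverse arc $v\to u$, which quasi-transitivity does \emph{not} forbid: the length-two paths $v\to x\to u$ and $v\to y\to u$ are already closed by the present arc $u\to v$, so no new arc is forced, and $v\to u$ produces no new $3$-cycle whose rainbow condition I could exploit (indeed the two $3$-cycles $x\to u\to v\to x$ and $y\to u\to v\to y$ only guarantee $C(u,v)\neq C(v,x)$ and $C(y,u)\neq C(u,v)$, not $C(y,u)\neq C(v,x)$). Handling $v\to u$ is therefore the delicate point: here I would invoke the $QT_4$-hypothesis on the $QT_4$-pattern carried by $x\to u\to v\to y,\ y\to u,\ v\to x$, so that the five pattern-arcs — in particular $(y,u),(u,v),(v,x)$ — are forced rainbow even in the presence of the extra symmetric arc $u\leftrightarrow v$, once more yielding the rainbow $(y,x)$-path $y\to u\to v\to x$. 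I expect making this last step airtight — precisely how the rainbow-$QT_4$ condition should be read so as to absorb the symmetric arc $v\to u$ — to be where the real care is needed.
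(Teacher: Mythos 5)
Your proposal follows the paper's proof step for step: assume $(x,y)\notin A(D)$, observe $x,y$ are non-adjacent (else $(y,x)$ would be a rainbow $(y,x)$-path), apply Lemma~\ref{z4.1} to get $u,v$ with $x\rightarrow u\rightarrow v\rightarrow y$ and $y\rightarrow u\rightarrow v\rightarrow x$, and extract the rainbow path $(y,u,v,x)$ from a rainbow $QT_4$ on $\{x,u,v,y\}$. Your elimination of the reverse arcs $u\rightarrow x$, $x\rightarrow v$, $u\rightarrow y$, $y\rightarrow v$ via quasi-transitivity and the non-adjacency of $x$ and $y$ is correct, and is in fact more careful than the paper, which simply asserts that $D[x,u,v,y]$ is $QT_4$.

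However, the step you flag as delicate is a genuine gap, and it cannot be closed the way you suggest: the hypothesis concerns \emph{induced} subdigraphs isomorphic to $QT_4$, and if $(v,u)\in A(D)$ then $D[\{x,u,v,y\}]$ has six arcs and is not such a subdigraph, so the hypothesis is silent about the ``pattern''. Concretely, take $V=\{x,y,u,v\}$ with arcs $x\rightarrow u$, $u\rightarrow v$, $v\rightarrow u$, $v\rightarrow y$, $y\rightarrow u$, $v\rightarrow x$ and colours $C(x,u)=1$, $C(u,v)=2$, $C(v,x)=C(y,u)=3$, $C(v,y)=4$, $C(v,u)=5$. This digraph is quasi-transitive, its only $3$-cycles $(x,u,v,x)$ and $(y,u,v,y)$ are rainbow, it contains no induced $QT_4$, and $(x,u,v,y)$ is a rainbow $(x,y)$-path; yet the unique $(y,x)$-path $(y,u,v,x)$ has colours $3,2,3$. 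The two available rainbow $3$-cycles only give $C(y,u)\neq C(u,v)$ and $C(u,v)\neq C(v,x)$, never $C(y,u)\neq C(v,x)$, so no ``absorption'' of the symmetric arc is possible. Note that the paper's own proof has exactly the same hole --- it never rules out $(v,u)\in A(D)$ --- and both arguments become valid only if the hypothesis is read (or restated) as requiring every, not necessarily induced, copy of $QT_4$ to be rainbow.
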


\begin{proof}
Suppose to the contrary that $(x,y)\notin A(D)$. Since there exists no rainbow $(y,x)$-path, then $x,y$ are non-adjacent. Let $P=(x=x_0,x_1,x_2,\ldots,x_n=y)\subseteq D$ be a rainbow $(x,y)$-path. By Lemma \ref{z4.1}, there exist vertices $u,v\in V(D)\setminus\{x,y\}$ such that $x\rightarrow u\rightarrow v\rightarrow y$ and $y\rightarrow u\rightarrow v\rightarrow x$. This implies that $D[x,u,v,y]$ is $QT_4$ which is rainbow. It follows $(y,u,v,x)$ is a rainbow $(y,x)$-path, a contradiction. Thus $(x,y)\in A(D)$.
\end{proof}

\begin{theorem}\label{z4.3}
Let $D$ be an $m$-arc-coloured quasi-transitive digraph with all $3$-cycles and all induced subdigraphs $QT_4$ are rainbow in $D$. Then $C_r(D)$ is a KP-digraph.
\end{theorem}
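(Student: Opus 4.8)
The plan is to invoke Theorem \ref{z1.2}: since that result guarantees that a digraph in which every cycle has a symmetrical arc is a KP-digraph, it suffices to prove that every cycle of $C_r(D)$ contains at least one symmetrical arc. I would argue by contradiction and fix a cycle $C'=(z_0,z_1,\ldots,z_{k-1},z_0)$ of $C_r(D)$ having no symmetrical arc. The first step is to locate all of $C'$ inside $D$ itself: for each arc $(z_i,z_{i+1})$ of $C'$, the absence of a symmetrical arc means $(z_{i+1},z_i)\notin A(C_r(D))$, i.e.\ there is no rainbow $(z_{i+1},z_i)$-path, while a rainbow $(z_i,z_{i+1})$-path exists because $(z_i,z_{i+1})\in A(C_r(D))$; hence Lemma \ref{z4.2} forces $(z_i,z_{i+1})\in A(D)$. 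Thus $C'$ is genuinely a cycle of the quasi-transitive digraph $D$, and since a length-$2$ cycle is automatically symmetrical we may assume $k\ge 3$.

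The heart of the argument is a ``forward chord'' propagation driven by quasi-transitivity together with the rainbow-triangle hypothesis. I would prove the following claim: if $(z_0,z_i)\in A(D)$ and $(z_i,z_{i+1})$ is an arc of $C'$ with $1\le i\le k-2$, then $(z_0,z_{i+1})\in A(D)$. Indeed, quasi-transitivity applied to $z_0\rightarrow z_i\rightarrow z_{i+1}$ yields $(z_0,z_{i+1})\in A(D)$ or $(z_{i+1},z_0)\in A(D)$. In the latter case $z_0 z_i z_{i+1} z_0$ is a $3$-cycle of $D$, which is rainbow by hypothesis, so the subpath $z_{i+1}\rightarrow z_0\rightarrow z_i$ is a rainbow $(z_{i+1},z_i)$-path; this puts $(z_{i+1},z_i)$ into $A(C_r(D))$ and makes the arc $(z_i,z_{i+1})$ of $C'$ symmetrical, a contradiction. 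Starting from the arc $(z_0,z_1)\in A(D)$ and iterating the claim along $C'$ produces $(z_0,z_2),(z_0,z_3),\ldots,(z_0,z_{k-1})\in A(D)$. Finally, pairing $(z_0,z_{k-1})$ with the closing arc $(z_{k-1},z_0)\in A(D)$ of $C'$ shows that $(z_{k-1},z_0)$ is symmetrical in $C_r(D)$, contradicting the choice of $C'$.

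The main obstacle, and the reason the rainbow hypotheses are needed, is ensuring that every chord the argument touches actually lives in $A(D)$ rather than merely in $A(C_r(D))$: this is exactly what Lemma \ref{z4.2} (which in turn consumes the rainbow $QT_4$'s) buys at the outset, and what the rainbow $3$-cycle condition buys at each propagation step. I would also dispatch the small and degenerate cases ($k=2$ and $k=3$) directly, and verify the pairwise distinctness of $z_0,z_i,z_{i+1}$ needed to call $z_0 z_i z_{i+1} z_0$ a genuine $3$-cycle. Once every cycle of $C_r(D)$ is shown to carry a symmetrical arc, Theorem \ref{z1.2} yields that $C_r(D)$ is a KP-digraph.
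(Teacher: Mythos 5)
Your proof is correct and follows essentially the same route as the paper's: reduce to Theorem \ref{z1.2}, use Lemma \ref{z4.2} to place a putative symmetrical-arc-free cycle of $C_r(D)$ entirely inside $D$, and then use quasi-transitivity together with rainbow $3$-cycles to manufacture a symmetrical arc. The only cosmetic difference is that you propagate the forward chords $(z_0,z_i)$ inductively along the cycle, whereas the paper examines the chord between $u_1$ and $u_{\ell-1}$ and, in the harder case, takes the minimal $i_0$ with $(u_{i_0},u_1)\in A(D)$; both devices yield the same rainbow $3$-cycle contradiction.
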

\begin{proof}Suppose to the contrary that $C_r(D)$ is not a \emph{KP}-digraph. By Theorem \ref{z1.2}, there exists a cycle with no symmetrical arc. Let $C=(u_1,u_2,\ldots,u_{\ell},u_1)$ be a shortest cycle with no symmetrical arc in $C_r(D)$. We will get a contradiction by showing that $C$ has a symmetrical arc.

\vskip 2mm
\noindent{\bf Claim 1. }$C\subseteq D$ and $\ell\geqslant 5$.

\begin{proof}Since $C$ has no symmetrical arc, for each $i\in \{1,2,\ldots,\ell\}$, there exists a rainbow $(u_i,u_{i+1})$-path and no rainbow $(u_{i+1},u_i)$-path in $D$. By Lemma \ref{z4.2}, we have $(u_i,u_{i+1})\in A(D)$. Then $C\subseteq D$.

Now we prove $\ell\geqslant 5$. Since $C$ has no symmetrical arc, we have $\ell\geqslant3$.

If $\ell=3$, combining with $C\subseteq D$, we have $C=(u_1,u_2,u_3,u_1)$ is a rainbow $3$-cycle in $D$. This implies that $(u_2,u_3,u_1)$ is a rainbow $(u_2,u_1)$-path and hence $(u_2,u_1)\in A(C_r(D))$. Note that $(u_1,u_2)\in A(C)$, which contradicts that $C$ has no symmetrical arc.

If $\ell=4$, by the proof above, we have $C=(u_1,u_2,u_3,u_4,u_1)$ is a cycle in $D$. Since $D$ is quasi-transitive, we have $u_1,u_3$ are adjacent. If $(u_1,u_3)\in A(D)$, then $(u_1,u_3,u_4,u_1)$ is a rainbow $3$-cycle. This implies that $(u_4,u_1,u_3)$ is a rainbow $(u_4,u_3)$-path and hence $(u_4,u_3)\in A(C_r(D))$. Note that $(u_3,u_4)\in A(C)$, which contradicts that $C$ has no symmetrical arc. If $(u_3,u_1)\in A(D)$, then $(u_1,u_2,u_3,u_1)$ is a rainbow $3$-cycle. This implies that $(u_2,u_3,u_1)$ is a rainbow $(u_2,u_1)$-path and hence $(u_2,u_1)\in A(C_r(D))$. Note that $(u_1,u_2)\in A(C)$, which contradicts that $C$ has no symmetrical arc.

Thus $\ell\geqslant5$.\end{proof}

By Claim 1, we have $\ell-1\geqslant4$ and $C\subseteq D$. Considering $\{(u_{\ell-1},u_{\ell}),(u_{\ell},u_1)\}\subseteq A(D)$, we have $u_1,u_{\ell-1}$ are adjacent.

If $(u_1,u_{\ell-1})\in A(D)$, then $(u_{\ell-1},u_{\ell},u_1,u_{\ell-1})$ is a rainbow $3$-cycle. This implies that $(u_{\ell},u_1,u_{\ell-1})$ is a rainbow $(u_{\ell},u_{\ell-1})$-path and hence $(u_{\ell},u_{\ell-1})\in A(C_r(D))$. Note that $(u_{\ell-1},u_{\ell})\in A(C)$, which contradicts that $C$ has no symmetrical arc.

If $(u_{\ell-1},u_{1})\in A(D)$, since $(u_1,u_2)\in A(D)$, there exists $i\in \{3,4,\ldots,\ell-1\}$ such that $(u_i,u_1)\in A(D)$.
Let$$i_0=\mbox{min}\{i\in \{3,4,\ldots,\ell-1\}\,|\,(u_i,u_1)\in A(D)\}.$$
Considering $\{(u_{i_0-1},u_{i_0}),(u_{i_0},u_1)\}\subseteq A(D)$, we have $u_1,u_{i_0-1}$ are adjacent. By the choice of $i_0$, we have $(u_1,u_{i_0-1})\in A(D)$. It follows $(u_1,u_{i_0-1},u_{i_0},u_1)$ is a rainbow $3$-cycle. This implies that $(u_{i_0},u_1,u_{i_0-1})$ is a rainbow $(u_{i_0},u_{i_0-1})$-path and hence $(u_{i_0},u_{i_0-1})\in A(C_r(D))$. Note that $(u_{i_0-1},u_{i_0})\in A(C)$, which contradicts that that $C$ has no symmetrical arc.

Thus, $C_r(D)$ is a \emph{KP}-digraph.
\end{proof}

By Observation \ref{z1.4} and Theorem \ref{z4.3}, the following corollary is direct.

\begin{corollary}\label{z4.4}
Let $D$ be an $m$-arc-coloured quasi-transitive digraph with all $3$-cycles and all induced subdigraphs  $QT_4$ are rainbow in $D$. Then $D$ has a RP-kernel.
\end{corollary}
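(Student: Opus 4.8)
The plan is to combine the two preceding results, Theorem \ref{z4.3} and Observation \ref{z1.4}, with the defining property of a KP-digraph; no new argument is really needed. Since the hypotheses of the corollary are verbatim those of Theorem \ref{z4.3}, the first step is to invoke that theorem to conclude that the rainbow closure $C_r(D)$ is a KP-digraph.

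Next I would unwind the definition of a KP-digraph. By definition, every induced subdigraph of $C_r(D)$ has a kernel; in particular $C_r(D)$ is an induced subdigraph of itself, so $C_r(D)$ has a kernel. This is the only place where a small remark is needed beyond citing the earlier statements, and it is immediate from the definition.

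Finally, I would apply Observation \ref{z1.4}, which asserts that $D$ has a RP-kernel if and only if $C_r(D)$ has a kernel. Feeding the kernel of $C_r(D)$ obtained above into this equivalence yields at once a RP-kernel of $D$, which completes the argument.

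I do not expect any genuine obstacle here, since all the substantive work has already been carried out in Theorem \ref{z4.3}: its proof established the KP-property by showing, via Theorem \ref{z1.2}, that every cycle in $C_r(D)$ possesses a symmetrical arc, exploiting that all $3$-cycles and all induced $QT_4$ subdigraphs are rainbow together with the structural Lemmas \ref{z4.1} and \ref{z4.2}. The corollary is therefore a purely formal consequence, and the only thing to ``see'' is that a KP-digraph has a kernel (being an induced subdigraph of itself) and that Observation \ref{z1.4} transports this conclusion from the closure $C_r(D)$ back to $D$ itself.
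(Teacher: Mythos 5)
Your proposal is correct and follows exactly the route the paper takes: the paper derives the corollary directly from Theorem \ref{z4.3} and Observation \ref{z1.4}, and your extra remark that a KP-digraph has a kernel because it is an induced subdigraph of itself is the same (implicit) step. Nothing is missing.
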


%%%%%%%%%%%%%%%%%%%%%%%%%%%%%%%%%%%%%%%%%%%%%%%%%%%%%%%%%%%%%%%%%%%%%%%%%%%%%%%%%%%%%%%%%%%%%%%%%%%%%%%%%%%%%
\section{Bipartite tournaments}
A digraph $D$ is a \emph{bipartite tournament} if there exists a partition of $V(D)$ into two sets $\{X,Y\}$ such that there exists no arc between any two vertices in the same set and there exists an arc between any two vertices in different sets. In this section, we consider the sufficient conditions for the existence of a RP-kernel in an arc-coloured bipartite tournament. We begin with two simple observations.

\begin{observation}\cite{11}\label{z3.6}
Let $D$ be an $m$-arc-coloured digraph and $v\in V(D)$ a source. Then $D$ has a RP-kernel if and only if $D-y$ has a RP-kernel.
\end{observation}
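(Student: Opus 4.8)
The plan is to exploit the defining feature of a source: since $v$ has in-degree $0$ in $D$, no arc enters $v$, so $v$ can never occur as an internal vertex or as the terminal vertex of any path in $D$, and the only paths meeting $v$ are those starting at $v$. In particular, for any two vertices $a,b\in V(D)\setminus\{v\}$, every $(a,b)$-path avoids $v$ and therefore lies in $D-v$, so there is a rainbow $(a,b)$-path in $D$ if and only if there is one in $D-v$. I would record this equivalence first, as it is used in both directions (throughout, $D-v$ denotes the deletion of the source $v$).

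For the forward direction, suppose $S$ is a RP-kernel of $D$ and set $S'=S\setminus\{v\}$. I would check that $S'$ is a RP-kernel of $D-v$, treating the cases $v\in S$ and $v\notin S$ uniformly. Condition $(a)$ is immediate: any two distinct vertices of $S'$ lie in $V(D)\setminus\{v\}$ and admit no rainbow path in $D$, hence none in $D-v$ by the equivalence above. For condition $(b)$, a vertex $w\in V(D-v)\setminus S'$ satisfies $w\notin S$ and $w\neq v$; since $S$ is a RP-kernel of $D$, the vertex $w$ reaches $S$ by a rainbow path, and because $w\neq v$ this path avoids the source $v$, so it lies in $D-v$ and reaches $S'=S\setminus\{v\}$.

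For the converse, suppose $S'$ is a RP-kernel of $D-v$. Condition $(a)$ transfers back to $D$ exactly as above. The delicate point, and the main obstacle, is the source $v$ itself: I would split on whether $v$ can reach $S'$ by a rainbow path in $D$. If it can, then $S'$ is already a RP-kernel of $D$, since $v$ is then absorbed by $S'$ and every other vertex outside $S'$ reaches $S'$ through $D-v\subseteq D$. If it cannot, then I would instead take $S'\cup\{v\}$: condition $(b)$ is inherited from $D-v$ for all vertices distinct from $v$, and condition $(a)$ holds for each new pair $\{v,s\}$ with $s\in S'$ because there is no rainbow path from $v$ to $s$ (the case assumption) and no path at all from $s$ to $v$ (as $v$ is a source).

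The crux of the argument is this last case distinction in the converse direction: the status of $v$ in the kernel is not determined in advance and must be decided according to whether $v$ reaches $S'$ by a rainbow path. Everything else reduces to the single structural fact that a source is never reachable and never internal to a path, which makes rainbow reachability between the remaining vertices insensitive to the deletion of $v$.
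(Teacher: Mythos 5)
The paper states this observation without proof (it is quoted from \cite{11}), so there is no in-paper argument to compare against; your proof is correct and is the standard one. The single structural fact you isolate --- a source is never an internal vertex nor the terminus of any nontrivial path, so rainbow reachability among the remaining vertices is unaffected by deleting it --- together with the case split on whether $v$ reaches $S'$ by a rainbow path settles both directions completely, including the possibility $v\in S$ in the forward direction. You also correctly read the statement's ``$D-y$'' as ``$D-v$'', which is a typo in the paper.
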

\begin{observation}\label{z3.1}
$1$-arc-coloured bipartite tournament $D=(X,Y)$ has a RP-kernel.
\end{observation}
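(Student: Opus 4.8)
The plan is to reduce the statement to the existence of an ordinary kernel. Since $D$ carries only one colour, any path of length at least $2$ uses two arcs of the same colour and hence is not rainbow; thus the only rainbow $(u,v)$-paths joining distinct vertices are single arcs. Consequently the rainbow closure satisfies $C_r(D)=D$, so a RP-kernel of $D$ is exactly a kernel of $D$. In particular, by Observation~\ref{z1.4} it suffices to prove that every bipartite tournament $D=(X,Y)$ has a kernel.

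I would establish the latter by induction on $|V(D)|$, the empty digraph having the empty kernel. First I would split into two cases according to whether $D$ has a sink. If every vertex of $D$ has out-degree at least $1$, then in particular every $y\in Y$ dominates at least one vertex, and since $D$ is bipartite all out-neighbours of $y$ lie in $X$; hence every vertex of $Y$ is absorbed by $X$. As $X$ is an independent set (there are no arcs inside a part), $X$ itself is a kernel, and no induction is needed in this case. Note that the absence of a sink also forces $X\neq\emptyset$, so this kernel is well defined.

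If instead $D$ has a sink $v$ (a vertex with $d_D^+(v)=0$), then $v$ must belong to every kernel, so I would place $v$ in the kernel, delete $v$ together with $N_D^-(v)$, and apply the induction hypothesis to the induced bipartite tournament $D^{*}=D-v-N_D^-(v)$ to obtain a kernel $S^{*}$. I would then argue that $S=S^{*}\cup\{v\}$ is a kernel of $D$: every vertex of $N_D^-(v)$ is absorbed by $v$, every vertex of $V(D^{*})\setminus S^{*}$ is absorbed by $S^{*}$, and $S$ is independent because $v$ has no out-neighbour while all of its in-neighbours were deleted, so $v$ is non-adjacent to $S^{*}$.

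The verifications in both cases are routine; the only point that needs care is the independence check in the sink case, where one must confirm that deleting $N_D^-(v)$ leaves $v$ non-adjacent to the kernel produced by induction. The conceptual heart of the argument—and the step I would emphasize—is the observation that a positive minimum out-degree already forces one part of the bipartition to be a kernel, which is precisely what lets the induction terminate cheaply.
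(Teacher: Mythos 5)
Your proof is correct, and its first step coincides with the paper's: since only single arcs can be rainbow under one colour, $C_r(D)=D$ and a RP-kernel is just a kernel (the paper states this more tersely as ``a kernel of $D$ is also a RP-kernel'', which is valid exactly for this reason). Where you genuinely diverge is in proving that every bipartite tournament has a kernel. You run an induction on $|V(D)|$, deleting a sink $v$ together with $N_D^-(v)$ and handling the sink-free case by noting that $X$ then absorbs all of $Y$. The paper instead observes in two lines that one of the two parts is always a kernel: $X$ is independent, and if it fails to be absorbing then some $y\in Y$ satisfies $X\Rightarrow y$, whence every vertex of $X$ has an out-neighbour in $Y$ and $Y$ is a kernel. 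The paper's argument is shorter and names an explicit kernel; yours is the standard sink-deletion scheme (essentially the proof of Theorem~\ref{z1.1}), which is more machinery than needed here but is sound --- the deleted subdigraph is still a bipartite tournament, and the independence check you flag does go through since $v$ loses all its in-neighbours. Note that your Case 1 already contains the paper's dichotomy in disguise: if some $y\in Y$ has out-degree $0$ then $X\Rightarrow y$ and $Y$ itself is a kernel, which would have let you avoid the induction altogether.
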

\begin{proof} Obviously, a kernel of $D$ is also a RP-kernel of $D$. We claim that either $X$ or $Y$ is a kernel of $D$ and hence a RP-kernel of $D$. If $X$ is not a kernel, then there exists $v\in Y$ such that $X\Rightarrow v$. This implies that $Y$ is a kernel of $D$.
\end{proof}
In the following, we may assume $m\geqslant2$.

\begin{observation}
Let $D=(X,Y)$ be an $m$-arc-coloured bipartite tournament with $\min\{|X|,|Y|\}=1$. Then $D$ has a RP-kernel.
\end{observation}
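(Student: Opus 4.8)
The plan is to reduce the problem to the kernel of the rainbow closure via Observation \ref{z1.4}, exploiting the fact that a single-vertex side severely limits the length of rainbow paths. First I would note that the two sides of a bipartite tournament play symmetric roles, so I may assume $Y=\{y\}$ is the singleton side. Writing $A=N_D^-(y)$ and $B=N_D^+(y)$, every vertex of $X$ is adjacent to $y$ and hence lies in $A\cup B$, with $A\cap B=\emptyset$ since a bipartite tournament has no $2$-cycle. The decisive structural remark is that any path in $D$ must alternate between $X$ and $Y$, and since $Y=\{y\}$ has only one vertex, no path can pass through $y$ twice; consequently every path of $D$ has length at most $2$, and the only paths of length $2$ are of the form $x_i\rightarrow y\rightarrow x_j$ with $x_i\in A$ and $x_j\in B$.

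Next I would describe $C_r(D)$ explicitly. Its arc set consists of the original arcs $x_i\rightarrow y$ (for $x_i\in A$) and $y\rightarrow x_j$ (for $x_j\in B$), together with an arc $x_i\rightarrow x_j$ for each rainbow path $x_i\rightarrow y\rightarrow x_j$, that is, whenever $x_i\in A$, $x_j\in B$ and $C(x_i,y)\neq C(y,x_j)$. Because there are no rainbow paths of length $\ge 3$, these are all the arcs of $C_r(D)$. From this description every vertex of $B$ is a sink and every vertex of $A$ is a source in $C_r(D)$: a $B$-vertex has no out-arc in $D$ and thus none in $C_r(D)$, while no arc of $C_r(D)$ can enter an $A$-vertex. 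Hence any cycle of $C_r(D)$ would have to avoid both $A$ and $B$ and live inside $\{y\}$, which is impossible. Therefore $C_r(D)$ has no cycle, and by Theorem \ref{z1.1} it has a kernel; Observation \ref{z1.4} then yields a RP-kernel of $D$.

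The genuinely essential point, and the only place that needs care, is the claim that $C_r(D)$ is acyclic, which rests entirely on the length bound for rainbow paths forced by $|Y|=1$; everything else is bookkeeping. I expect no real obstacle beyond stating this cleanly, but I would double-check the degenerate cases to be safe: if $B=\emptyset$ then $y$ is a sink and $\{y\}$ is already a RP-kernel, while if $A=\emptyset$ then $y$ is a source and one may instead invoke Observation \ref{z3.6} to delete $y$ and take a kernel of the arcless remainder $X$. A more hands-on alternative, avoiding the closure, would be to build $S$ directly by putting all of $B$ (the sinks, which are forced into any RP-kernel) into $S$ together with those $x_i\in A$ for which $x_i\rightarrow y\rightarrow x_j$ is non-rainbow for every $x_j\in B$; but this route requires a short case analysis on whether the colours $\{C(y,x_j):x_j\in B\}$ are all equal, so the acyclicity argument is preferable.
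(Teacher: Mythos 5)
Your proof is correct and follows essentially the same route as the paper: establish that $C_r(D)$ is acyclic and then invoke Theorem \ref{z1.1} together with Observation \ref{z1.4}. The paper gets the acyclicity in one line (a cycle in a bipartite digraph must visit the singleton side twice, so $D$ itself is acyclic and hence so is its rainbow closure), whereas you reconstruct $C_r(D)$ explicitly; the extra bookkeeping is harmless but not needed.
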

\begin{proof}
W.l.o.g., assume $|X|=\min\{|X|,|Y|\}=1$. Obviously, $D$ has no cycle. This implies that $C_r(D)$ has also no cycle. By Theorem \ref{z1.1}, $C_r(D)$ has a unique kernel. By Observation \ref{z1.4}, $D$ has a RP-kernel.
\end{proof}
\begin{theorem}\label{z3.2}
Let $D=(X,Y)$ be an $m$-arc-coloured bipartite tournament with $\min\{|X|,|Y|\}=2$. If every $4$-cycle contained in $D$ is coloured with at least 3 colours, then $D$ has a RP-kernel.
\end{theorem}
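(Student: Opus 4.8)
The plan is to reduce, via Observation \ref{z1.4}, to showing that $C_r(D)$ has a kernel, and to exploit that $\min\{|X|,|Y|\}=2$ forces the whole digraph to be organised around only two ``hub'' vertices. Assume without loss of generality $|X|=2$, say $X=\{x_1,x_2\}$, and classify each $y\in Y$ by its adjacencies to the two hubs into four types: $Y_A$ (with $x_1\to y$ and $x_2\to y$), $Y_B$ (with $y\to x_1$ and $y\to x_2$), $Y_C$ (with $x_1\to y\to x_2$) and $Y_D$ (with $x_2\to y\to x_1$). Two structural facts drive everything. First, since a cycle must alternate between $X$ and $Y$ and $|X|=2$, every cycle of $D$ is a $4$-cycle of the form $x_1\to y_c\to x_2\to y_d\to x_1$ with $y_c\in Y_C$ and $y_d\in Y_D$; these are precisely the $4$-cycles the hypothesis colours with at least three colours. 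Second, the vertices of $Y_B$ are sources and those of $Y_A$ are sinks of $D$.

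First I would prune the easy vertices. Each $y\in Y_B$ is a source, so by Observation \ref{z3.6} it may be deleted without affecting the existence of a RP-kernel; deleting sources repeatedly I may assume $Y_B=\emptyset$ (if this empties a hub, so that $\min\{|X|,|Y|\}$ drops below $2$, an earlier observation already applies, since no new $Y$-source can be created). Dually, each $y\in Y_A$ is a sink of $D$, hence a vertex with no out-arc in $C_r(D)$; such vertices lie in \emph{every} kernel of $C_r(D)$, and having no out-arcs they are automatically pairwise non-adjacent, so I place all of $Y_A$ into the kernel $S$ at the outset. Because $x_1\to y_a$ and $x_2\to y_a$ for every $y_a\in Y_A$, both hubs are then absorbed by $S$ whenever $Y_A\neq\emptyset$. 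This leaves the core problem: with $Y=Y_C\cup Y_D$ and the two hubs, decide rainbow reachability among $\{x_1,x_2\}\cup Y_C\cup Y_D$ and complete $S$.

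For the core, I would compute the rainbow reachability into the hubs. A vertex of $Y_D$ reaches $x_1$ directly, while a vertex $y_c\in Y_C$ reaches $x_1$ only through a length-$3$ path $y_c\to x_2\to y_d\to x_1$, which is rainbow exactly when its three colours are distinct; the $\geq 3$-colour condition on the corresponding $4$-cycle is what I would use to force such a path to exist for a suitable $y_d$. The construction then splits into two cases: (i) if one hub, say $x_i$, is reachable by a rainbow path from every other vertex, then $\{x_i\}$ is a kernel of $C_r(D)$ (this case is disjoint from having placed sinks, since a nonempty $Y_A$ already absorbs the hubs); (ii) otherwise I would locate two rainbow-incomparable vertices of the same type — e.g.\ two vertices $y_c,y_c'\in Y_C$ with no rainbow path between them — and show, again via the colour condition, that every other vertex has a rainbow path to one of them, so that they form an independent absorbing pair. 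A small explicit example (two $Y_C$-vertices and one $Y_D$-vertex, with colours chosen so that each $Y_C$-vertex fails to reach $x_1$ and the $Y_D$-vertex fails to reach $x_2$) shows that case (ii) genuinely occurs and that a single hub need not suffice, while $\{y_c,y_c'\}$ is a kernel.

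The main obstacle is that Theorem \ref{z1.2} is \emph{not} available here: the $\geq 3$-colour hypothesis does not make every cycle of $C_r(D)$ carry a symmetric arc — $C_r(D)$ can contain odd cycles and cycles with no symmetric arc even when a RP-kernel exists — so the kernel cannot be obtained by the kernel-perfect route used for quasi-transitive digraphs. The real work is therefore the reachability bookkeeping of case (ii): proving that whenever neither hub is reachable from all vertices, the failure is localised to one side (the $Y_C$-vertices cannot reach $x_1$ and the $Y_D$-vertices cannot reach $x_2$), and that the $\geq 3$-colour condition then guarantees the complementary long rainbow paths through both hubs that are needed for an independent absorbing set. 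Verifying condition ($a$) (independence) and condition ($b$) (absorption) simultaneously across the resulting subcases is where essentially all the effort lies.
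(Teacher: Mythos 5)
Your set-up coincides with the paper's: your $Y_A$, $Y_B$, $Y_C$, $Y_D$ are its $Y_0$, the sources removed via Observation \ref{z3.6}, and the sets $Y'$, $Y''$ of its Case 2, and the observation that all of $Y_A$ must enter the kernel is exactly how its Case 1 begins. But the proposal stops where the proof starts, and the two shortcuts you offer for the main cases do not go through. First, the hypothesis that the $4$-cycle $x_1\rightarrow y_c\rightarrow x_2\rightarrow y_d\rightarrow x_1$ uses at least $3$ colours does \emph{not} force the path $(y_c,x_2,y_d,x_1)$ to be rainbow for some suitable $y_d$: if $C(y_c,x_2)=\beta$ and every $y_d$ has $\beta\in\{C(x_2,y_d),C(y_d,x_1)\}$, the $4$-cycle condition can still hold while $y_c$ never reaches $x_1$. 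This is precisely the configuration the paper's Subcase 2.1 isolates (e.g.\ $Y''=Y''_{\beta\gamma}\cup Y''_{\gamma\beta}\cup Y''_{\beta\omega}\cup Y''_{\omega\beta}$), and there the kernel is not a hub. Second, your case (ii) asserts that one can then find an ``independent absorbing pair''; but the vertices that fail to reach a hub can form a large class that is pairwise rainbow-unreachable (e.g.\ $Y'_{\omega\beta}\cup Y'_{\alpha\beta}$ in the paper, all of whose members have $C(y',x_2)=\beta$), so no two of them can absorb the rest of the class, and the kernel the paper exhibits is the whole class, or sets such as $Y_0\cup Q_2\cup\{r\}$ or $Y'_{\omega\beta}\cup Y'_{\omega\gamma}\cup U$. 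Identifying these sets requires tracking which of the colours $\alpha,\beta,\gamma$ occur on each of the arcs joining each $y$ to $x_1$ and $x_2$, which is the bulk of the paper's argument.

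In addition, when $Y_A\neq\emptyset$ your ``core problem'' silently drops $Y_A$: a vertex added to the kernel must not only be rainbow-unreachable from the other added vertices and absorb everything else, it must also have no rainbow path \emph{into} $Y_A$ (independence in $C_r(D)$). Showing that the set of such vertices ($Y_2$ in the paper) admits a suitable independent absorbing subset is itself delicate; it rests on the fact that if $x_i$ has an in-neighbour in $Y_2$ then all arcs from $x_i$ to $Y_0$ carry one common colour, which also equals the colour of every arc from $Y_2$ into $x_i$ (the paper's Claims 3 and 4), and on the subsequent analysis of the sets $S$, $R$, $Q_1$, $Q_2$. None of this appears in the proposal, and you yourself locate ``essentially all the effort'' in the unexecuted bookkeeping. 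As it stands this is a correct plan whose proof is missing, with the two concrete claims made about how the cases would resolve being false or unsubstantiated.
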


\begin{proof}
W.l.o.g., assume $\min\{|X|,|Y|\}=|X|=2$ and $X=\{x_1,x_2\}$. If $X\Rightarrow Y$, then $Y$ is a RP-kernel of $D$. If $Y\Rightarrow X$, then $X$ is a RP-kernel of $D$. So we assume $X\nRightarrow Y$ and $Y\nRightarrow X$. Let
$$Y_0=\{y\in Y\,|\,X\Rightarrow y\},$$
$$Y_1=\{y\in Y\setminus Y_0\,|\,\mbox{there exists a rainbow }(y,Y_0)\mbox{-path in } D\},$$
$$Y_2= Y\setminus(Y_0\cup Y_1).$$

By Observation \ref{z3.6}, we also assume that $D$ has no source in $Y$. By the definition of $Y_0$, the following claim holds directly.
\vskip 2mm
\noindent {\bf Claim 1.} Each vertex in $Y\setminus Y_0=Y_1\cup Y_2$ has exactly one out-neighbour and one in-neighbour in $X$.
\vskip 2mm
We consider the following two cases.
\vskip 2mm
\noindent\emph{Case 1.} $Y_0\neq\emptyset$.

\vskip 2mm

Note that there exists no rainbow $(y,Y_0)$-path for any $y\in Y_2$. If $Y_2=\emptyset$, then $Y_0$ is a RP-kernel of $D$. So we assume that $Y_2\neq \emptyset$. Clearly, the following claim holds directly.

\vskip 2mm
\noindent {\bf Claim 2.} There exist rainbow paths from $X\cup Y_1$ to $Y_0$; there exists no rainbow path from $Y_0$ to $Y\setminus Y_0$ and there exists no rainbow path from $Y_2$ to $Y_0$.

\vskip 2mm
\noindent {\bf Claim 3.} For some $x_i\in X$, if $x_i$ has an in-neighbour in $Y_2$, then all arcs from $x_i$ to $Y_0$ are assigned the common colour.
\begin{proof} Suppose to the contrary that $C(x_i,y_1)\neq C(x_i,y_2)$ for some $y_1,y_2\in Y_0$. Let $y\in Y_2$ with $y\rightarrow x_i$. Since $C(x_i,y_1)\neq C(x_i,y_2)$, $y$ can reach $Y_0$ by a rainbow path passing through $(y,x_i)$ as well as either $(x_i,y_1)$ or $(x_i,y_2)$. This contradicts that there exists no rainbow $(y,Y_0)$-path for any $y\in Y_2$. Thus all arcs from $x_i$ to $Y_0$ are assigned the common colour.
\end{proof}
For convenience, we will denote the common colour assigned the arcs from $x_i$ to $Y_0$ by $C(x_i,Y_0)$ for $x_i\in X$ with an in-neighbour in $Y_2$. By the definition of $Y_2$, the following claim holds directly.

\vskip 2mm
\noindent {\bf Claim 4.} For any $y\in Y_2$ with $y\rightarrow x_i$ for some $x_i\in X$, $C(y,x_i)=C(x_i,Y_0)$.
\vskip 2mm

Let $S\subseteq Y_2$ be the maximal subset such that there exists no rainbow path for any pair of vertices of $S$ in $D$. Let
$$R=\{r\in Y_2\setminus S\,|\,\mbox{there exists no rainbow }(r,S)\mbox{-path in } D\}.$$

If $R=\emptyset$, then $Y_0\cup S$ is a RP-kernel of $D$. Assume that $R\neq \emptyset$ and let $r\in R\subseteq Y_2$ be arbitrary. By Claim $1$, w.l.o.g., we assume $$x_1\rightarrow r\rightarrow x_2.$$
By the choice of $S$, there exists a rainbow $(s,r)$-path $P$ for some $s\in S$ in $D$.

If $\ell(P)=4$, w.l.o.g., assume $P=(s,x_2,y,x_1,r)$ where $y\in Y\setminus Y_0$. It is clear that $C(x_1,r)\neq C(x_1,Y_0)$, since otherwise, in the rainbow path $P$, we replace the arc $(x_1,r)$ with $(x_1,y_0)$ for any $y_0\in Y_0$ and get a rainbow $(s,y_0)$-path, which contradicts $s\in Y_2$. Now we claim that $Y_0\cup\{r\}$ is a RP-kernel of $D$. By Claim 2, it is sufficient to show that there exists a rainbow $(z,r)$-path for any $z\in Y_2\setminus \{s,r\}$. By Claim $1$,  we have either $z\rightarrow x_1$ or $z\rightarrow x_2$. If $z\rightarrow x_1$, by Claim $4$, we have $C(z,x_1)=C(x_1,Y_0)$. Combining with $C(x_1,r)\neq C(x_1,Y_0)$, we have $(z,x_1,r)$ is a rainbow $(z,r)$-path. If $z\rightarrow x_2$, by Claim $4$, we have $C(z,x_2)=C(s,x_2)=C(x_2,Y_0)$. In the rainbow path $P$, we replace the arc $(s,x_2)$ with $(z,x_2)$ and get a rainbow $(z,r)$-path $(z,x_2,y,x_1,r)$. This implies that $Y_0\cup\{r\}$ is a RP-kernel of $D$.

If $\ell(P)=2$, now $(s,x_1,r)$ is the rainbow $(s,r)$-path. Note that $s\in S\subseteq Y_2$ and $s\rightarrow x_1$. Let $y\in Y_2$ with $y\rightarrow x_1$. By Claim $4$,  we have $C(y,x_1)=C(s,x_1)=C(x_1,Y_0)$. In the rainbow path $(s,x_1,r)$, we replace the arc $(s,x_1)$ with $(y,x_1)$ and get a rainbow path $(y,x_1,r)$. This means that all vertices dominating $x_1$ in $Y_2$ can reach $r$ by a rainbow path. Let $$Q_1=\{y\in Y_2\setminus\{r\}\,|\,y\rightarrow x_1\},\,\,Q_2=\{y\in Y_2\setminus\{r\}\,|\,y\rightarrow x_2\}.$$
Clearly, each vertex of $Q_1$ can reach $r$ by a rainbow path. By Claim $1$, we have $Q_1\cup Q_2=Y_2\setminus\{r\}$ and $Q_1\cap Q_2=\emptyset$. If $Q_2=\emptyset$, then $Y_0\cup\{r\}$ is a RP-kernel of $D$. So assume that $Q_2\neq\emptyset$. Also by Claim 1,
$$x_1\Rightarrow Q_2\Rightarrow x_2.$$

If there exists a rainbow $(q_2,r)$-path $P'$ for some $q_2\in Q_2$, we claim that $Y_0\cup\{r\}$ is a RP-kernel of $D$. Since $x_1\rightarrow r\rightarrow x_2$ and $x_1\rightarrow q_2\rightarrow x_2$, we have $\ell(P')\neq 2$ and hence $\ell(P')=4$. W.l.o.g., assume $P'=(q_2,x_2,y,x_1,r)$ where $y\in Y\setminus Y_0$. It is sufficient to show that there exists a rainbow $(q'_2,r)$-path for any $q'_2\in Q_2\setminus \{q_2,r\}$. By Claim $4$, we have $C(q_2,x_2)=C(q'_2,x_2)=C(x_2,Y_0)$. In the rainbow path $P'$, we replace the arc $(q_2,x_2)$ with $(q'_2,x_2)$ and get a rainbow $(q'_2,r)$-path $(q'_2,x_2,y,x_1,r)$. This implies that $Y_0\cup\{r\}$ is a RP-kernel of $D$.

If there exists a rainbow $(r,q_2)$-path $P''$ for some $q_2\in Q_2$, we claim that $Y_0\cup\{q_2\}$ is a RP-kernel of $D$. Since $x_1\rightarrow r\rightarrow x_2$ and $x_1\rightarrow q_2\rightarrow x_2$, we have $\ell(P'')\neq 2$ and hence $\ell(P'')=4$. W.l.o.g., assume $P''=(r,x_2,y,x_1,q_2)$ where $y\in Y\setminus Y_0$. It is clear that $C(x_1,q_2)\neq C(x_1,Y_0)$, since otherwise, in the rainbow path $P''$, we replace the arc $(x_1,q_2)$ with $(x_1,y_0)$ for any $y_0\in Y_0$ and get a rainbow $(r,y_0)$-path, which contradicts $r\in Y_2$. Now it is sufficient to show that there exists a rainbow $(w,q_2)$-path for any $w\in Y_2\setminus \{r,q_2\}$. By Claim $1$, we have either $w\rightarrow x_1$ or $w\rightarrow x_2$. If $w\rightarrow x_1$, by Claim $4$, we have $C(w,x_1)=C(x_1,Y_0)$. Combining with $C(x_1,q_2)\neq C(x_1,Y_0)$, we have $(w,x_1,q_2)$ is a rainbow $(w,q_2)$-path. If $w\rightarrow x_2$, by Claim $4$, we have $C(w,x_2)=C(r,x_2)=C(x_2,Y_0)$. In the rainbow path $P''$, we replace the arc $(r,x_2)$ with $(w,x_2)$ and get a rainbow $(w,q_2)$-path $(w,x_2,y,x_1,q_2)$. This implies that $Y_0\cup\{q_2\}$ is a RP-kernel of $D$.

If there exists no rainbow $(q_2,r)$-path and no rainbow $(r,q_2)$-path for any $q_2\in Q_2$, we claim that $Y_0\cup Q_2\cup\{r\}$ is a RP-kernel of $D$.
\vskip 2mm
\noindent {\bf Claim 5.} If there exists no rainbow $(Q_2,r)$-path and there exists no rainbow $(r,Q_2)$-path, then there exists no rainbow path for any pair of vertices of $Q_2$.

\begin{proof}Suppose to the contrary that there exists a rainbow path for some $q_2,\,q'_2\in Q_2$, say $(q_2,x_2,y,x_1,q'_2)$ where $y\in Y\setminus Y_0$. Note that $y\neq r$ since there exists no rainbow $(Q_2,r)$-path. Since $r,q_2\in Y_2$, by Claim $4$, we have $C(r,x_2)=C(q_2,x_2)=C(x_2,Y_0)$. In the rainbow path $(q_2,x_2,y,x_1,q'_2)$, we replace the arc $(q_2,x_2)$ with $(r,x_2)$ and get a rainbow $(r,q'_2)$-path $(r,x_2,y,x_1,q'_2)$, which contradicts that there exists no rainbow $(r,Q_2)$-path.
\end{proof}

Recall that $Q_1\cup Q_2=Y_2\setminus\{r\}$ and each vertex of $Q_1$ can reach $r$ by a rainbow path. By Claim 2 and $5$, $Y_0\cup Q_2\cup\{r\}$ is a RP-kernel of $D$.
\vskip 2mm
\noindent \emph{Case 2.} $Y_0=\emptyset$.
\vskip 2mm

By Claim 1, in this case each vertex of $Y$ has one out-neighbour and one in-neighbour in $X$. We give a partition of $Y$ as follows.
$$Y'=\{y\in Y\,|\,x_1\rightarrow y\rightarrow x_2\},\quad Y''=\{y\in Y\,|\,x_2\rightarrow y\rightarrow x_1\}.$$

If $Y'=\emptyset$, then $Y=Y''$ and $x_2\Rightarrow Y\Rightarrow x_1$. If there exists a rainbow $(x_2,x_1)$-path, then $\{x_1\}$ is a RP-kernel of $D$. If there exists no rainbow $(x_2,x_1)$-path, then $\{x_1,x_2\}$ is a RP-kernel of $D$. If $Y''=\emptyset$, we can prove that either $\{x_2\}$ or $\{x_1,x_2\}$ is a RP-kernel of $D$. So we assume $Y'\neq\emptyset$ and $Y''\neq\emptyset$.

In particular, we consider the following subsets of $Y'$ and $Y''$, respectively.
$$Y^*=\{y\in Y'\,|\,C(x_1,y)\neq C(y,x_2)\}, \quad Y^{**}=\{y\in Y''\,|\,C(x_2,y)\neq C(y,x_1)\}.$$

Let $y'\in Y'$ and $y''\in Y''$ be arbitrary. If $Y^*=\emptyset$, then $C(x_1,y')=C(y',x_2)$. Note that $(x_1,y',x_2,y'',x_1)$ is a 4-cycle. Since every $4$-cycle is coloured with at least 3 colours, we have $(y',x_2,y'',x_1)$ is a rainbow path. Clearly, $(y'',x_1)$ is a rainbow path. It follows that $\{x_1\}$ is a RP-kernel of $D$. Similarly, If $Y^{**}=\emptyset$, we can prove that $\{x_2\}$ is a RP-kernel of $D$. So we assume $Y^*\neq\emptyset$ and $Y^{**}\neq\emptyset$.

\vskip 2mm

\noindent\emph{Subcase 2.1.} $Y'\setminus Y^* \neq\emptyset$ or $Y''\setminus Y^{**} \neq\emptyset$.

\vskip 2mm

W.l.o.g., assume $Y'\setminus Y^* \neq\emptyset$. Let $y'\in Y'\setminus Y^*$ be arbitrary. We assume that $C(x_1,y')=C(y',x_2)=\alpha$. Let
$$Y'_{\alpha}=\{y'\in Y'\,|\,C(x_1,y')=C(y',x_2)=\alpha\}.$$
Clearly, $Y'_{\alpha}\neq\emptyset$. Let $y''\in Y''$ be arbitrary. Note that $x_1\Rightarrow y'\Rightarrow x_2\Rightarrow y''\Rightarrow x_1$. Since every $4$-cycle in $D$ is coloured with at least $3$ colours, we have $C(x_2,y'')\ne C(y'',x_1)$, $C(x_2,y'')\ne\alpha$ and $C(y'',x_1)\ne\alpha$. Let $C(x_2,y'')=\beta$ and $C(y'',x_1)=\gamma$. Then $\alpha,\,\beta,\,\gamma$ are pairwise distinct. Let $$Y''_{\beta\gamma}=\{y''\in Y''\,|\,C(x_2,y'')=\beta,\,C(y'',x_1)=\gamma \}.$$

Define the following vertex subsets, see Figure 2, in which a box represent a set of vertices and dotted arcs, dashed arcs, thick dotted arcs, solid arcs represent respectively the arcs coloured by $\alpha$, $\beta$, $\gamma$ and a colour not in $\{\alpha,\beta,\gamma\}$.
 $$Y'_{\alpha\beta}=\{y'\in Y'\,|\,C(x_1,y')=\alpha,\,C(y',x_2)=\beta\},$$
 $$Y'_{\beta\alpha}=\{y'\in Y'\,|\,C(x_1,y')=\beta,\,C(y',x_2)=\alpha\},$$
 $$Y'_{\alpha\gamma}=\{y'\in Y'\,|\,C(x_1,y')=\alpha,\,C(y',x_2)=\gamma\},$$
 $$Y'_{\gamma\alpha}=\{y'\in Y'\,|\,C(x_1,y')=\gamma,\,C(y',x_2)=\alpha\},$$
 $$Y'_{\omega\alpha}=\{y'\in Y'\,|\,C(x_1,y')\notin\{\alpha,\beta,\gamma\},\,C(y',x_2)=\alpha\},$$
 $$Y'_{\omega\beta}=\{y'\in Y'\,|\,C(x_1,y')\notin\{\alpha,\beta,\gamma\},\,C(y',x_2)=\beta\},$$
$$Y'_{\omega\gamma}=\{y'\in Y'\,|\,C(x_1,y')\notin\{\alpha,\beta,\gamma\},\,C(y',x_2)=\gamma\},$$
$$Y'^{+}_{\omega}=\{y'\in Y'\,|\,C(x_1,y')\mbox{ is arbitrary, }\,C(y',x_2)\notin\{\alpha,\beta,\gamma\}\}.$$
$$Y''_{\gamma\beta}=\{y''\in Y''\,|\,C(x_2,y'')=\gamma,\,C(y'',x_1)=\beta \},$$
$$Y''_{\gamma\omega}=\{y''\in Y''\,|\,C(x_2,y'')=\gamma,\,C(y'',x_1)\notin\{\alpha,\beta,\gamma\}\},$$
$$Y''_{\omega\gamma}=\{y''\in Y''\,|\,C(x_2,y'')\notin\{\alpha,\beta,\gamma\},\,C(y'',x_1)=\gamma \},$$
$$Y''_{\beta\omega}=\{y''\in Y''\,|\,C(x_2,y'')=\beta,\,C(y'',x_1)\notin\{\alpha,\beta,\gamma\}\},$$
$$Y''_{\omega\beta}=\{y''\in Y''\,|\,C(x_2,y'')\notin\{\alpha,\beta,\gamma\},\,C(y'',x_1)=\beta \},$$
$$Y''_{\omega_1\omega_2}=\{y''\in Y''\,|\,C(x_2,y''), C(y'',x_1)\notin\{\alpha,\beta,\gamma\}\mbox{ and }C(x_2,y'')\neq C(y'',x_1)\}.$$

\begin {figure}[h]
\unitlength0.4cm
\begin{center}
\begin{picture}(30,12)
\put(17,11){\circle*{.3}}
\put(16.9,11.5){$x_1$}
\put(17,0){\circle*{.3}}
\put(16.9,-0.7){$x_2$}
\put(1,5){\framebox(1.4,1)}
\put(1,5.25){\small{$Y'^{+}_{\omega}$}}
\put(2.7,5){\framebox(1.4,1)}
\put(2.7,5.25){\small{$Y'_{\omega\gamma}$}}
\put(4.4,5){\framebox(1.4,1)}
\put(4.4,5.25){\small{$Y'_{\omega\beta}$}}
\put(6.1,5){\framebox(1.4,1)}
\put(6.1,5.25){\small{$Y'_{\omega\alpha}$}}
\put(7.8,5){\framebox(1.4,1)}
\put(7.8,5.25){\small{$Y'_{\gamma\alpha}$}}
\put(9.5,5){\framebox(1.4,1)}
\put(9.5,5.25){\small{$Y'_{\alpha\gamma}$}}
\put(11.2,5){\framebox(1.4,1)}
\put(11.2,5.25){\small{$Y'_{\beta\alpha}$}}
\put(12.9,5){\framebox(1.4,1)}
\put(12.9,5.25){\small{$Y'_{\alpha\beta}$}}
\put(14.6,5){\framebox(1.4,1)}
\put(14.6,5.25){\small{$Y'_{\alpha}$}}
%Y' NINE%

%xy'%  %\beta=120,\gamma=80,\omega=40%
%\qbezier(17,11)(9.25,8.5)(1.5,6)%$x_1,Y'^{+}_{\omega}$ %[40]
%\put(6.3,7.55){\vector(-3,-1){.01}}
\qbezier(17,0)(9.25,2.5)(1.5,5)%$Y'^{+}_{\omega},x_2$%[40]
\put(6.5,3.4){\vector(3,-1){.01}}
\qbezier(17,11)(10.1,8.5)(3.2,6)%$x_1,Y'_{\omega\gamma}%[40]
\put(7.8,7.7){\vector(-3,-1){.01}}
%\qbezier[80](17,0)(10.1,2.5)(3.2,5)%$Y'_{\omega\gamma},x_2%
\put(3.45,4.9){\circle*{.03}}
\put(3.73,4.8){\circle*{.03}}
\put(4.01,4.7){\circle*{.03}}
\put(4.29,4.6){\circle*{.03}}
\put(4.57,4.5){\circle*{.03}}
\put(4.85,4.4){\circle*{.03}}
\put(5.13,4.3){\circle*{.03}}
\put(5.41,4.2){\circle*{.03}}
\put(5.69,4.1){\circle*{.03}}
\put(5.97,4){\circle*{.03}}
\put(6.25,3.9){\circle*{.03}}
\put(6.53,3.8){\circle*{.03}}
\put(6.81,3.7){\circle*{.03}}
\put(7.09,3.6){\circle*{.03}}
\put(7.37,3.5){\circle*{.03}}
\put(7.65,3.4){\circle*{.03}}
\put(7.93,3.3){\circle*{.03}}
\put(8.21,3.2){\circle*{.03}}
\put(8.49,3.1){\circle*{.03}}
\put(8.77,3){\circle*{.03}}
\put(9.05,2.9){\circle*{.03}}
\put(9.33,2.8){\circle*{.03}}
\put(9.61,2.7){\circle*{.03}}
\put(9.87,2.6){\circle*{.03}}
\put(10.15,2.5){\circle*{.03}}
\put(10.42,2.4){\circle*{.03}}
\put(10.7,2.3){\circle*{.03}}
\put(10.95,2.2){\circle*{.03}}
\put(11.2,2.1){\circle*{.03}}
\put(11.45,2){\circle*{.03}}
\put(11.7,1.9){\circle*{.03}}
\put(11.98,1.8){\circle*{.03}}
\put(12.26,1.7){\circle*{.03}}
\put(12.54,1.6){\circle*{.03}}
\put(12.82,1.5){\circle*{.03}}
\put(13.1,1.4){\circle*{.03}}
\put(13.38,1.3){\circle*{.03}}
\put(13.66,1.2){\circle*{.03}}
\put(13.94,1.1){\circle*{.03}}
\put(14.22,1){\circle*{.03}}
\put(14.5,0.9){\circle*{.03}}
\put(14.78,0.8){\circle*{.03}}
\put(15.06,0.7){\circle*{.03}}
\put(15.34,0.6){\circle*{.03}}
\put(15.62,0.5){\circle*{.03}}
\put(15.9,0.4){\circle*{.03}}
\put(16.18,0.3){\circle*{.03}}
\put(16.46,0.2){\circle*{.03}}

\put(7.81,3.3){\vector(2,-1){.01}}
\qbezier(17,11)(10.95,8.5)(4.9,6)%$x_1,Y'_{\omega\beta}$%[40]
\put(9.1,7.7){\vector(-2,-1){.01}}
%\qbezier(17,0)(10.95,2.5)(4.9,5)
\multiput(4.9,5)(.02,-.01){20}{\line(1,0){.037182}}
\multiput(5.7,4.7)(.02,-.01){20}{\line(1,0){.037182}}
\multiput(6.4,4.4)(.02,-.01){20}{\line(1,0){.037182}}
\multiput(7.1,4.1)(.02,-.01){20}{\line(1,0){.037182}}
\multiput(7.8,3.8)(.02,-.01){20}{\line(1,0){.037182}}
\multiput(8.5,3.5)(.02,-.01){20}{\line(1,0){.037182}}
\multiput(9.3,3.2)(.02,-.01){20}{\line(1,0){.037182}}
\multiput(10,2.9)(.02,-.01){20}{\line(1,0){.037182}}
\multiput(10.7,2.6)(.02,-.01){20}{\line(1,0){.037182}}
\multiput(11.45,2.3)(.02,-.01){20}{\line(1,0){.037182}}
\multiput(12.2,2)(.02,-.01){20}{\line(1,0){.037182}}
\multiput(12.9,1.7)(.02,-.01){20}{\line(1,0){.037182}}
\multiput(13.6,1.4)(.02,-.01){20}{\line(1,0){.037182}}
\multiput(14.35,1.1)(.02,-.01){20}{\line(1,0){.037182}}
\multiput(15.1,0.8)(.02,-.01){20}{\line(1,0){.037182}}
\multiput(15.8,0.5)(.02,-.01){20}{\line(1,0){.037182}}
%\qbezier[160](17,0)(10.95,2.5)(4.9,5)%$Y'_{\omega\beta},x_2$   \beta%
\put(9.1,3.25){\vector(2,-1){.01}}
\qbezier(17,11)(11.8,8.5)(6.6,6)%$x_1,Y'_{\omega\alpha}$%[40]
\put(10.4,7.8){\vector(-2,-1){.01}}
\qbezier[45](17,0)(11.8,2.5)(6.6,5)%$Y'_{\omega\alpha},x_2$%
\put(10.4,3.15){\vector(2,-1){.01}}
%\qbezier[60](17,11)(12.65,8.5)(8.3,6)%$x_1,Y'_{\gamma\alpha}$%

\put(16.71,10.85){\circle*{.03}}
\put(16.45,10.7){\circle*{.03}}
\put(16.19,10.55){\circle*{.03}}
\put(15.93,10.4){\circle*{.03}}
\put(15.67,10.25){\circle*{.03}}
\put(15.41,10.1){\circle*{.03}}
\put(15.15,9.95){\circle*{.03}}
\put(14.89,9.8){\circle*{.03}}
\put(14.63,9.65){\circle*{.03}}
\put(14.36,9.5){\circle*{.03}}
\put(14.1,9.35){\circle*{.03}}
\put(13.84,9.2){\circle*{.03}}
\put(13.58,9.05){\circle*{.03}}
\put(13.32,8.9){\circle*{.03}}
\put(13.07,8.75){\circle*{.03}}
\put(12.82,8.6){\circle*{.03}}
\put(12.57,8.45){\circle*{.03}}
\put(12.32,8.3){\circle*{.03}}
\put(12.06,8.15){\circle*{.03}}
\put(11.8,8){\circle*{.03}}
\put(11.53,7.85){\circle*{.03}}
\put(11.25,7.7){\circle*{.03}}
\put(10.97,7.55){\circle*{.03}}
\put(10.73,7.4){\circle*{.03}}
\put(10.45,7.25){\circle*{.03}}
\put(10.2,7.1){\circle*{.03}}
\put(9.95,6.95){\circle*{.03}}
\put(9.7,6.8){\circle*{.03}}
\put(9.45,6.65){\circle*{.03}}
\put(9.18,6.5){\circle*{.03}}
\put(8.9,6.35){\circle*{.03}}
\put(8.65,6.2){\circle*{.03}}
\put(8.4,6.05){\circle*{.03}}
\put(11.3,7.7){\vector(-2,-1){.01}}
\qbezier[40](17,0)(12.65,2.5)(8.3,5)%$Y'_{\gamma\alpha},x_2$%
\put(11.3,3.3){\vector(2,-1){.01}}
\qbezier[40](17,11)(13.5,8.5)(10,6)%$x_1,Y'_{\alpha\gamma}$%
\put(12.4,7.7){\vector(-3,-2){.01}}
%\qbezier[60](17,0)(13.5,2.5)(10,5)%$Y'_{\alpha\gamma},x_2$%
\put(10.15,4.9){\circle*{.03}}
\put(10.37,4.75){\circle*{.03}}
\put(10.58,4.6){\circle*{.03}}
\put(10.79,4.45){\circle*{.03}}
\put(11,4.3){\circle*{.03}}
\put(11.21,4.15){\circle*{.03}}
\put(11.42,4){\circle*{.03}}
\put(11.63,3.85){\circle*{.03}}
\put(11.84,3.7){\circle*{.03}}
\put(12.05,3.55){\circle*{.03}}
\put(12.26,3.4){\circle*{.03}}
\put(12.47,3.25){\circle*{.03}}
\put(12.68,3.1){\circle*{.03}}
\put(12.89,2.95){\circle*{.03}}
\put(13.1,2.8){\circle*{.03}}
\put(13.31,2.65){\circle*{.03}}
\put(13.52,2.5){\circle*{.03}}
\put(13.73,2.35){\circle*{.03}}
\put(13.94,2.2){\circle*{.03}}
\put(14.15,2.05){\circle*{.03}}
\put(14.36,1.9){\circle*{.03}}
\put(14.57,1.75){\circle*{.03}}
\put(14.79,1.6){\circle*{.03}}
\put(15,1.45){\circle*{.03}}
\put(15.21,1.3){\circle*{.03}}
\put(15.42,1.15){\circle*{.03}}
\put(15.63,1){\circle*{.03}}
\put(15.84,0.85){\circle*{.03}}
\put(16.05,0.7){\circle*{.03}}
\put(16.27,0.55){\circle*{.03}}
\put(16.48,0.4){\circle*{.03}}
\put(16.69,0.25){\circle*{.03}}
\put(12.4,3.3){\vector(3,-2){.01}}
%\qbezier(17,11)(14.35,8.5)(11.7,6)%$x_1,Y'_{\beta\alpha}$%

\multiput(17,11)(-.02,-.02){15}{\line(1,0){.037182}}
\multiput(16.55,10.6)(-.02,-.02){15}{\line(1,0){.037182}}
\multiput(16.1,10.2)(-.02,-.02){15}{\line(1,0){.037182}}
\multiput(15.66,9.8)(-.02,-.02){15}{\line(1,0){.037182}}
\multiput(15.25,9.4)(-.02,-.02){15}{\line(1,0){.037182}}
\multiput(14.8,9)(-.02,-.02){15}{\line(1,0){.037182}}
\multiput(14.4,8.6)(-.02,-.02){15}{\line(1,0){.037182}}
\multiput(13.95,8.2)(-.02,-.02){15}{\line(1,0){.037182}}
\multiput(13.55,7.8)(-.02,-.02){15}{\line(1,0){.037182}}
\multiput(13.15,7.4)(-.02,-.02){15}{\line(1,0){.037182}}
\multiput(12.75,7)(-.02,-.02){15}{\line(1,0){.037182}}
\multiput(12.35,6.6)(-.02,-.02){15}{\line(1,0){.037182}}
\multiput(11.9,6.2)(-.02,-.02){10}{\line(1,0){.037182}}
%\qbezier[90](17,11)(14.35,8.5)(11.7,6)%$x_1,Y'_{\beta\alpha}$%
\put(13.5,7.7){\vector(-1,-1){.01}}
\qbezier[30](17,0)(14.35,2.5)(11.7,5)%$Y'_{\beta\alpha},x_2$%
\put(13.5,3.3){\vector(1,-1){.01}}
\qbezier[25](17,11)(15.2,8.5)(13.4,6)%$x_1,Y'_{\alpha\beta}$%
\put(14.6,7.7){\vector(-2,-3){.01}}
\multiput(13.4,5)(.02,-.03){12}{\line(1,0){.037182}}
\multiput(13.75,4.5)(.02,-.03){12}{\line(1,0){.037182}}
\multiput(14.1,4)(.02,-.03){12}{\line(1,0){.037182}}
\multiput(14.45,3.5)(.02,-.03){12}{\line(1,0){.037182}}
\multiput(14.85,3)(.02,-.03){12}{\line(1,0){.037182}}
\multiput(15.2,2.5)(.02,-.03){12}{\line(1,0){.037182}}
\multiput(15.55,2)(.02,-.03){12}{\line(1,0){.037182}}
\multiput(15.9,1.5)(.02,-.03){12}{\line(1,0){.037182}}
\multiput(16.25,1)(.02,-.03){12}{\line(1,0){.037182}}
\multiput(16.6,0.5)(.02,-.03){12}{\line(1,0){.037182}}
%\qbezier[80](17,0)(15.2,2.5)(13.4,5)%$Y'_{\alpha\beta},x_2$%
\put(14.6,3.3){\vector(2,-3){.01}}
\qbezier[25](17,11)(16.05,8.5)(15.1,6)%$x_1,Y'_{\alpha}$%
\put(15.75,7.7){\vector(-1,-3){.01}}
\qbezier[25](17,0)(16.05,2.5)(15.1,5)%$Y'_{\alpha},x_2$%
\put(15.75,3.3){\vector(1,-3){.01}}

\put(18.5,5){\framebox(1.4,1)}
\put(18.5,5.25){\small{$Y''_{\beta\gamma}$}}
\put(20.2,5){\framebox(1.4,1)}
\put(20.2,5.25){\small{$Y''_{\gamma\beta}$}}
\put(21.9,5){\framebox(1.4,1)}
\put(21.9,5.25){\small{$Y''_{\gamma\omega}$}}
\put(23.6,5){\framebox(1.4,1)}
\put(23.6,5.25){\small{$Y''_{\omega\gamma}$}}
\put(25.3,5){\framebox(1.4,1)}
\put(25.3,5.25){\small{$Y''_{\beta\omega}$}}
\put(27,5){\framebox(1.4,1)}
\put(27,5.25){\small{$Y''_{\omega\beta}$}}
\put(28.7,5){\framebox(1.4,1)}
\put(28.7,5.25){\small{$Y''_{\omega_1\omega_2}$}}
%%$Y'' seven$
%xy''%  %\beta=120,\gamma=80,\omega=40%
%\qbezier[40](17,11)(18,8.5)(19,6)%$Y''_{\beta\gamma},x_1$%
\put(18.91,6.2){\circle*{.03}}
\put(18.83,6.4){\circle*{.03}}
\put(18.75,6.6){\circle*{.03}}
\put(18.67,6.8){\circle*{.03}}
\put(18.59,7){\circle*{.03}}
\put(18.51,7.2){\circle*{.03}}
\put(18.43,7.4){\circle*{.03}}
\put(18.35,7.6){\circle*{.03}}
\put(18.27,7.8){\circle*{.03}}
\put(18.19,8){\circle*{.03}}
\put(18.11,8.2){\circle*{.03}}
\put(18.03,8.4){\circle*{.03}}
\put(17.95,8.6){\circle*{.03}}
\put(17.87,8.8){\circle*{.03}}
\put(17.79,9){\circle*{.03}}
\put(17.71,9.2){\circle*{.03}}
\put(17.63,9.4){\circle*{.03}}
\put(17.55,9.6){\circle*{.03}}
\put(17.47,9.8){\circle*{.03}}
\put(17.39,10){\circle*{.03}}
\put(17.31,10.2){\circle*{.03}}
\put(17.23,10.4){\circle*{.03}}
\put(17.15,10.6){\circle*{.03}}
\put(17.07,10.8){\circle*{.03}}

\put(18.2,8){\vector(-1,3){.01}}
\multiput(17,0)(.01,.03){12}{\line(1,0){.037182}}
\multiput(17.2,0.5)(.01,.03){12}{\line(1,0){.037182}}
\multiput(17.4,1)(.01,.03){12}{\line(1,0){.037182}}
\multiput(17.6,1.5)(.01,.03){12}{\line(1,0){.037182}}
\multiput(17.8,2)(.01,.03){12}{\line(1,0){.037182}}
\multiput(18,2.5)(.01,.03){12}{\line(1,0){.037182}}
\multiput(18.2,3)(.01,.03){12}{\line(1,0){.037182}}
\multiput(18.4,3.5)(.01,.03){12}{\line(1,0){.037182}}
\multiput(18.6,4)(.01,.03){12}{\line(1,0){.037182}}
\multiput(18.8,4.5)(.01,.03){12}{\line(1,0){.037182}}
%\qbezier[80](17,0)(18,2.5)(19,5)%$x_2,Y''_{\beta\gamma}$%
\put(18.3,3.3){\vector(1,3){.01}}

\multiput(17,11)(.02,-.03){12}{\line(1,0){.037182}}
\multiput(17.35,10.5)(.02,-.03){12}{\line(1,0){.037182}}
\multiput(17.71,10)(.02,-.03){12}{\line(1,0){.037182}}
\multiput(18.08,9.5)(.02,-.03){12}{\line(1,0){.037182}}
\multiput(18.44,9)(.02,-.03){12}{\line(1,0){.037182}}
\multiput(18.81,8.5)(.02,-.03){12}{\line(1,0){.037182}}
\multiput(19.18,8)(.02,-.03){12}{\line(1,0){.037182}}
\multiput(19.55,7.5)(.02,-.03){12}{\line(1,0){.037182}}
\multiput(19.92,7)(.02,-.03){12}{\line(1,0){.037182}}
\multiput(20.29,6.5)(.02,-.03){12}{\line(1,0){.037182}}
%\qbezier[70](17,11)(18.85,8.5)(20.7,6)%$Y''_{\gamma\beta},x_1%
\put(19.2,8){\vector(-2,3){.01}}
%\qbezier[40](17,0)(18.85,2.5)(20.7,5)%$x_2, Y''_{\gamma\beta}%

\put(17.15,0.2){\circle*{.03}}
\put(17.3,0.4){\circle*{.03}}
\put(17.45,0.6){\circle*{.03}}
\put(17.6,0.8){\circle*{.03}}
\put(17.75,1){\circle*{.03}}
\put(17.9,1.2){\circle*{.03}}
\put(18.05,1.4){\circle*{.03}}
\put(18.2,1.6){\circle*{.03}}
\put(18.35,1.8){\circle*{.03}}
\put(18.5,2){\circle*{.03}}
\put(18.65,2.2){\circle*{.03}}
\put(18.8,2.4){\circle*{.03}}
\put(18.95,2.6){\circle*{.03}}
\put(19.1,2.8){\circle*{.03}}
\put(19.25,3){\circle*{.03}}
\put(19.4,3.2){\circle*{.03}}
\put(19.55,3.4){\circle*{.03}}
\put(19.7,3.6){\circle*{.03}}
\put(19.85,3.8){\circle*{.03}}
\put(20,4){\circle*{.03}}
\put(20.15,4.2){\circle*{.03}}
\put(20.3,4.4){\circle*{.03}}
\put(20.45,4.6){\circle*{.03}}
\put(20.6,4.8){\circle*{.03}}
\put(19.45,3.3){\vector(2,3){.01}}

\qbezier(17,11)(19.75,8.5)(22.5,6)%$Y''_{\gamma\omega},x_1%
\put(20.3,8){\vector(-1,1){.01}}
%\qbezier[50](17,0)(19.75,2.5)(22.5,5)%$x_2, Y''_{\gamma\omega}%
\put(17.2,0.2){\circle*{.03}}
\put(17.4,0.38){\circle*{.03}}
\put(17.6,0.56){\circle*{.03}}
\put(17.8,0.74){\circle*{.03}}
\put(18,0.92){\circle*{.03}}
\put(18.2,1.1){\circle*{.03}}
\put(18.4,1.28){\circle*{.03}}
\put(18.6,1.46){\circle*{.03}}
\put(18.8,1.64){\circle*{.03}}
\put(19,1.82){\circle*{.03}}
\put(19.2,2){\circle*{.03}}
\put(19.4,2.18){\circle*{.03}}
\put(19.6,2.36){\circle*{.03}}
\put(19.8,2.54){\circle*{.03}}
\put(20,2.72){\circle*{.03}}
\put(20.2,2.9){\circle*{.03}}
\put(20.4,3.08){\circle*{.03}}
\put(20.6,3.26){\circle*{.03}}
\put(20.8,3.44){\circle*{.03}}
\put(21,3.62){\circle*{.03}}
\put(21.2,3.8){\circle*{.03}}
\put(21.4,3.98){\circle*{.03}}
\put(21.6,4.16){\circle*{.03}}
\put(21.8,4.34){\circle*{.03}}
\put(22,4.52){\circle*{.03}}
\put(22.2,4.7){\circle*{.03}}
\put(22.4,4.88){\circle*{.03}}

\put(20.65,3.3){\vector(1,1){.01}}
%\qbezier[60](17,11)(20.65,8.5)(24.3,6)%$Y''_{\omega\gamma},x_1%
\put(24.25,6.05){\circle*{.03}}
\put(24,6.2){\circle*{.03}}
\put(23.81,6.35){\circle*{.03}}
\put(23.59,6.5){\circle*{.03}}
\put(23.37,6.65){\circle*{.03}}
\put(23.15,6.8){\circle*{.03}}
\put(22.93,6.95){\circle*{.03}}
\put(22.71,7.1){\circle*{.03}}
\put(22.49,7.25){\circle*{.03}}
\put(22.27,7.4){\circle*{.03}}
\put(22.05,7.55){\circle*{.03}}
\put(21.83,7.7){\circle*{.03}}
\put(21.61,7.85){\circle*{.03}}
\put(21.39,8){\circle*{.03}}
\put(21.17,8.15){\circle*{.03}}
\put(20.95,8.3){\circle*{.03}}
\put(20.73,8.45){\circle*{.03}}
\put(20.51,8.6){\circle*{.03}}
\put(20.29,8.75){\circle*{.03}}
\put(20.07,8.9){\circle*{.03}}
\put(19.85,9.05){\circle*{.03}}
\put(19.63,9.2){\circle*{.03}}
\put(19.41,9.35){\circle*{.03}}
\put(19.19,9.5){\circle*{.03}}
\put(18.97,9.65){\circle*{.03}}
\put(18.75,9.8){\circle*{.03}}
\put(18.53,9.95){\circle*{.03}}
\put(18.31,10.1){\circle*{.03}}
\put(18.09,10.25){\circle*{.03}}
\put(17.87,10.4){\circle*{.03}}
\put(17.65,10.55){\circle*{.03}}
\put(17.43,10.7){\circle*{.03}}

\put(21.37,8){\vector(-3,2){.01}}
\qbezier(17,0)(20.65,2.5)(24.3,5)%$x_2, Y''_{\omega\gamma}%
\put(21.85,3.3){\vector(3,2){.01}}
\qbezier(17,11)(21.5,8.5)(26,6)%$Y''_{\beta\omega},x_1%
\put(22.4,8){\vector(-3,2){.01}}
\multiput(17,0)(.02,.01){15}{\line(1,0){.037182}}
\multiput(17.44,0.25)(.02,.01){15}{\line(1,0){.037182}}
\multiput(17.88,0.5)(.02,.01){15}{\line(1,0){.037182}}
\multiput(18.32,0.75)(.02,.01){15}{\line(1,0){.037182}}
\multiput(18.76,1)(.02,.01){15}{\line(1,0){.037182}}
\multiput(19.2,1.25)(.02,.01){15}{\line(1,0){.037182}}
\multiput(19.65,1.5)(.02,.01){15}{\line(1,0){.037182}}
\multiput(20.1,1.75)(.02,.01){15}{\line(1,0){.037182}}
\multiput(20.58,2)(.02,.01){15}{\line(1,0){.037182}}
\multiput(21,2.25)(.02,.01){15}{\line(1,0){.037182}}
\multiput(21.48,2.5)(.02,.01){15}{\line(1,0){.037182}}
\multiput(21.94,2.75)(.02,.01){15}{\line(1,0){.037182}}
\multiput(22.35,3)(.02,.01){15}{\line(1,0){.037182}}
\multiput(22.8,3.25)(.02,.01){15}{\line(1,0){.037182}}
\multiput(23.25,3.5)(.02,.01){15}{\line(1,0){.037182}}
\multiput(23.7,3.75)(.02,.01){15}{\line(1,0){.037182}}
\multiput(24.15,4)(.02,.01){15}{\line(1,0){.037182}}
\multiput(24.6,4.25)(.02,.01){15}{\line(1,0){.037182}}
\multiput(25.05,4.5)(.02,.01){15}{\line(1,0){.037182}}
\multiput(25.5,4.75)(.02,.01){15}{\line(1,0){.037182}}

%\qbezier[110](17,0)(21.5,2.5)(26,5)%$x_2, Y''_{\beta\omega}%
\put(22.95,3.3){\vector(2,1){.01}}

\multiput(27.7,6)(-.02,.01){20}{\line(1,0){.037182}}
\multiput(27,6.3)(-.02,.01){20}{\line(1,0){.037182}}
\multiput(26.3,6.6)(-.02,.01){20}{\line(1,0){.037182}}
\multiput(25.65,6.9)(-.02,.01){20}{\line(1,0){.037182}}
\multiput(25,7.2)(-.02,.01){20}{\line(1,0){.037182}}
\multiput(24.35,7.5)(-.02,.01){20}{\line(1,0){.037182}}
\multiput(23.65,7.8)(-.02,.01){20}{\line(1,0){.037182}}
\multiput(23.05,8.1)(-.02,.01){20}{\line(1,0){.037182}}
\multiput(22.4,8.4)(-.02,.01){20}{\line(1,0){.037182}}
\multiput(21.8,8.7)(-.02,.01){20}{\line(1,0){.037182}}
\multiput(21.15,9)(-.02,.01){20}{\line(1,0){.037182}}
\multiput(20.55,9.3)(-.02,.01){20}{\line(1,0){.037182}}
\multiput(19.9,9.6)(-.02,.01){20}{\line(1,0){.037182}}
\multiput(19.25,9.9)(-.02,.01){20}{\line(1,0){.037182}}
\multiput(18.6,10.2)(-.02,.01){20}{\line(1,0){.037182}}
\multiput(17.95,10.5)(-.02,.01){20}{\line(1,0){.037182}}
\multiput(17.3,10.8)(-.02,.01){20}{\line(1,0){.037182}}
%\qbezier[120](17,11)(22.1,8.5)(27.7,6)%$Y''_{\omega\beta},x_1%
\put(23.3,8){\vector(-2,1){.01}}
\qbezier(17,0)(22.1,2.5)(27.7,5)%$x_2, Y''_{\omega\beta}%
\put(24,3.3){\vector(2,1){.01}}
\qbezier(17,11)(23.2,8.5)(29.4,6)%$Y''_{\omega_1\omega_2},x_1%
\put(24.45,8){\vector(-2,1){.01}}
\qbezier(17,0)(23.2,2.5)(29.4,5)%$x_2, Y''_{\omega_1\omega_2}%
\put(25.2,3.3){\vector(2,1){.01}}
\end{picture}
\caption{An arc-coloured bipartite tournament for subcase 2.1 of the proof of Theorem \ref{z3.2}.}
\end{center}
\end{figure}

Note that $Y'_\alpha\ne \emptyset$ and $Y''_{\beta\gamma}\ne \emptyset$. Since every $4$-cycle is coloured with at least 3 colours, we have
$$Y'=Y'_{\alpha}\cup Y'_{\alpha\beta}\cup Y'_{\beta\alpha}\cup Y'_{\alpha\gamma}\cup Y'_{\gamma\alpha}\cup Y'_{\omega\alpha}\cup Y'_{\omega\beta}\cup Y'_{\omega\gamma}\cup Y'^{+}_{\omega},$$
$$Y''=Y''_{\beta\gamma}\cup Y''_{\gamma\beta}\cup Y''_{\beta\omega}\cup Y''_{\gamma\omega}\cup Y''_{\omega\beta}\cup Y''_{\omega\gamma}\cup Y''_{\omega_1\omega_2}.$$
For convenience, we denote the vertex in $Y'_{\alpha}\, (\mbox{resp. }Y'_{\alpha\beta},\, Y'_{\beta\alpha},\, Y'_{\alpha\gamma},\, Y'_{\gamma\alpha},\,Y'_{\omega\alpha},\, Y'_{\omega\beta},$
$Y'_{\omega\gamma},\, Y'^{+}_{\omega})$ by $y'_{\alpha}$ $(\mbox{resp. }\ y'_{\alpha\beta},\, y'_{\beta\alpha}, \, y'_{\alpha\gamma},\, y'_{\gamma\alpha},\,y'_{\omega\alpha},\, y'_{\omega\beta},\, y'_{\omega\gamma},\, y'^{+}_{\omega})$, the vertex in $Y''_{\beta\gamma}\,(\mbox{resp. } Y''_{\gamma\beta},\, Y''_{\beta\omega},\, Y''_{\gamma\omega},\, Y''_{\omega\beta},\, Y''_{\omega\gamma},\, Y''_{\omega_1\omega_2})$ by $y''_{\beta\gamma}\,(\mbox{resp. } y''_{\gamma\beta},\, y''_{\beta\omega},\, y''_{\gamma\omega},\,y''_{\omega\beta},\, y''_{\omega\gamma},$ $ y''_{\omega_1\omega_2})$.

If $Y''_{\omega_1\omega_2}\neq\emptyset$, then for any $y'\in Y'\setminus Y'^{+}_{\omega}$, $(y',x_2,y''_{\omega_1\omega_2},x_1)$ is a rainbow $(y',x_1)$-path; for any $y'^{+}_{\omega}\in Y'^{+}_{\omega}$, $(y'^{+}_{\omega},x_2,y''_{\beta\gamma},x_1)$ is a rainbow $(y'^{+}_{\omega},x_1)$-path. This implies that $\{x_1\}$ is a RP-kernel of $D$. So we assume $Y''_{\omega_1\omega_2}=\emptyset$.

If $Y''_{\omega\beta}\cup Y''_{\beta\omega}\neq\emptyset$ and $Y''_{\omega\gamma}\cup Y''_{\gamma\omega}\neq\emptyset$, then for any $y'\in Y'_{\alpha}\cup Y'_{\beta\alpha}\cup Y'_{\gamma\alpha}\cup Y'_{\omega\alpha}\cup Y'^{+}_{\omega}$, $(y',x_2,y''_{\beta\gamma},x_1)$ is a rainbow $(y',x_1)$-path; for any $y'\in Y'_{\alpha\beta}\cup Y'_{\omega\beta}$, $(y',x_2,y'',x_1)$ is a rainbow $(y',x_1)$-path, where $y''\in Y''_{\omega\gamma}\cup Y''_{\gamma\omega}$; for any $y'\in Y'_{\alpha\gamma}\cup Y'_{\omega\gamma}$, $(y',x_2,y'',x_1)$ is a rainbow $(y',x_1)$-path, where $y''\in Y''_{\omega\beta}\cup Y''_{\beta\omega}$. This implies that $\{x_1\}$ is a RP-kernel of $D$.

If exactly one of the subsets $Y''_{\omega\beta}\cup Y''_{\beta\omega}$ and $Y''_{\omega\gamma}\cup Y''_{\gamma\omega}$ is not empty set, w.l.o.g., assume $Y''_{\omega\beta}\cup Y''_{\beta\omega}\neq\emptyset$ and $Y''_{\omega\gamma}\cup Y''_{\gamma\omega}=\emptyset$. Now $Y''=Y''_{\beta\gamma}\cup Y''_{\gamma\beta}\cup Y''_{\beta\omega}\cup Y''_{\omega\beta}$.

For $Y'_{\omega\beta}=\emptyset$ and $Y'_{\alpha\beta}=\emptyset$, now $Y'=Y'_{\alpha}\cup Y'_{\beta\alpha}\cup Y'_{\alpha\gamma}\cup Y'_{\gamma\alpha}\cup Y'_{\omega\alpha}\cup Y'_{\omega\gamma}\cup Y'^{+}_{\omega}$.  For any $y'\in Y'_{\alpha}\cup Y'_{\beta\alpha}\cup Y'_{\gamma\alpha}\cup Y'_{\omega\alpha}\cup Y'^{+}_{\omega}$, $(y',x_2,y''_{\beta\gamma},x_1)$ is a rainbow $(y,x_1)$-path. For any $y'\in Y'_{\alpha\gamma}\cup Y'_{\omega\gamma}$, $(y',x_2,y'',x_1)$ is a rainbow $(y',x_1)$-path, where $y''\in Y''_{\omega\beta}\cup Y''_{\beta\omega}$. This implies that $\{x_1\}$ is a RP-kernel of $D$.

For $Y'_{\omega\beta}\neq\emptyset$ and $Y'_{\alpha\beta}\neq\emptyset$, now $Y'=Y'_{\alpha}\cup Y'_{\alpha\beta}\cup Y'_{\beta\alpha}\cup Y'_{\alpha\gamma}\cup Y'_{\gamma\alpha}\cup Y'_{\omega\alpha}\cup Y'_{\omega\beta}\cup Y'_{\omega\gamma}\cup Y'^{+}_{\omega}$. Note that there exists no rainbow path for any pair of vertices of $Y'_{\omega\beta}\cup Y'_{\alpha\beta}$ since $C(Y'_{\omega\beta}\cup Y'_{\alpha\beta},x_2)=\beta\in C(x_2,Y'')\cup C(Y'',x_1)$. Since every $4$-cycle is coloured with at least $3$ colours, we have $C(x_1,y'_{\omega\beta})\notin C(x_2,y''_{\omega\beta})\cup C(y''_{\beta\omega},x_1)$ for any $y'_{\omega\beta}\in Y'_{\omega\beta}$. For any $y'\in Y'_{\alpha}\cup Y'_{\beta\alpha}\cup Y'_{\alpha\gamma}\cup Y'_{\gamma\alpha}\cup Y'_{\omega\alpha}\cup Y'_{\omega\gamma}$, $(y',x_2,y'',x_1,y'_{\omega\beta})$ is a rainbow $(y',y'_{\omega\beta})$-path, where $y''\in Y''_{\omega\beta}\cup Y''_{\beta\omega}$. For any $y'^{+}_{\omega}\in Y'^{+}_{\omega}$, $(y'^{+}_{\omega},x_2,y''_{\beta\gamma},x_1,y'_{\alpha\beta})$ is a rainbow $(y'^{+}_{\omega},y'_{\alpha\beta})$-path. This implies that $Y'_{\omega\beta}\cup Y'_{\alpha\beta}$ is a RP-kernel of $D$.

For $Y'_{\omega\beta}\neq\emptyset$ and $Y'_{\alpha\beta}=\emptyset$, now $Y'=Y'_{\alpha}\cup Y'_{\beta\alpha}\cup Y'_{\alpha\gamma}\cup Y'_{\gamma\alpha}\cup Y'_{\omega\alpha}\cup Y'_{\omega\beta}\cup Y'_{\omega\gamma}\cup Y'^{+}_{\omega}$. By the proof above, there exists no rainbow path for any pair of vertices of $Y'_{\omega\beta}$ and each vertex of $ Y'_{\alpha}\cup Y'_{\beta\alpha}\cup Y'_{\alpha\gamma}\cup Y'_{\gamma\alpha}\cup Y'_{\omega\alpha}\cup Y'_{\omega\gamma}$ can reach $Y'_{\omega\beta}$ by a rainbow path passing through a vertex of $Y''_{\omega\beta}\cup Y''_{\beta\omega}$. If $|C(x_1,Y'_{\omega\beta})|\geqslant 2$, let $y'_{\omega\beta1}, y'_{\omega\beta2}\in Y'_{\omega\beta}$ with $C(x_1,y'_{\omega\beta1})\neq C(x_1,y'_{\omega\beta2})$. Let $y'^{+}_{\omega}\in Y'^{+}_{\omega}$ be arbitrary. Note that either $(y'^{+}_{\omega},x_2,y''_{\beta\gamma},x_1,y'_{\omega\beta1})$ or $(y'^{+}_{\omega},x_2,y''_{\beta\gamma},x_1,y'_{\omega\beta2})$ is a rainbow $(y'^{+}_{\omega},Y'_{\omega\beta})$-path. This implies that $Y'_{\omega\beta}$ is a RP-kernel of $D$. If $|C(x_1,Y'_{\omega\beta})|=1$,  let $U=\{y'^{+}_{\omega}\in Y'^{+}_{\omega}\,|\,C(y'^{+}_{\omega},x_2)=C(x_1,Y'_{\omega\beta})\}$. For any $y'\in Y'^{+}_{\omega}\setminus U$, $C(y',x_2)\neq C(x_1,Y'_{\omega\beta})$ and $(y',x_2,y''_{\beta\gamma},x_1,y'_{\omega\beta})$ is a rainbow path from $Y'^{+}_{\omega}\setminus U$ to $Y'_{\omega\beta}$. If there exists a vertex $u\in U$ with $C(x_1,u)\neq \beta$, for any $u'\in U\setminus \{u\}$, either $(u',x_2,y''_1,x_1,u)$ or $(u',x_2,y''_2,x_1,u)$ is a rainbow $(u',u)$-path, where $y''_1\in Y''_{\omega\beta}\cup Y''_{\beta\omega}$ and $y''_2\in Y''_{\beta\gamma}\cup Y''_{\gamma\beta}$.  Note that there exists no rainbow path for any pair of vertices of $Y'_{\omega\beta}\cup \{u\}$. This implies that $Y'_{\omega\beta}\cup \{u\}$ is a RP-kernel of $D$. If $C(x_1,U)= \beta$, then there exists no rainbow path for any pair of vertices of $Y'_{\omega\beta}\cup U$. This implies that $Y'_{\omega\beta}\cup U$ is a RP-kernel of $D$.

For $Y'_{\omega\beta}=\emptyset$ and $Y'_{\alpha\beta}\neq\emptyset$, now $Y'=Y'_{\alpha}\cup Y'_{\alpha\beta}\cup Y'_{\beta\alpha}\cup Y'_{\alpha\gamma}\cup Y'_{\gamma\alpha}\cup Y'_{\omega\alpha}\cup Y'_{\omega\gamma}\cup Y'^{+}_{\omega}$.
If $Y'_{\omega\alpha}\neq\emptyset$ or $Y'_{\gamma\alpha}\neq \emptyset$, then for any $y''\in Y''_{\gamma\beta}\cup Y''_{\omega\beta}$, $(y'',x_1,y',x_2)$ is a rainbow $(y'',x_2)$-path, where $y'\in Y'_{\gamma\alpha}\cup Y'_{\omega\alpha}$;  for any $y''\in Y''_{\beta\gamma}\cup Y''_{\beta\omega}$, $(y'',x_1,y'_{\alpha\beta},x_2)$ is a rainbow $(y'',x_2)$-path. This implies that $\{x_2\}$ is a RP-kernel of $D$.
If $Y'_{\omega\alpha}=\emptyset$ and $Y'_{\gamma\alpha}=\emptyset$, now $Y'=Y'_{\alpha}\cup Y'_{\alpha\beta}\cup Y'_{\beta\alpha}\cup Y'_{\alpha\gamma}\cup Y'_{\omega\gamma}\cup Y'^{+}_{\omega}$.
Note that there exists no rainbow path for any pair of vertices of $Y'_{\alpha}\cup Y'_{\alpha\beta}\cup Y'_{\beta\alpha}$. For any $y'\in Y'_{\alpha\gamma}\cup Y'_{\omega\gamma}$, $(y',x_2,y'',x_1,y'_{\alpha\beta})$ is a rainbow $(y',Y'_{\alpha\beta})$-path, where $y''\in Y''_{\omega\beta}\cup Y''_{\beta\omega}$. For any $y'^{+}_{\omega}\in Y'^{+}_{\omega}$, $(y'^{+}_{\omega},x_2,y''_{\beta\gamma},x_1,y'_{\alpha\beta})$ is a rainbow $(y'^{+}_{\omega},Y'_{\alpha\beta})$-path. This implies that $Y'_{\alpha}\cup Y'_{\alpha\beta}\cup Y'_{\beta\alpha}$ is a RP-kernel of $D$.

If $Y''_{\omega\beta}\cup Y''_{\beta\omega}=\emptyset$ and $Y''_{\omega\gamma}\cup Y''_{\gamma\omega}=\emptyset$, then $Y''=Y''_{\beta\gamma}\cup Y''_{\gamma\beta}$. This implies that there exists no rainbow path for any pair of vertices of $Y'_{\omega\beta}\cup Y'_{\omega\gamma}\cup Y'_{\alpha\gamma}\cup Y'_{\alpha\beta}$.

For $Y'_{\alpha\gamma}\cup Y'_{\alpha\beta}\neq \emptyset$ and $Y'_{\omega\beta}\cup Y'_{\omega\gamma}\neq\emptyset$, now $Y'=Y'_{\alpha}\cup Y'_{\alpha\beta}\cup Y'_{\beta\alpha}\cup Y'_{\alpha\gamma}\cup Y'_{\gamma\alpha}\cup Y'_{\omega\alpha}\cup Y'_{\omega\beta}\cup Y'_{\omega\gamma}\cup Y'^{+}_{\omega}$. For any $y'\in Y'\setminus Y'^{+}_{\omega}$, $(y',x_2,y''_{\beta\gamma},x_1,y'_{\omega\beta})$ and $(y',x_2,y''_{\beta\gamma},x_1,y'_{\omega\gamma})$ are rainbow $(y',Y'_{\omega\beta}\cup Y'_{\omega\gamma})$-paths; for any $y'^{+}_{\omega}\in Y'^{+}_{\omega}$, $(y'^{+}_{\omega},x_2,y''_{\beta\gamma},x_1,y'_{\alpha\gamma})$ and $(y'^{+}_{\omega},x_2,y''_{\beta\gamma},x_1,y'_{\alpha\beta})$ are rainbow $(y',Y'_{\alpha\gamma}\cup Y'_{\alpha\beta})$-paths. This implies that $Y'_{\omega\beta}\cup Y'_{\omega\gamma}\cup Y'_{\alpha\gamma}\cup Y'_{\alpha\beta}$ is a RP-kernel of $D$.

For $Y'_{\alpha\gamma}\cup Y'_{\alpha\beta}=\emptyset$ and $Y'_{\omega\beta}\cup Y'_{\omega\gamma}=\emptyset$, now $Y'=Y'_{\alpha}\cup Y'_{\beta\alpha}\cup Y'_{\gamma\alpha}\cup Y'_{\omega\alpha}\cup Y'^{+}_{\omega}$. For any $y'\in Y'$, $(y',x_2,y''_{\beta\gamma},x_1)$ is a rainbow $(y',x_1)$-path. This implies that $\{x_1\}$ is a RP-kernel of $D$.

For $Y'_{\alpha\gamma}\cup Y'_{\alpha\beta}\neq \emptyset$ and $Y'_{\omega\beta}\cup Y'_{\omega\gamma}=\emptyset$, now $Y'=Y'_{\alpha}\cup Y'_{\alpha\beta}\cup Y'_{\beta\alpha}\cup Y'_{\alpha\gamma}\cup Y'_{\gamma\alpha}\cup Y'_{\omega\alpha}\cup Y'^{+}_{\omega}$. If $Y'_{\omega\alpha}=\emptyset$, then $Y'=Y'_{\alpha}\cup Y'_{\alpha\beta}\cup Y'_{\beta\alpha}\cup Y'_{\alpha\gamma}\cup Y'_{\gamma\alpha}\cup  Y'^{+}_{\omega}$. Note that there exists no rainbow path for any pair of vertices of $Y'_{\alpha}\cup Y'_{\alpha\beta}\cup Y'_{\beta\alpha}\cup Y'_{\alpha\gamma}\cup Y'_{\gamma\alpha}$. For any $y'^{+}_{\omega}\in Y'^{+}_{\omega}$, $(y'^{+}_{\omega},x_2,y''_{\beta\gamma},x_1,y'_{\alpha})$ is a rainbow $(y'^{+}_{\omega},y'_{\alpha})$-path. This implies that $Y'_{\alpha}\cup Y'_{\alpha\beta}\cup Y'_{\beta\alpha}\cup Y'_{\alpha\gamma}\cup Y'_{\gamma\alpha}$ is a RP-kernel of $D$. If $Y'_{\omega\alpha}\neq\emptyset$, for any $y''\in Y''$, $(y'',x_1,y'_{\omega\alpha},x_2)$ is a rainbow $(y'',x_2)$-path. This implies that $\{x_2\}$ is a RP-kernel of $D$.

For $Y'_{\alpha\gamma}\cup Y'_{\alpha\beta}=\emptyset$ and $Y'_{\omega\beta}\cup Y'_{\omega\gamma}\neq\emptyset$, now $Y'=Y'_{\alpha}\cup Y'_{\beta\alpha}\cup Y'_{\gamma\alpha}\cup Y'_{\omega\alpha}\cup Y'_{\omega\beta}\cup Y'_{\omega\gamma}\cup Y'^{+}_{\omega}$. For any $y'\in Y'_{\alpha}\cup Y'_{\beta\alpha}\cup Y'_{\gamma\alpha}\cup Y'_{\omega\alpha}$, $(y',x_2,y''_{\beta\gamma},x_1,y'_{\omega\beta})$ and $(y',x_2,y''_{\beta\gamma},x_1,y'_{\omega\gamma})$ are rainbow $(y',Y'_{\omega\beta}\cup Y'_{\omega\gamma})$-paths.
If $|C(x_1,Y'_{\omega\beta}\cup Y'_{\omega\gamma})|\geqslant2$, let $y'_{1}, y'_{2}\in Y'_{\omega\beta}\cup Y'_{\omega\gamma}$ with $C(x_1,y'_{1})\neq C(x_1,y'_{2})$. Let $y'^{+}_{\omega}\in Y'^{+}_{\omega}$ be arbitrary. Note that either $(y'^{+}_{\omega},x_2,y''_{\beta\gamma},x_1,y'_{1})$ or $(y'^{+}_{\omega},x_2,y''_{\beta\gamma},x_1,y'_{2})$ is a rainbow $(y'^{+}_{\omega},Y'_{\omega\beta}\cup Y'_{\omega\gamma})$-path. This implies that $Y'_{\omega\beta}\cup Y'_{\omega\gamma}$ is a RP-kernel of $D$. If $|C(x_1,Y'_{\omega\beta}\cup Y'_{\omega\gamma})|=1$, let $U=\{y'^{+}_{\omega}\in Y'^{+}_{\omega}\,|\,C(y'^{+}_{\omega},x_2)=C(x_1,Y'_{\omega\beta}\cup Y'_{\omega\gamma})\}$. Then for any $y'\in Y'^{+}_{\omega}\setminus U$, we have $C(y',x_2)\neq C(x_1,Y'_{\omega\beta}\cup Y'_{\omega\gamma})$. Note that $(y',x_2,y''_{\beta\gamma},x_1,y'_{\omega\beta})$ and $(y',x_2,y''_{\beta\gamma},x_1,y'_{\omega\gamma})$ are two rainbow $(y',Y'_{\omega\beta}\cup Y'_{\omega\gamma})$-paths. If there exists a vertex $u\in U$ with $C(x_1,u)\notin \{\beta,\gamma,C(x_1,Y'_{\omega\beta}\cup Y'_{\omega\gamma})\}$, then for any $u'\in U\setminus \{u\}$, $(u',x_2,y''_{\beta\gamma},x_1,u)$ is a rainbow $(u',u)$-path. This implies that $Y'_{\omega\beta}\cup Y'_{\omega\gamma}\cup \{u\}$ is a RP-kernel of $D$. If $C(x_1,U)\subseteq \{\beta,\gamma,C(x_1,Y'_{\omega\beta}\cup Y'_{\omega\gamma})\}$, there exists no rainbow path for any pair of vertices of $Y'_{\omega\beta}\cup Y'_{\omega\gamma}\cup U$. This implies that $Y'_{\omega\beta}\cup Y'_{\omega\gamma}\cup U$ is a RP-kernel of $D$.

\vskip 2mm
\noindent \emph{Subcase 2.2.} $Y'\setminus Y^* =Y''\setminus Y^{**}=\emptyset$.
\vskip 2mm

Now $Y'=Y^*$, $Y''=Y^{**}$. This means $C(x_1,y')\neq C(y',x_2)$ and $C(x_2,y'')\neq C(y'',x_1)$ for any $y'\in Y'$ and $y''\in Y''$. Let $y'\in Y'$ be arbitrary. Assume $C(x_1,y')=\alpha$ and $C(y',x_2)=\beta$.

Define the following vertex subsets, see Figure 3 in which a box represent a set of vertices, and dotted arcs, dashed arcs, solid arcs represent respectively the arcs coloured by $\alpha$, $\beta$ and a colour not in $\{\alpha,\beta\}$.
$$Y'_{\alpha\beta}=\{y'\in Y'\,|\,C(x_1,y')=\alpha,\, C(y',x_2)=\beta\},$$
$$Y'_{\beta\alpha}=\{y'\in Y'\,|\,C(x_1,y')=\beta,\, C(y',x_2)=\alpha\},$$
$$Y'_{\alpha\omega}=\{y'\in Y'\,|\,C(x_1,y')=\alpha, \,C(y',x_2)\notin\{\alpha,\beta\}\},$$
$$Y'_{\beta\omega}=\{y'\in Y'\,|\,C(x_1,y')=\beta,\, C(y',x_2)\notin\{\alpha,\beta\}\},$$
$$Y'_{\omega\alpha}=\{y'\in Y'\,|\,C(x_1,y')\notin\{\alpha,\beta\},\, C(y',x_2)=\alpha\},$$
$$Y'_{\omega\beta}=\{y'\in Y'\,|\,C(x_1,y')\notin\{\alpha,\beta\},\, C(y',x_2)=\beta\},$$
$$Y'_{\omega_1\omega_2}=\{y'\in Y'\,|\,C(x_1,y'), C(y',x_2)\notin\{\alpha,\beta\}\mbox{ and }C(x_1,y')\neq C(y',x_2)\}.$$

Since every $4$-cycle is coloured with at least $3$ colours and $Y'_{\alpha\beta}\neq\emptyset$, $Y''$ can be divided into the following vertex subsets.
$$Y''_{\alpha\omega}=\{y''\in Y''\,|\,C(x_2,y'')=\alpha, \,C(y'',x_1)\notin\{\alpha,\beta\}\},$$
$$Y''_{\beta\omega}=\{y''\in Y''\,|\,C(x_2,y'')=\beta, \,C(y'',x_1)\notin\{\alpha,\beta\}\},$$
$$Y''_{\omega\alpha}=\{y''\in Y''\,|\,C(x_2,y'')\notin\{\alpha,\beta\},\, C(y'',x_1)=\alpha\},$$
$$Y''_{\omega\beta}=\{y''\in Y''\,|\,C(x_2,y'')\notin\{\alpha,\beta\},\, C(y'',x_1)=\beta\},$$
$$Y''_{\omega_1\omega_2}=\{y''\in Y''\,|\,C(x_2,y''), C(y'',x_1)\notin\{\alpha,\beta\}\mbox{ and }C(x_2,y'')\neq C(y'',x_1)\}.$$
\begin {figure}[h]
\unitlength0.4cm
\begin{center}
\begin{picture}(28,12)
\put(16,11){\circle*{.3}}
\put(15.9,11.5){$x_1$}
\put(16,0){\circle*{.3}}
\put(15.9,-0.7){$x_2$}
\put(1,5){\framebox(1.4,1)}
\put(1,5.25){\small{$Y'_{\omega_1\omega_2}$}}
\put(3,5){\framebox(1.4,1)}
\put(3,5.25){\small{$Y'_{\omega\beta}$}}
\put(5,5){\framebox(1.4,1)}
\put(5,5.25){\small{$Y'_{\beta\omega}$}}
\put(7,5){\framebox(1.4,1)}
\put(7,5.25){\small{$Y'_{\omega\alpha}$}}
\put(9,5){\framebox(1.4,1)}
\put(9,5.25){\small{$Y'_{\alpha\omega}$}}
\put(11,5){\framebox(1.4,1)}
\put(11,5.25){\small{$Y'_{\beta\alpha}$}}
\put(13,5){\framebox(1.4,1)}
\put(13,5.25){\small{$Y'_{\alpha\beta}$}}
%Y' NINE \omega=dash \alpha=line \beta=slash%
\qbezier(16,11)(8.75,8.5)(1.5,6)%$x_1, Y'_{\omega_1\omega_2}%
\put(7.3,8){\vector(-3,-1){.01}}
\qbezier(16,0)(8.75,2.5)(1.5,5)%$ Y'_{\omega_1\omega_2}x_2%
\put(7.3,3){\vector(3,-1){.01}}
\qbezier(16,11)(9.75,8.5)(3.5,6)%$x_1, Y'_{\omega\beta}%
\put(8.5,8){\vector(-2,-1){.01}}
%\qbezier(16,0)(9.75,2.5)(3.5,5)%$ Y'_{\omega\beta}x_2%
\multiput(3.5,5)(.02,-.01){15}{\line(1,0){.037182}}
\multiput(4,4.8)(.02,-.01){15}{\line(1,0){.037182}}
\multiput(4.5,4.6)(.02,-.01){15}{\line(1,0){.037182}}
\multiput(5,4.4)(.02,-.01){15}{\line(1,0){.037182}}
\multiput(5.5,4.2)(.02,-.01){15}{\line(1,0){.037182}}
\multiput(6,4)(.02,-.01){15}{\line(1,0){.037182}}
\multiput(6.5,3.8)(.02,-.01){15}{\line(1,0){.037182}}
\multiput(7,3.6)(.02,-.01){15}{\line(1,0){.037182}}
\multiput(7.5,3.4)(.02,-.01){15}{\line(1,0){.037182}}
\multiput(8,3.2)(.02,-.01){15}{\line(1,0){.037182}}
\multiput(8.5,3)(.02,-.01){15}{\line(1,0){.037182}}
\multiput(9,2.8)(.02,-.01){15}{\line(1,0){.037182}}
\multiput(9.5,2.6)(.02,-.01){15}{\line(1,0){.037182}}
\multiput(10,2.4)(.02,-.01){15}{\line(1,0){.037182}}
\multiput(10.5,2.2)(.02,-.01){15}{\line(1,0){.037182}}
\multiput(11,2)(.02,-.01){15}{\line(1,0){.037182}}
\multiput(11.5,1.8)(.02,-.01){15}{\line(1,0){.037182}}
\multiput(12,1.6)(.02,-.01){15}{\line(1,0){.037182}}
\multiput(12.5,1.4)(.02,-.01){15}{\line(1,0){.037182}}
\multiput(13,1.2)(.02,-.01){15}{\line(1,0){.037182}}
\multiput(13.5,1)(.02,-.01){15}{\line(1,0){.037182}}
\multiput(14,0.8)(.02,-.01){15}{\line(1,0){.037182}}
\multiput(14.5,0.6)(.02,-.01){15}{\line(1,0){.037182}}
\multiput(15,0.4)(.02,-.01){15}{\line(1,0){.037182}}
\multiput(15.5,0.2)(.02,-.01){15}{\line(1,0){.037182}}

\put(8.5,3){\vector(2,-1){.01}}
%\qbezier(16,11)(10.75,8.5)(5.5,6)%$x_1, Y'_{\beta\omega}%
\multiput(16,11)(-.02,-.01){15}{\line(1,0){.037182}}
\multiput(15.55,10.8)(-.02,-.01){15}{\line(1,0){.037182}}
\multiput(15.13,10.6)(-.02,-.01){15}{\line(1,0){.037182}}
\multiput(14.7,10.4)(-.02,-.01){15}{\line(1,0){.037182}}
\multiput(14.29,10.2)(-.02,-.01){15}{\line(1,0){.037182}}
\multiput(13.87,10)(-.02,-.01){15}{\line(1,0){.037182}}
\multiput(13.45,9.8)(-.02,-.01){15}{\line(1,0){.037182}}
\multiput(13.03,9.6)(-.02,-.01){15}{\line(1,0){.037182}}
\multiput(12.61,9.4)(-.02,-.01){15}{\line(1,0){.037182}}
\multiput(12.19,9.2)(-.02,-.01){15}{\line(1,0){.037182}}
\multiput(11.77,9)(-.02,-.01){15}{\line(1,0){.037182}}
\multiput(11.35,8.8)(-.02,-.01){15}{\line(1,0){.037182}}
\multiput(10.93,8.6)(-.02,-.01){15}{\line(1,0){.037182}}
\multiput(10.51,8.4)(-.02,-.01){15}{\line(1,0){.037182}}
\multiput(10.09,8.2)(-.02,-.01){15}{\line(1,0){.037182}}
\multiput(9.67,8)(-.02,-.01){15}{\line(1,0){.037182}}
\multiput(9.25,7.8)(-.02,-.01){15}{\line(1,0){.037182}}
\multiput(8.83,7.6)(-.02,-.01){15}{\line(1,0){.037182}}
\multiput(8.41,7.4)(-.02,-.01){15}{\line(1,0){.037182}}
\multiput(7.99,7.2)(-.02,-.01){15}{\line(1,0){.037182}}
\multiput(7.57,7)(-.02,-.01){15}{\line(1,0){.037182}}
\multiput(7.15,6.8)(-.02,-.01){15}{\line(1,0){.037182}}
\multiput(6.73,6.6)(-.02,-.01){15}{\line(1,0){.037182}}
\multiput(6.31,6.4)(-.02,-.01){15}{\line(1,0){.037182}}
\multiput(5.89,6.2)(-.02,-.01){15}{\line(1,0){.037182}}

\put(9.7,8){\vector(-2,-1){.01}}

\qbezier(16,0)(10.75,2.5)(5.5,5)%$Y'_{\beta\omega}x_2%
\put(9.7,3){\vector(2,-1){.01}}
\qbezier(16,11)(11.75,8.5)(7.5,6)%$x_1, Y'_{\omega\alpha}%
\put(10.9,8){\vector(-3,-2){.01}}
\qbezier[60](16,0)(11.75,2.5)(7.5,5)%$, Y'_{\omega\alpha}x_2%
\put(10.9,3){\vector(3,-2){.01}}
\qbezier[60](16,11)(12.75,8.5)(9.5,6)%$x_1, Y'_{\alpha\omega}%
\put(12.1,8){\vector(-4,-3){.01}}
\qbezier(16,0)(12.75,2.5)(9.5,5)%$, Y'_{\alpha\omega}x_2%
\put(12.1,3){\vector(4,-3){.01}}
%\qbezier(16,11)(13.75,8.5)(11.5,6)%$x_1, Y'_{\beta\alpha}%
\multiput(16,11)(-.01,-.01){20}{\line(1,0){.037182}}
\multiput(15.73,10.7)(-.01,-.01){20}{\line(1,0){.037182}}
\multiput(15.46,10.4)(-.01,-.01){20}{\line(1,0){.037182}}
\multiput(15.19,10.1)(-.01,-.01){20}{\line(1,0){.037182}}
\multiput(14.92,9.8)(-.01,-.01){20}{\line(1,0){.037182}}
\multiput(14.65,9.5)(-.01,-.01){20}{\line(1,0){.037182}}
\multiput(14.38,9.2)(-.01,-.01){20}{\line(1,0){.037182}}
\multiput(14.11,8.9)(-.01,-.01){20}{\line(1,0){.037182}}
\multiput(13.84,8.6)(-.01,-.01){20}{\line(1,0){.037182}}
\multiput(13.57,8.3)(-.01,-.01){20}{\line(1,0){.037182}}
\multiput(13.3,8)(-.01,-.01){20}{\line(1,0){.037182}}
\multiput(13.03,7.7)(-.01,-.01){20}{\line(1,0){.037182}}
\multiput(12.76,7.4)(-.01,-.01){20}{\line(1,0){.037182}}
\multiput(12.49,7.1)(-.01,-.01){20}{\line(1,0){.037182}}
\multiput(12.22,6.8)(-.01,-.01){20}{\line(1,0){.037182}}
\multiput(11.95,6.5)(-.01,-.01){20}{\line(1,0){.037182}}
\multiput(11.68,6.2)(-.01,-.01){20}{\line(1,0){.037182}}

\put(13.3,8){\vector(-1,-1){.01}}
\qbezier[60](16,0)(13.75,2.5)(11.5,5)%$, Y'_{\beta\alpha}x_2%
\put(13.3,3){\vector(1,-1){.01}}
\qbezier[60](16,11)(14.75,8.5)(13.5,6)%$x_1, Y'_{\alpha\beta}%
\put(14.5,8){\vector(-1,-2){.01}}
%\qbezier(16,0)(14.75,2.5)(13.5,5)%$, Y'_{\alpha\beta}x_2%
\multiput(13.5,5)(.01,-.02){20}{\line(1,0){.037182}}
\multiput(13.75,4.5)(.01,-.02){20}{\line(1,0){.037182}}
\multiput(14,4)(.01,-.02){20}{\line(1,0){.037182}}
\multiput(14.25,3.5)(.01,-.02){20}{\line(1,0){.037182}}
\multiput(14.5,3)(.01,-.02){20}{\line(1,0){.037182}}
\multiput(14.75,2.5)(.01,-.02){20}{\line(1,0){.037182}}
\multiput(15,2)(.01,-.02){20}{\line(1,0){.037182}}
\multiput(15.25,1.5)(.01,-.02){20}{\line(1,0){.037182}}
\multiput(15.5,1)(.01,-.02){20}{\line(1,0){.037182}}
\multiput(15.75,0.5)(.01,-.02){20}{\line(1,0){.037182}}

\put(14.5,3){\vector(1,-2){.01}}

\put(18,5){\framebox(1.4,1)}
\put(18,5.25){\small{$Y''_{\alpha\omega}$}}
\put(20,5){\framebox(1.4,1)}
\put(20,5.25){\small{$Y''_{\beta\omega}$}}
\put(22,5){\framebox(1.4,1)}
\put(22,5.25){\small{$Y''_{\omega\alpha}$}}
\put(24,5){\framebox(1.4,1)}
\put(24,5.25){\small{$Y''_{\omega\beta}$}}
\put(26,5){\framebox(1.4,1)}
\put(26,5.25){\small{$Y''_{\omega_1\omega_2}$}}
%%$Y'' FOUR$
\qbezier(16,11)(21.25,8.5)(26.5,6)%$ Y''_{\omega_1\omega_2}x_1,%
\put(22.3,8){\vector(-2,1){.01}}
\qbezier(16,0)(21.25,2.5)(26.5,5)%$x_2, Y''_{\omega_1\omega_2}%
\put(22.3,3){\vector(2,1){.01}}
\qbezier(16,11)(17.25,8.5)(18.5,6)%$ Y''_{\alpha\omega}x_1,%
\put(17.5,8){\vector(-1,2){.01}}
\qbezier[40](16,0)(17.25,2.5)(18.5,5)%$x_2, Y''_{\alpha\omega}%
\put(17.5,3){\vector(1,2){.01}}
\qbezier(16,11)(18.25,8.5)(20.5,6)%$ Y''_{\beta\omega}x_1%
\put(18.7,8){\vector(-1,1){.01}}
%\qbezier(16,0)(18.25,2.5)(20.5,5)%$x_2, Y''_{\beta\omega}%
\multiput(16,0)(.01,.01){20}{\line(1,0){.037182}}
\multiput(16.25,0.3)(.01,.01){20}{\line(1,0){.037182}}
\multiput(16.52,0.6)(.01,.01){20}{\line(1,0){.037182}}
\multiput(16.79,0.9)(.01,.01){20}{\line(1,0){.037182}}
\multiput(17.06,1.2)(.01,.01){20}{\line(1,0){.037182}}
\multiput(17.33,1.5)(.01,.01){20}{\line(1,0){.037182}}
\multiput(17.6,1.8)(.01,.01){20}{\line(1,0){.037182}}
\multiput(17.87,2.1)(.01,.01){20}{\line(1,0){.037182}}
\multiput(18.14,2.4)(.01,.01){20}{\line(1,0){.037182}}
\multiput(18.41,2.7)(.01,.01){20}{\line(1,0){.037182}}
\multiput(18.68,3)(.01,.01){20}{\line(1,0){.037182}}
\multiput(18.95,3.3)(.01,.01){20}{\line(1,0){.037182}}
\multiput(19.22,3.6)(.01,.01){20}{\line(1,0){.037182}}
\multiput(19.49,3.9)(.01,.01){20}{\line(1,0){.037182}}
\multiput(19.76,4.2)(.01,.01){20}{\line(1,0){.037182}}
\multiput(20.03,4.5)(.01,.01){20}{\line(1,0){.037182}}
\multiput(20.3,4.8)(.01,.01){20}{\line(1,0){.037182}}

\put(18.7,3){\vector(1,1){.01}}
\qbezier[50](16,11)(19.75,8.5)(22.5,6)%$ Y''_{\omega\alpha}x_1%
\put(20.15,8){\vector(-4,3){.01}}
\qbezier(16,0)(19.75,2.5)(22.5,5)%$x_2, Y''_{\omega\alpha}%
\put(20.15,3){\vector(4,3){.01}}
%\qbezier(16,11)(20.75,8.5)(24.5,6)%$Y''_{\omega\beta}x_1%
\multiput(24.5,6)(-.03,.02){10}{\line(1,0){.05}}
\multiput(24.05,6.3)(-.03,.02){10}{\line(1,0){.05}}
\multiput(23.55,6.6)(-.03,.02){10}{\line(1,0){.05}}
\multiput(23.05,6.9)(-.03,.02){10}{\line(1,0){.05}}
\multiput(22.6,7.2)(-.03,.02){10}{\line(1,0){.05}}
\multiput(22.1,7.5)(-.03,.02){10}{\line(1,0){.05}}
\multiput(21.6,7.8)(-.03,.02){10}{\line(1,0){.05}}
\multiput(21.1,8.1)(-.03,.02){10}{\line(1,0){.05}}
\multiput(20.6,8.4)(-.03,.02){10}{\line(1,0){.05}}
\multiput(20.1,8.7)(-.03,.02){10}{\line(1,0){.05}}
\multiput(19.6,9)(-.03,.02){10}{\line(1,0){.05}}
\multiput(19.1,9.3)(-.03,.02){10}{\line(1,0){.05}}
\multiput(18.55,9.6)(-.03,.02){10}{\line(1,0){.05}}
\multiput(18,9.9)(-.03,.02){10}{\line(1,0){.05}}
\multiput(17.45,10.2)(-.03,.02){10}{\line(1,0){.05}}
\multiput(16.9,10.5)(-.03,.02){10}{\line(1,0){.05}}
\multiput(16.3,10.8)(-.03,.02){10}{\line(1,0){.05}}
\put(21.35,8){\vector(-3,2){.01}}
\qbezier(16,0)(20.75,2.5)(24.5,5)%$x_2, Y''_{\omega\beta}%
\put(21.35,3){\vector(3,2){.01}}
\end{picture}
\caption{An arc-coloured bipartite tournament for subcase 2.2 of the proof of Theorem \ref{z3.2}.}
\end{center}
\end{figure}

Now,
$$Y'=Y'_{\alpha\beta}\cup Y'_{\beta\alpha}\cup Y'_{\alpha\omega}\cup Y'_{\beta\omega}\cup Y'_{\omega\alpha}\cup Y'_{\omega\beta}\cup Y'_{\omega_1\omega_2},$$
$$Y''=Y''_{\alpha\omega}\cup Y''_{\beta\omega}\cup Y''_{\omega\alpha}\cup Y''_{\omega\beta}\cup Y''_{\omega_1\omega_2}.$$
For convenience, we denote the vertex in $Y'_{\alpha\beta}\,(\mbox{resp. } Y'_{\beta\alpha},\, Y'_{\alpha\omega},\, Y'_{\beta\omega},\, Y'_{\omega\alpha},\, Y'_{\omega\beta},\,$ $ Y'_{\omega_1\omega_2})$ by $y'_{\alpha\beta}(\mbox{resp. } y'_{\beta\alpha},\, y'_{\alpha\omega},\, y'_{\beta\omega},\, y'_{\omega\alpha},\, y'_{\omega\beta},\, y'_{\omega_1\omega_2})$, the vertex in $Y''_{\alpha\omega}\,(\mbox{resp. }$ $Y''_{\beta\omega},\,Y''_{\omega\alpha},\, Y''_{\omega\beta},$ $Y''_{\omega_1\omega_2}$) by $y''_{\alpha\omega}$ (resp. $y''_{\beta\omega},$ $y''_{\omega\alpha}$, $y''_{\omega\beta}$, $y''_{\omega_1\omega_2}$).

If $Y'_{\omega_1\omega_2}\neq\emptyset$ or $Y'_{\omega_1\omega_2}\cup Y''_{\omega\alpha}\cup Y''_{\omega\beta}=\emptyset$, then for any $y''\in Y''_{\omega\alpha}\cup Y''_{\omega\beta}$, $(y'',x_1,y'_{\omega_1\omega_2},x_2)$ is a rainbow $(y'',x_2)$-path; for any $y''\in Y''_{\alpha\omega}\cup Y''_{\beta\omega}\cup Y''_{\omega_1\omega_2}$, $(y'',x_1,y'_{\alpha\beta},x_2)$ is a rainbow $(y'',x_2)$-path. This implies that $\{x_2\}$ is a RP-kernel of $D$. So we assume $Y'_{\omega_1\omega_2}=\emptyset$ and $Y''_{\omega\alpha}\cup Y''_{\omega\beta}\neq \emptyset$.

If $Y''_{\omega\alpha}\neq\emptyset$ and $Y''_{\omega\beta}\neq\emptyset$, then $Y''=Y''_{\alpha\omega}\cup Y''_{\beta\omega}\cup Y''_{\omega\beta}\cup Y''_{\omega\alpha}\cup Y''_{\omega_1\omega_2}$. For any $y'\in Y'_{\alpha\beta}\cup Y'_{\omega\beta}$, $(y',x_2,y''_{\omega\alpha},x_1)$ is a rainbow $(y',x_1)$-path. For any $y'\in Y'_{\beta\alpha}\cup Y'_{\omega\alpha}$, $(y',x_2,y''_{\omega\beta},x_1)$ is a rainbow $(y',x_1)$-path. Since every $4$-cycle is coloured with at least $3$ colours, we have $C(y'_{\alpha\omega},x_2)\notin C(x_2,Y''_{\omega\alpha})$ for any $y'_{\alpha\omega}\in Y'_{\alpha\omega}$ and $C(y'_{\beta\omega},x_2)\notin C(x_2,Y''_{\omega\beta})$ for any $y'_{\beta\omega}\in Y'_{\beta\omega}$. It follows that $(y'_{\alpha\omega},x_2,y''_{\omega\alpha},x_1)$ is a rainbow $(y'_{\alpha\omega},x_1)$-path and $(y'_{\beta\omega},x_2,y''_{\omega\beta},x_1)$ is a rainbow $(y'_{\beta\omega},x_1)$-path. This implies that $\{x_1\}$ is a RP-kernel of $D$.

If exactly one of the subsets $Y''_{\omega\alpha}$ and $Y''_{\omega\beta}$ is not empty set, w.l.o.g., assume $Y''_{\omega\alpha}\neq\emptyset$ and $Y''_{\omega\beta}=\emptyset$. Now, $Y''=Y''_{\alpha\omega}\cup Y''_{\beta\omega}\cup Y''_{\omega\alpha}\cup Y''_{\omega_1\omega_2}$.

For $Y'_{\beta\omega}\cup Y'_{\omega\beta}\neq\emptyset$, we see that for any $y''\in Y''_{\alpha\omega}\cup Y''_{\beta\omega}\cup Y''_{\omega_1\omega_2}$, $(y'',x_1,$ $y'_{\alpha\beta},$ $x_2)$ is a rainbow $(y'',x_2)$-path; for any $y''_{\omega\alpha}\in Y''_{\omega\alpha}$, $(y''_{\omega\alpha},x_1,y',x_2)$ is a rainbow $(y''_{\omega\alpha},x_2)$-path, where $y'\in Y'_{\beta\omega}\cup Y'_{\omega\beta}$. This implies that $\{x_2\}$ is a RP-kernel of $D$.

For $Y''_{\beta\omega}\neq\emptyset$, we see that for any $y'\in Y'_{\beta\alpha}\cup Y'_{\omega\alpha}\cup Y'_{\beta\omega}$, $(y',x_2,y''_{\beta\omega},x_1)$ is a rainbow $(y',x_1)$-path; for any $y'\in Y'_{\alpha\beta}\cup Y'_{\alpha\omega}\cup Y'_{\omega\beta}$, $(y',x_2,y''_{\omega\alpha},x_1)$ is a rainbow $(y',x_1)$-path. This implies that $\{x_1\}$ is a RP-kernel of $D$.

For $Y'_{\beta\omega}\cup Y'_{\omega\beta}=\emptyset$ and $Y''_{\beta\omega}=\emptyset$, we see that $Y'=Y'_{\alpha\beta}\cup Y'_{\beta\alpha}\cup Y'_{\alpha\omega}\cup Y'_{\omega\alpha}$ and $Y''=Y''_{\alpha\omega}\cup Y''_{\omega\alpha}\cup Y''_{\omega_1\omega_2}$. If $Y''_{\omega_1\omega_2}\neq\emptyset$, then for any $y'\in Y'\setminus Y'_{\alpha\omega}$, $(y',x_2,y''_{\omega_1\omega_2},x_1)$ is a rainbow $(y',x_1)$-path; for any $y'_{\alpha\omega}\in Y'_{\alpha\omega}$, $(y'_{\alpha\omega},x_2,y''_{\alpha\omega},x_1)$ is a rainbow $(y'_{\alpha\omega},x_1)$-path. This implies that $\{x_1\}$ is a RP-kernel of $D$. If $Y''_{\omega_1\omega_2}=\emptyset$, then $Y''=Y''_{\alpha\omega}\cup Y''_{\omega\alpha}$. For any $y'\in Y'_{\alpha\beta}\cup Y'_{\beta\alpha}\cup Y'_{\omega\alpha}$, $(y',x_2,y''_{\omega\alpha})$ is a rainbow $(y',y''_{\omega\alpha})$-path. By the proof above, for any $y'_{\alpha\omega}\in Y'_{\alpha\omega}$, $(y'_{\alpha\omega},x_2,y''_{\omega\alpha})$ is a rainbow $(y'_{\alpha\omega},y''_{\omega\alpha})$-path. If $|C(x_2,Y''_{\omega\alpha})|\geqslant 2$, let $y''_{\omega\alpha1},y''_{\omega\alpha2}\in Y''_{\omega\alpha}$ with $C(x_2,y''_{\omega\alpha1})\neq C(x_2,y''_{\omega\alpha2})$. For any $y''_{\alpha\omega}\in Y''_{\alpha\omega}$, either $(y''_{\alpha\omega},x_1,y'_{\alpha\beta},x_2,y''_{\omega\alpha1})$ or $(y''_{\alpha\omega},x_1,y'_{\alpha\beta},x_2,y''_{\omega\alpha2})$ is a rainbow $(y''_{\alpha\omega}, y''_{\omega\alpha})$-path. This implies that $Y''_{\omega\alpha}$ is a RP-kernel of $D$. If $|C(x_2,Y''_{\omega\alpha})|=1$, let $U=\{y''_{\alpha\omega}\in Y''_{\alpha\omega}\,|\,C(y''_{\alpha\omega},x_1)=C(x_2,Y''_{\omega\alpha})\}$. Note that there exists no rainbow path for any pair of vertices of $Y''_{\omega\alpha}\cup U$. For any $y''\in Y''_{\alpha\omega}\setminus U$, we have $C(y'',x_1)\neq C(x_2,Y''_{\omega\alpha})$ and $(y'',x_1,y'_{\alpha\beta},x_2,y''_{\omega\alpha})$ is a rainbow $(y'',y''_{\omega\alpha})$-path. This implies that $Y''_{\omega\alpha}\cup U$ is a RP-kernel of $D$.

In any case, we can find a RP-kernel of $D$. This proof is complete.
\end{proof}

By Theorem \ref{z3.2}, the following corollary is immediate.

\begin{corollary}
Let $D=(X,Y)$ be an $m$-arc-coloured bipartite tournament with $\min\{|X|,|Y|\}=2$. If every $4$-cycle contained in $D$ is rainbow, then $D$ has a RP-kernel.
\end{corollary}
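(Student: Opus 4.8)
The plan is to deduce this corollary directly from Theorem \ref{z3.2} by observing that its hypothesis is merely a strengthening of the colouring condition assumed there. First I would unwind the definition of \emph{rainbow}: an arc-coloured subdigraph is rainbow precisely when all of its arcs receive pairwise distinct colours. A $4$-cycle has exactly four arcs, so a rainbow $4$-cycle carries four distinct colours and is in particular coloured with at least $3$ colours. Hence the assumption ``every $4$-cycle contained in $D$ is rainbow'' immediately implies ``every $4$-cycle contained in $D$ is coloured with at least $3$ colours.''

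With this implication in hand, the second and final step is simply to apply Theorem \ref{z3.2}. Since $D=(X,Y)$ is an $m$-arc-coloured bipartite tournament with $\min\{|X|,|Y|\}=2$ and, by the step above, every $4$-cycle of $D$ is coloured with at least $3$ colours, Theorem \ref{z3.2} guarantees that $D$ has a RP-kernel. There is essentially no obstacle here: the entire content is the elementary observation that four distinct colours on a $4$-cycle is a stronger requirement than three, so none of the combinatorial work in the proof of Theorem \ref{z3.2}—in particular the case analysis on the partition of $Y$ into the various colour classes $Y'_{\alpha\beta}, Y''_{\beta\gamma}$, and so on—needs to be redone. The corollary is thus a weaker but more memorable special case, recorded for convenience.
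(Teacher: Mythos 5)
Your proposal is correct and matches the paper exactly: the paper also derives this corollary immediately from Theorem \ref{z3.2}, using the same observation that a rainbow $4$-cycle uses four distinct colours and hence at least three.
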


\begin{remark}
The condition ``every $4$-cycle is coloured with at least $3$ colours" in Theorem \ref{z3.2} cannot be reduced. An arc-coloured bipartite tournament with $|X|=2$ shown in Figure $4$ satisfying ``every $4$-cycle is $2$-arc-coloured" has no RP-kernel, in which solid arcs and dotted arcs are represent arcs coloured by two distinct colours. Large $m$-arc-coloured bipartite tournaments with no RP-kernel can be obtained by adding new vertices to $Y$ and new colours such that these new vertices dominate completely $X$.
\end{remark}

\begin {figure}[h]
\unitlength0.4cm
\begin{center}
\begin{picture}(10,7.5)
\put(3,6){\circle*{.3}}
\put(2.9,6.5){$x_1$}
\put(1,1){\circle*{.3}}
\put(0.9,0){$y_1$}
\put(7,6){\circle*{.3}}
\put(6.9,6.5){$x_2$}
\put(5,1){\circle*{.3}}
\put(4.9,0){$y_2$}
\put(9,1){\circle*{.3}}
\put(8.9,0){$y_3$}
\qbezier(3,6)(4,3.5)(5,1)
\put(4.4,2.5){\vector(1,-2){.01}}
\qbezier(1,1)(2,3.5)(3,6)
\put(1.6,2.5){\vector(-1,-2){.01}}
\qbezier(7,6)(8,3.5)(9,1)
\put(8.4,2.5){\vector(1,-2){.01}}
\qbezier[60](5,1)(6,3.5)(7,6)
\put(6.4,4.5){\vector(1,2){.01}}
\qbezier[60](1,1)(4,3.5)(7,6)
\put(5.4,4.7){\vector(1,1){.01}}

\qbezier[60](9,1)(6,3.5)(3,6)
\put(4.6,4.7){\vector(-1,1){.01}}
\end{picture}
\caption{ An arc-coloured bipartite tournament with $|X|=2$ satisfying ``every $4$-cycle is $2$-arc-coloured" has no RP-kernel.}
\end{center}
\end{figure}
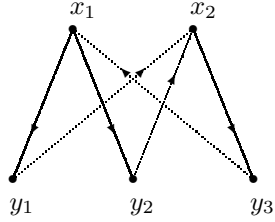

In the following proof, we consider $\min\{|X|,|Y|\}\geqslant3$.

\begin{lemma}\cite{7}\label{z3.3}
Let $D=(X,Y)$ be a bipartite tournament. Then the following statements hold:

(a) let $C=(u_0,u_1,u_2,\ldots,u_n)$ be a walk in $D$. For $\{i,j\}\subseteq \{1,2,\ldots,n\}$, $u_i,u_j$ are adjacent if and only if $j-i\equiv 1\,(\emph{mod } 2)$.

(b) every closed walk of length at most 6 is a cycle of $D$.
\end{lemma}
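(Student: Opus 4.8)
The plan is to reduce both parts to a single structural fact: in a bipartite tournament two vertices are adjacent if and only if they lie in different parts of the bipartition $\{X,Y\}$, together with the absence of $2$-cycles, since there is exactly one arc between a vertex of $X$ and a vertex of $Y$.

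For part (a), I would first record that along any walk $C=(u_0,u_1,\ldots,u_n)$ each arc $(u_k,u_{k+1})$ joins the two parts, so consecutive vertices lie in opposite parts. An immediate induction then shows that $u_i$ and $u_j$ lie in the same part precisely when $i\equiv j\pmod 2$, and in opposite parts precisely when $j-i\equiv 1\pmod 2$. Since adjacency in $D$ is exactly the relation of lying in opposite parts, this yields $u_i,u_j$ adjacent if and only if $j-i\equiv 1\pmod 2$, which is the claim.

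For part (b), I would first draw two consequences of the above. Any closed walk $(u_0,u_1,\ldots,u_n=u_0)$ has $u_0=u_n$ in the same part, so by part (a) its length $n$ is even; and there is no closed walk of length $2$, since such a walk $(u_0,u_1,u_0)$ would require both arcs $(u_0,u_1)$ and $(u_1,u_0)$, contradicting that $D$ has exactly one arc between adjacent vertices. Hence the only closed walks of length at most $6$ have length $4$ or $6$. The key idea is that a repeated vertex always produces a shorter closed subwalk: if $u_i=u_j$ with $0\le i<j\le n-1$, then the segment $u_i\to u_{i+1}\to\cdots\to u_j$ and its complement $u_j\to u_{j+1}\to\cdots\to u_n=u_0\to u_1\to\cdots\to u_i$ are closed walks of lengths $j-i$ and $n-(j-i)$ respectively, both even.

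I would then finish by ruling out repeated vertices case by case. For a closed walk of length $4$, a repeat forces $j-i=2$, giving a closed subwalk of length $2$, which is impossible; hence $u_0,u_1,u_2,u_3$ are distinct and the walk is a $4$-cycle. For a closed walk of length $6$, a repeat forces $j-i\in\{2,4\}$; in the first case we obtain a length-$2$ closed subwalk directly, and in the second case the complementary closed subwalk has length $6-4=2$, so either way we reach a contradiction, whence $u_0,\ldots,u_5$ are distinct and the walk is a $6$-cycle. The only point needing genuine care, rather than a one-line parity remark, is the length-$6$ coincidence $u_1=u_5$ (that is, $j-i=4$), and this is exactly what the complementary-subwalk argument disposes of; everything else is immediate from the adjacency characterisation in part (a).
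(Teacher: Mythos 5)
Your proof is correct. Note that the paper itself gives no argument for this lemma; it is quoted from reference \cite{7} without proof, so there is nothing in-paper to compare against. Your argument is the standard one: parity of positions along a walk determines the side of the bipartition and hence adjacency for (a), and for (b) the observation that any repeated vertex in a closed walk of even length $4$ or $6$ would force a closed subwalk (or complementary subwalk) of length $2$, which cannot exist since a bipartite tournament has exactly one arc between adjacent vertices. The one point that genuinely needs the complementary-subwalk device, the coincidence $u_i=u_j$ with $j-i=4$ in a closed walk of length $6$, is handled correctly.
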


Let $CB_5$ be a bipartite tournament, which has $V(CB_5)=\{u_1,u_2,u_3,u_4,u_5\}$ and $A(CB_5)=\{(u_1,u_2),(u_2,u_3),(u_3,u_4),(u_4,u_5),(u_4,u_1),(u_5,u_2)\}$. Let $TB_4$ be a bipartite tournament, which has $V(TB_4)=\{u_1,u_2,u_3,u_4\}$ and $A(TB_4)=\{(u_1,u_2),(u_2,u_3),(u_3,u_4),(u_1,u_4)\}$. See Figure $5$.

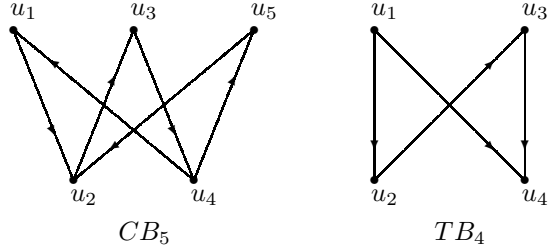
\begin {figure}[h]
\unitlength0.4cm
\begin{center}
\begin{picture}(19,8.5)
\put(1,7){\circle*{.3}}
\put(0.9,7.5){$u_1$}
\put(3,2){\circle*{.3}}
\put(2.9,1.3){$u_2$}
\put(5,7){\circle*{.3}}
\put(4.9,7.5){$u_3$}
\put(7,2){\circle*{.3}}
\put(6.9,1.3){$u_4$}
\put(9,7){\circle*{.3}}
\put(8.9,7.5){$u_5$}
\qbezier(1,7)(2,4.5)(3,2)
\put(2.4,3.5){\vector(1,-2){.01}}
\qbezier(3,2)(4,4.5)(5,7)
\put(4.4,5.5){\vector(1,2){.01}}
\qbezier(5,7)(6,4.5)(7,2)
\put(6.4,3.5){\vector(1,-2){.01}}
\qbezier(7,2)(8,4.5)(9,7)
\put(8.4,5.5){\vector(1,2){.01}}
\qbezier(9,7)(6,4.5)(3,2)
\put(2.18,6){\vector(-3,2){.01}}
\qbezier(7,2)(4,4.5)(1,7)
\put(4.18,3){\vector(-3,-2){.01}}

\put(4.5,0){$CB_5$}%\qbezier(16,0)(20.75,2.5)(24.5,5)%$x_2, Y''_{\omega\beta}%
%\put(21.35,3){\vector(3,2){.01}}

\put(13,7){\circle*{.3}}
\put(12.9,7.5){$u_1$}
\put(13,2){\circle*{.3}}
\put(12.9,1.3){$u_2$}
\put(18,7){\circle*{.3}}
\put(17.9,7.5){$u_3$}
\put(18,2){\circle*{.3}}
\put(17.9,1.3){$u_4$}
\qbezier(13,7)(13,4.5)(13,2)
\put(13,3){\vector(0,-1){.01}}
\qbezier(13,2)(15.5,4.5)(18,7)
\put(17,6){\vector(1,1){.01}}
\qbezier(18,7)(18,4.5)(18,2)
\put(18,3){\vector(0,-1){.01}}
\qbezier(13,7)(15.5,4.5)(18,2)
\put(17,3){\vector(1,-1){.01}}

\put(15,0){$TB_4$}
\end{picture}
\caption{Two bipartite tournaments $CB_5$ and $TB_4$.}
\end{center}
\end{figure}
\begin{lemma}\label{z3.4}
Let $D=(X,Y)$ be an $m$-arc-coloured bipartite tournament with $\min\{|X|,|Y|\}\geqslant3$. If all $4$-cycles, $6$-cycles and induced subdigraphs $CB_5$ in $D$ are rainbow, and all induced subdigraphs $TB_4$ in $D$ are properly coloured, then for any pair of distinct vertices $u,v\in V(D)$ satisfying there exists a rainbow $(u,v)$-path and no rainbow $(v,u)$-path in $D$, at least one of the following conditions holds:

($a$) $u\rightarrow v$;

($b$) there exists a $(u,v)$-path of length $2$.
\end{lemma}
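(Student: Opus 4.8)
The plan is to split on whether $u$ and $v$ lie in the same part of the bipartition and to dispose of the ``different parts'' case immediately. If $u$ and $v$ lie in different parts of $D$ then they are adjacent, so either $u\to v$ or $v\to u$. In the latter case the single arc $(v,u)$ is itself a rainbow $(v,u)$-path, contradicting the hypothesis; hence $u\to v$ and conclusion $(a)$ holds. Note that in this case every $(u,v)$-path has odd length by Lemma \ref{z3.3}$(a)$, so a $(u,v)$-path of length $2$ cannot exist and $(a)$ is the only possible conclusion, as it should be.

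The real content is the ``same part'' case, where $u$ and $v$ are non-adjacent, every $(u,v)$-path has even length, $(a)$ is impossible, and I must establish $(b)$, a $(u,v)$-path of length $2$. I would argue by contradiction: assume no $(u,v)$-path of length $2$ exists, fix a shortest rainbow $(u,v)$-path $P=(x_0=u,x_1,\dots,x_n=v)$, and observe that $n$ is even with $n\ge 4$ (if $n=2$ then $P$ itself is the desired length-$2$ path). The first step is to read off arc directions at the ends of $P$ from the failure of $(b)$: since $u\to x_1$ and $x_1$ is adjacent to $v$, an arc $x_1\to v$ would give the length-$2$ path $(u,x_1,v)$, so $v\to x_1$; dually $x_{n-1}\to u$. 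More generally, every odd-index vertex $x_i$ is adjacent to both $u$ and $v$ by Lemma \ref{z3.3}$(a)$, and the absence of a length-$2$ $(u,v)$-path forces $x_i\to u$ or $v\to x_i$. In particular, if some interior odd vertex $x_i$ satisfies \emph{both} $v\to x_i$ and $x_i\to u$, then $(v,x_i,u)$ is a $(v,u)$-path of length $2$, which is rainbow as soon as its two arcs differ in colour.

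The target now is simply to exhibit \emph{any} rainbow $(v,u)$-path, since this already contradicts the hypothesis; there is no need to reverse $P$ in full. The central structural tool is that a \emph{backward} $3$-chord $x_{i+3}\to x_i$ closes the rainbow $4$-cycle $(x_i,x_{i+1},x_{i+2},x_{i+3},x_i)$, and two consecutive backward $3$-chords $x_{i+3}\to x_i$ and $x_{i+4}\to x_{i+1}$ make $\{x_i,\dots,x_{i+4}\}$ induce a $CB_5$, which is rainbow; inside such a block the arcs $x_{i+4}\to x_{i+1}\to x_{i+2}\to x_{i+3}\to x_i$ form a rainbow locally-reversing segment. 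I would use the rainbow $4$-cycles to record that each chord colour $C(x_{i+3},x_i)$ avoids $C(x_i,x_{i+1}),C(x_{i+1},x_{i+2}),C(x_{i+2},x_{i+3})$, and then chain the reversing segments of successive $CB_5$ blocks into a rainbow $(v,u)$-path. A \emph{forward} $3$-chord $x_i\to x_{i+3}$ instead yields an induced $TB_4$ on $\{x_i,x_{i+1},x_{i+2},x_{i+3}\}$, which is properly coloured, and together with the minimality of $P$ and the rainbow $6$-cycle condition (applied to backward $5$-chords, whose closing walk is a cycle by Lemma \ref{z3.3}$(b)$) I would rule such forward chords out or reroute around them.

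The hard part, and where essentially all four colouring hypotheses are spent, is colour control of the connecting chords. Even after the orientations have been pinned down, the short reverse routes (for instance $v\to x_1\to x_2\to\cdots\to x_{n-1}\to u$, or a chord shortcut $v\to x_i\to u$) fail to be rainbow precisely when two of the connecting chords repeat a colour, and one must then switch to an alternative route. I expect the proof to proceed by a finite case analysis on which connecting chords collide in colour, each collision being resolved by invoking a rainbow $4$-cycle, a rainbow $6$-cycle, or a rainbow $CB_5$ (or the properly-coloured $TB_4$) that forces the offending colour off the rerouted path. The main obstacle is organising these cases so that in every configuration at least one rainbow $(v,u)$-path survives; once that is achieved, the contradiction with ``no rainbow $(v,u)$-path'' is immediate and establishes $(b)$.
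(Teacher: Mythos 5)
Your opening reductions are correct and coincide with the paper's: the odd-length (different parts) case gives $(a)$ immediately; in the even case you may assume $v\to x_1$ (otherwise $(u,x_1,v)$ works), and more generally that for every odd $i$ either $x_i\to u$ or $v\to x_i$. But from that point on the proposal is a plan, not a proof: the decisive step --- ``organising these cases so that in every configuration at least one rainbow $(v,u)$-path survives'' --- is exactly what needs to be proved and is left unexecuted. Worse, the structure you propose to execute it with is the wrong one. You focus on $3$-chords \emph{along} $P$ (arcs between $x_i$ and $x_{i+3}$) and on chaining the reversing segments of consecutive rainbow $CB_5$ blocks into one long rainbow $(v,u)$-path. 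The hypotheses only make each individual $4$-cycle, $6$-cycle, $CB_5$ rainbow; they give no control over colour repetitions \emph{between} different blocks, so the concatenation step is precisely the ``colour collision'' problem you defer, and there is no evident way to resolve it with the stated hypotheses.

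The paper's proof avoids all colour bookkeeping by choosing different small configurations: every relevant $CB_5$ or $TB_4$ contains \emph{both} endpoints $u_0=u$ and $u_n=v$, so a single rainbow (or properly coloured) copy already yields a complete rainbow $(v,u)$-path of length at most $4$, contradicting the hypothesis outright --- no chaining needed. Concretely, after reducing to ``$u_n\to u_1$ and, for each odd $i$, $u_i\to u_0$ or $u_n\to u_i$'', the paper proves by induction on odd $i\leqslant n-3$ that $u_0\to u_i$ \emph{and} $u_n\to u_i$: if instead $u_i\to u_0$, then either $u_i\to u_n$ and $D[u_0,u_{i-2},u_{i-1},u_i,u_n]$ is an induced rainbow $CB_5$ (whose internal reverse path $(u_n,u_{i-2},u_{i-1},u_i,u_0)$ is rainbow), or $u_n\to u_i$ and $D[u_n,u_i,u_0,u_{i-2}]$ is a properly coloured $TB_4$ (so $(u_n,u_i,u_0)$ is rainbow) --- note this $TB_4$ is exactly the device that supplies the ``two arcs differ in colour'' guarantee you identified as missing for the path $(v,x_i,u)$. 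Finally, if $u_{n-1}\to u_0$ then $D[u_0,u_{n-3},u_{n-2},u_{n-1},u_n]$ is an induced rainbow $CB_5$, again a contradiction; hence $u_0\to u_{n-1}$ and $(u_0,u_{n-1},u_n)$ is the desired length-$2$ path. To repair your argument you would need to replace the along-the-path $3$-chord analysis with this endpoint-spoke induction (or supply the missing colour-collision analysis in full, which the given hypotheses do not appear to support).
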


\begin{proof}
Let $P=(u=u_0,u_1,u_2,\ldots,u_n=v)$ be the shortest rainbow $(u,v)$-path in $D$. The result holds clearly for $n\leqslant2$. Now assume $n\geqslant3$.

If $n$ is odd, by Lemma \ref{z3.3} ($a$), we have $u_0,u_n$ are adjacent. Since there is no rainbow $(v,u)$-path in $D$, we have $u_0\rightarrow u_n$. The result holds. So we assume that $n$ is even.

Also by Lemma \ref{z3.3} ($a$), we have $u_1,u_n$ are adjacent. If $u_1\rightarrow u_n$, then $(u=u_0,u_1,u_n=v)$ is a $(u,v)$-path of length $2$ and the result holds. So we assume $u_n\rightarrow u_1$.

If $n=4$, then $P=(u=u_0,u_1,u_2,u_3,u_4=v)$. For $u_3\rightarrow u_0$, we see that $D[u_0,u_1,u_2,u_3,u_4]$ is an induced rainbow $CB_5$, which implies $(u_4,u_1,u_2,$ $u_3,u_0)$ is a rainbow $(v,u)$-path, a contradiction. For $u_0\rightarrow u_3$, we have $(u_0, u_3, u_4)$ is a $(u,v)$-path of length $2$. So we assume $n\geqslant6$.

If $u_0\rightarrow u_{i_0}\rightarrow u_n$ for some $i_0\in \{3,5,\ldots, n-1\}$, then $(u=u_0,u_{i_0},u_n=v)$ is a $(u,v)$-path of length $2$ and the result holds. So we assume that either $u_i\rightarrow u_{0}$ or $u_{n}\rightarrow u_i$ for each $i\in \{3,5,\ldots, n-1\}$.

\vskip 2mm
\noindent {\bf Claim 1.} For each $i\in \{3,5,\ldots, n-3\}$, $u_0\rightarrow u_{i}$ and $u_{n}\rightarrow u_i$.

\begin{proof}It is sufficient to show that $u_0\rightarrow u_i$. We process by induction on $i$. For $i=3$, suppose to the contrary that $u_3\rightarrow u_{0}$. If $u_3\rightarrow u_{n}$, then $D[u_0,u_1,u_2,u_3,u_n]$ is an induced rainbow $CB_5$. It follows that $(u_n,u_1,u_2,u_3,u_0)$ is a rainbow $(v,u)$-path in $D$, a contradiction. If $u_n\rightarrow u_{3}$, then $D[u_n,u_3,u_0,u_1]$ is $TB_4$ which is properly coloured. It follows that $(u_n,u_3,u_0)$ is a rainbow $(v,u)$-path in $D$, a contradiction. Thus $(u_0,u_{3})\in A(D)$.

Assume that the claim holds for $i<n-3$. We consider the case $i=n-3$.

Suppose to the contrary that $u_{n-3}\rightarrow u_{0}$. By the induction hypothesis, we have $u_0\rightarrow u_{n-5}$ and $u_{n}\rightarrow u_{n-5}$. If $u_{n-3}\rightarrow u_{n}$, then $D[u_0,u_{n-5},u_{n-4},u_{n-3},u_n]$ is an induced rainbow $CB_5$. It follows that $(u_n,u_{n-5},u_{n-4},u_{n-3},u_0)$ is a rainbow $(v,u)$-path in $D$, a contradiction. If $u_n\rightarrow u_{n-3}$, then $D[u_n,u_{n-3},u_0,u_{n-5}]$ is $TB_4$ which is properly coloured. It follows that $(u_n,u_{n-3},u_0)$ is a rainbow $(v,u)$-path in $D$, a contradiction. So $(u_0,u_{n-3})\in A(D)$.\end{proof}

Now we show $u_0\rightarrow u_{n-1}$. Suppose to the contrary that $u_{n-1}\rightarrow u_{0}$. By Claim 1, we have $u_0\rightarrow u_{n-3}$ and $u_{n}\rightarrow u_{n-3}$. Then $D[u_0,u_{n-3},u_{n-2},u_{n-1},u_n]$ is an induced rainbow $CB_5$. It follows that $(u_n,u_{n-3},u_{n-2},u_{n-1},u_0)$ is a rainbow $(v,u)$-path in $D$, a contradiction. So $u_0\rightarrow u_{n-1}$.

Now $(u_0,u_{n-1},u_n)$ is a $(u,v)$-path of length $2$.
\end{proof}

\begin{theorem}\label{z3.5}
Let $D=(X,Y)$ be an $m$-arc-coloured bipartite tournament with $\min\{|X|,|Y|\}\geqslant3$. If all $4$-cycles, $6$-cycles and induced subdigraphs $CB_5$ in $D$ are rainbow, and all induced subdigraphs $TB_4$ in $D$ are properly coloured, then $C_r(D)$ is a KP-digraph.
\end{theorem}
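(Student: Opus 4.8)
The plan is to verify, for $C_r(D)$, the hypothesis of Theorem \ref{z1.2}: I will show that every cycle of $C_r(D)$ has a symmetrical arc, which forces $C_r(D)$ to be a KP-digraph. Suppose not. Then, exactly as in the proof of Theorem \ref{z4.3}, $C_r(D)$ contains a cycle with no symmetrical arc; let $C=(u_1,u_2,\ldots,u_\ell,u_1)$ be a shortest one. The goal is to exhibit a symmetrical arc of $C$, a contradiction, with Lemma \ref{z3.4} playing the role that Lemma \ref{z4.2} played for quasi-transitive digraphs.

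First I would record what $C$ tells us locally. For each $i$, the facts $(u_i,u_{i+1})\in A(C_r(D))$ and $(u_{i+1},u_i)\notin A(C_r(D))$ say precisely that $D$ has a rainbow $(u_i,u_{i+1})$-path but no rainbow $(u_{i+1},u_i)$-path, so Lemma \ref{z3.4} gives either $u_i\rightarrow u_{i+1}\in A(D)$ or a $(u_i,u_{i+1})$-path of length $2$ in $D$. Since in a bipartite tournament an arc joins the two parts while a path of length $2$ stays inside one part, the second alternative forces $u_i$ and $u_{i+1}$ into the same part. The first task is therefore to exclude this second alternative for a shortest counterexample, i.e. to prove $C\subseteq D$; granting this, bipartiteness makes $\ell$ even and Lemma \ref{z3.3}(a) fixes the chord pattern of $C$ by parity (say $u_i\in X$ for $i$ odd and $u_i\in Y$ for $i$ even, with $u_i,u_j$ adjacent iff $i\not\equiv j\pmod 2$).

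Granting $C\subseteq D$, I would finish with a single inductive sweep that uses only the rainbowness of $4$-cycles. Starting from the cycle arc $u_1\rightarrow u_2$, I would show by induction on $k=2,3,\ldots,\ell/2$ that, unless a symmetrical arc of $C$ has already been produced, $u_1\rightarrow u_{2k}\in A(D)$. For the step, suppose $u_1\rightarrow u_{2k-2}$ is known; the four vertices $u_1,u_{2k-2},u_{2k-1},u_{2k}$ carry the arcs $u_1\rightarrow u_{2k-2}$, $u_{2k-2}\rightarrow u_{2k-1}$, $u_{2k-1}\rightarrow u_{2k}$ (the middle two being cycle arcs), and if $u_{2k}\rightarrow u_1$ these close a $4$-cycle, which is rainbow. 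Its sub-path $u_{2k-1}\rightarrow u_{2k}\rightarrow u_1\rightarrow u_{2k-2}$ is then a rainbow $(u_{2k-1},u_{2k-2})$-path, so $(u_{2k-1},u_{2k-2})\in A(C_r(D))$ reverses the cycle arc $(u_{2k-2},u_{2k-1})$ — a symmetrical arc, contradiction. Otherwise $u_1\rightarrow u_{2k}$ and the sweep continues. At $k=\ell/2$ the arc $u_\ell\rightarrow u_1$ is the cycle arc itself, so the closing $4$-cycle $u_1u_{\ell-2}u_{\ell-1}u_\ell u_1$ is forced and the contradiction is unavoidable.

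The main obstacle is the reduction $C\subseteq D$, that is, excluding the length-$2$ alternative of Lemma \ref{z3.4} for a shortest bad cycle; this is exactly where the remaining hypotheses are consumed. The $6$-cycle, induced-$CB_5$ (rainbow) and induced-$TB_4$ (properly coloured) conditions feed Lemma \ref{z3.4}, and minimality of $C$ then has to be leveraged to turn a same-part consecutive pair $u_i,u_{i+1}$ carrying a detour $u_i\rightarrow w\rightarrow u_{i+1}$ either into a shorter bad cycle (via a $D$-arc $u_{i-1}\rightarrow u_{i+1}$ that skips $u_i$) or into a rainbow $4$-cycle on $\{u_{i+1},u_{i-1},u_i,w\}$ that already reverses the cycle arc $(u_{i-1},u_i)$. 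Threading the bipartite parity bookkeeping of Lemma \ref{z3.3} through these surgeries, and disposing of the residual case in which the skip-arc $u_{i-1}\rightarrow u_{i+1}$ is itself symmetrical in $C_r(D)$, is the delicate heart of the argument; by contrast the $4$-cycle sweep above is routine once $C\subseteq D$ is in hand.
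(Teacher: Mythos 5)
Your overall frame (verify the hypothesis of Theorem \ref{z1.2} for $C_r(D)$, and use Lemma \ref{z3.4} to realise each arc of a bad cycle $C$ by a path of length $1$ or $2$ in $D$) matches the paper's, and your ``$4$-cycle sweep'' is sound \emph{provided} $C\subseteq D$. But the proof has a genuine gap exactly where you locate ``the delicate heart of the argument'': the reduction to $C\subseteq D$ is never carried out, and it is not a removable technicality. A cycle of $C_r(D)$ can perfectly well have consecutive vertices $u_i,u_{i+1}$ in the same part of the bipartition (a rainbow $(u_i,u_{i+1})$-path of length $2$ through the other part is all that is needed to create that arc of the closure), and then $(u_i,u_{i+1})\in A(D)$ is impossible, not merely unproven. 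Your proposed surgeries do not close this: if $u_{i-1}$ lies in the same part as $u_i$ and $u_{i+1}$, there is no skip-arc between $u_{i-1}$ and $u_{i+1}$ in $D$ at all; if they are in different parts, the arc may be oriented as $u_{i+1}\rightarrow u_{i-1}$, which yields neither a shorter cycle nor a reversal of a cycle arc; and even when $u_{i-1}\rightarrow u_{i+1}\in A(D)$, the shortened cycle is only a counterexample to minimality if $(u_{i+1},u_{i-1})\notin A(C_r(D))$ --- the residual case you explicitly leave open. So the central step of your argument is missing.

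The paper sidesteps the reduction entirely. It keeps $C=(x_0,\dots,x_n,x_0)$ as it is (no minimality is needed), replaces each arc by the corresponding $D$-path $P_i$ of length $1$ or $2$ supplied by Lemma \ref{z3.4}, and concatenates them into a closed walk $C'=(v_0,\dots,v_k,v_0)$ in $D$. For $n=2$ this walk has length at most $6$, so by Lemma \ref{z3.3}($b$) it is a rainbow $4$- or $6$-cycle and immediately reverses an arc of $C$. For $n\geqslant3$ the paper analyses the chords joining $v_0$ to the odd-indexed vertices of $C'$, extracts a rainbow $4$-cycle (or, when the relevant intermediate vertex $v_{2j_0+1}$ is not a vertex of $C$, a rainbow $6$-cycle), and translates the resulting rainbow path back into a symmetrical arc of $C$ via the correspondence between $V(C)$ and $V(C')$. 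If you want to salvage your plan, the inductive sweep has to be run on the walk $C'$ rather than on $C$ --- which is essentially what the paper does --- because $C\subseteq D$ cannot be forced.
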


\begin{proof}
According to Theorem \ref{z1.2}, it is sufficient to prove that each cycle of $C_r(D)$ has a symmetrical arc. Suppose to the contrary that there exists a cycle $C$ in $C_r(D)$ containing no symmetrical arc. We will get a contradiction by showing that $C$ has a symmetrical arc. Let $C=(x_0,x_1,\ldots,x_n,x_0)$. Since $C$ has no symmetrical arc, for each $i\in\{0,1,\ldots,n\}$, there exists a rainbow $(x_i,x_{i+1})$-path and no rainbow $(x_{i+1},x_{i})$-path in $D$. The following claim holds directly from Lemma \ref{z3.4}.

\vskip 2mm
\noindent{\bf Claim 1.} For each $i\in\{0,1,\ldots,n\}$, either $(x_i, x_{i+1})\in A(D)$ or there exists a $(x_i,x_{i+1})$-path of length $2$ in $D$.
\vskip 2mm

Let
\[P_{i}=\begin{cases}
(x_{i},x_{i+1}),  & \text  (x_{i},x_{i+1})\in A(D);\\
(x_{i},u_i,x_{i+1}),  & \text  (x_{i},x_{i+1}) \notin A(D).
\end{cases}\]
and $C'=P_0P_1\ldots P_n$. Then $C'$ is a closed walk in $D$.

We consider the following two cases.

\vskip 2mm
\noindent\emph {Case 1.} $n=2$.
\vskip 2mm

Now $C$ is a $3$-cycle. Then not all arcs of $C$ is in $D$ since $D$ is a bipartite tournament. W.l.o.g., assume that $(x_0,x_1)\notin A(D)$. Then $\ell(P_0)=2$, $\ell(P_1)\leqslant 2$ and $\ell(P_2)\leqslant 2$. Now $C'$ is a closed walk with length at most $6$. By Lemma \ref{z3.3} ($b$), $C'$ is a cycle. Since all $4$-cycles and $6$-cycles are rainbow, we have $C'$ is rainbow. Now $P_1P_2$ is a rainbow $(x_1,x_0)$-path and hence $(x_1,x_0)\in A(C_r(D))$. Note that $(x_0,x_1)\in A(C)$, which contradicts $C$ has no symmetrical arc.

\vskip 2mm
\noindent\emph{Case 2.} $n\geqslant3$.
\vskip 2mm

In this case, we set $C'=(v_0,v_1,\ldots,v_k,v_0)$ where $v_0=x_0$ and $k\geqslant n$. By Lemma \ref{z3.3} ($a$), $k$ is odd since $(v_k,v_0)\in A(D)$. Also $v_0,v_3$ are adjacent and $v_0,v_{k-2}$ are adjacent in $D$.

If $(v_3,v_0)\in A(D)$, then $(v_0,v_1,v_2,v_3,v_0)$ is a rainbow $4$-cycle. This implies that $(v_1,v_2,v_3,v_0)$ is a rainbow $(v_1,v_0)$-path and $(v_2,v_3,v_0)$ is a rainbow $(v_2,v_0)$-path. Then $\{(v_1,v_0),(v_2,v_0)\}\subseteq A(C_r(D))$. Note that either $v_1=x_1$ or $v_2=x_1$. We have $(x_1,x_0)\in A(C_r(D))$. Note that  $(x_0,x_1)\in A(C)$, which contradicts $C$ has no symmetrical arc.

If $(v_0,v_{k-2})\in A(D)$, then $(v_0,v_{k-2},v_{k-1},v_{k},v_0)$ is a rainbow $4$-cycle. This implies that $(v_0,v_{k-2},v_{k-1},v_k)$ is a rainbow $(v_0,v_{k})$-path and $(v_0,v_{k-2}, v_{k-1})$ is a rainbow $(v_0,v_{k-1})$-path. Then $\{(v_0,v_{k}),(v_0,v_{k-1})\}\subseteq  A(C_r(D))$. Note that either $v_k=x_n$ or $v_{k-1}=x_n$. We have $(x_0,x_n)\in A(C_r(D))$. Note that $(x_n,x_0)\in A(C)$, which contradicts $C$ has no symmetrical arc.

If $(v_0,v_3)\in A(D)$ and $(v_{k-2},v_0)\in A(D)$, we have $v_3\ne v_{k-2}$ and hence $k-2\geqslant5$. Also there exists $i\in \{1,2,\ldots,\frac{k-5}{2}\}$ such that $(v_0,v_{2i+1})\in A(D)$ and $(v_{2i+3},v_0)\in A(D)$.
Let
\begin{center}
$j_0=\mbox{max}\{i\in \{1,2,\ldots,\frac{k-5}{2}\}\,|\,(v_0,v_{2i+1})\in A(D)$, $(v_{2i+3},v_0)\in A(D)\}$.
\end{center}
Then $(v_0,v_{2j_0+1},v_{2j_0+2},v_{2j_0+3},v_0)$ is a rainbow $4$-cycle.

If $v_{2j_0+1}\in V(C)$, let $v_{2j_0+1}=x_j$. Now $(v_{2j_0+2},v_{2j_0+3},v_0,v_{2j_0+1})$ is a rainbow $(v_{2j_0+2},v_{2j_0+1})$-path and $(v_{2j_0+3},v_0,v_{2j_0+1})$ is a rainbow $(v_{2j_0+3},v_{2j_0+1})$-path. Then $\{(v_{2j_0+2},v_{2j_0+1}),(v_{2j_0+3},v_{2j_0+1})\}\subseteq A(C_r(D))$. Note that either $v_{2j_0+2}=x_{j+1}$ or $v_{2j_0+3}=x_{j+1}$. We have $(x_{j+1},x_j)\in A(C_r(D))$. Note that $(x_j,x_{j+1})\in A(C)$, which contradicts $C$ has no symmetrical arc.

If $v_{2j_0+1}\notin V(C)$, by the definition of $C'$, we have $v_{2j_0},v_{2j_0+2}\in V(C)$. Let $v_{2j_0+2}=x_{j+1}$. By the choice of $j_0$, we have $(v_{2j_0+5},v_0)\in A(D)$. This implies that $(v_0,v_{2j_0+1},v_{2j_0+2},v_{2j_0+3},v_{2j_0+4},$ $v_{2j_0+5},v_0)$ is a rainbow $6$-cycle. So $(v_{2j_0+3},v_{2j_0+4},v_{2j_0+5},v_0,v_{2j_0+1},$ $v_{2j_0+2})$ is a rainbow $(v_{2j_0+3},v_{2j_0+2})$-path and $(v_{2j_0+4},v_{2j_0+5},v_0,v_{2j_0+1},v_{2j_0+2})$ is a rainbow $(v_{2j_0+4},v_{2j_0+2})$-path. Then $\{(v_{2j_0+3},v_{2j_0+2}),(v_{2j_0+4},v_{2j_0+2})\}\subseteq  A(C_r(D))$. Note that either $v_{2j_0+3}=x_{j+2}$ or $v_{2j_0+4}=x_{j+2}$, we have $(x_{j+2},x_{j+1})\in A(C_r(D))$. Note that $(x_{j+1},x_{j+2})\in A(C)$, which contradicts $C$ has no symmetrical arc.

In any case, we get a contradiction. Thus $C_r(D)$ is a \emph{KP}-digraph. \end{proof}

By Observation \ref{z1.4} and Theorem \ref{z3.5}, the following corollary is direct.

\begin{corollary}Let $D=(X,Y)$ be an $m$-arc-coloured bipartite tournament with $\min\{|X|,|Y|\}\geqslant3$. If all $4$-cycles, $6$-cycles and induced subdigraphs $CB_5$ in $D$ are rainbow, and all induced subdigraphs $TB_4$ in $D$ are properly coloured, then $D$ has a RP-kernel.
\end{corollary}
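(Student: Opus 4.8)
The plan is to obtain this statement as an immediate consequence of the machinery already assembled, namely Theorem \ref{z3.5} together with Observation \ref{z1.4}. First I would note that the hypotheses of this corollary---that $\min\{|X|,|Y|\}\geqslant 3$, that all $4$-cycles, $6$-cycles and induced subdigraphs $CB_5$ are rainbow, and that all induced subdigraphs $TB_4$ are properly coloured---are word-for-word the hypotheses of Theorem \ref{z3.5}. Hence Theorem \ref{z3.5} applies verbatim and yields that $C_r(D)$ is a KP-digraph.

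Next I would unwind what the KP-digraph conclusion buys us. By definition, a KP-digraph is one in which \emph{every} induced subdigraph has a kernel; in particular, taking the trivial induced subdigraph $C_r(D)[V(D)]=C_r(D)$, we conclude that $C_r(D)$ itself possesses a kernel. This passage from ``every induced subdigraph has a kernel'' to ``the whole digraph has a kernel'' is the only logical step that needs to be spelled out, and it is immediate.

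Finally I would invoke the equivalence recorded in Observation \ref{z1.4}: an arc-coloured digraph $D$ has a RP-kernel if and only if its rainbow closure $C_r(D)$ has a kernel. Since we have just exhibited a kernel of $C_r(D)$, the observation delivers a RP-kernel of $D$, completing the argument.

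There is essentially no obstacle to this corollary, which is why it is labelled a direct corollary: all of the genuine combinatorial work has already been discharged earlier---the reachability analysis of Lemma \ref{z3.4}, which forces each arc of a symmetric-arc-free cycle in $C_r(D)$ to be realised in $D$ by an arc or a length-$2$ path, and the cycle argument of Theorem \ref{z3.5}, which uses the rainbow $4$- and $6$-cycles (via Lemma \ref{z3.3}) to produce a symmetrical arc and thereby verify the hypothesis of Theorem \ref{z1.2}. Consequently the proof I would write consists of the single deduction: Theorem \ref{z3.5} gives that $C_r(D)$ is a KP-digraph, so $C_r(D)$ has a kernel, and Observation \ref{z1.4} then gives that $D$ has a RP-kernel.
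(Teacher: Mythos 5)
Your proposal is correct and is exactly the paper's argument: the paper states the corollary is direct from Theorem \ref{z3.5} and Observation \ref{z1.4}, which is precisely the chain you spell out (KP-digraph $\Rightarrow$ $C_r(D)$ has a kernel $\Rightarrow$ $D$ has a RP-kernel). Your explicit remark that the whole digraph is itself an induced subdigraph is a helpful, if routine, clarification of the step the paper leaves implicit.
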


\end{document}